\title[Weyl's law on mm-spaces]{Weyl's law on $RCD^*(K,N)$ metric measure spaces}
\author{Hui-Chun Zhang}
\address{Hui-Chun Zhang: Department of Mathematics, Sun Yat-sen University, No. 135, Xingang Xi Road, Guangzhou, 510275 \\}
\email{zhanghc3@mail.sysu.edu.cn}
\author{Xi-Ping Zhu}
\address{Xi-Ping Zhu: Department of Mathematics, Sun Yat-sen University,  No. 135, Xingang Xi Road, Guangzhou, 510275 \\}
\email{stszxp@mail.sysu.edu.cn}
\newtheorem{thm}{Theorem}[section]
\newtheorem{prop}[thm]{Proposition}
\newtheorem{lem}[thm]{Lemma}
\newtheorem{cor}[thm]{Corollary}
 \theoremstyle{definition}
 \theoremstyle{remark}
\newtheorem{defn}[thm]{Definition}
\newtheorem{rem}[thm]{Remark}
\newtheorem{exam}[thm]{Example}
\numberwithin{equation}{section}
\newcommand{\ls}{\leqslant}
\newcommand{\gs}{\geqslant}
\newcommand{\du}{{\rm d}\mu}
\newcommand{\ip}[2]{\langle{#1},{#2}\rangle}
\begin{document}



\begin{abstract}
In this paper, we will prove the Weyl's law for the asymptotic formula of  Dirichlet eigenvalues on metric measure spaces with generalized Ricci curvature bounded from below.
\end{abstract}

\maketitle

\tableofcontents

\section{Introduction}
One of most fundamental theorems in spectral geometry is the Weyl's law \cite{dav89}, which
states that, on any closed $n$-dimensional Riemannian manifold $(M^n,g)$, we have a leading asymptotic
$$\lim_{\lambda\to\infty}\frac{N(\lambda) }{\lambda^{n/2}}= \frac{\omega_n}{(2\pi)^n}vol_g(M^n),$$
where $\lambda_j$, $1\ls j<\infty$, are the eigenvalues of Laplace-Beltrami operator $\Delta$ on $(M^n,g)$, and $N(\lambda)$ is the spectral counting function
$$N(\lambda) := \#\{\lambda_j \in {\rm Spec}(\Delta),\  \lambda_j \ls \lambda\},$$
and $\omega_n$ is the volume of unit ball in $\mathbb R^n$, and $vol_g(M^n)$ is the volume of $M^n$. If $\Omega\subset M^n$ is a bounded domain in  $(M^n,g)$  with smooth boundary, then the same asymptotic formula holds for  the Dirichlet (or Neumann) eigenvalues, by replacing $vol_g(M^n)$ by $vol_g(\Omega).$

It has a wide range of interests about the extensions of  Weyl's law, (see, for examples, \cite{mor08,mu07,mil16} and a survey \cite{ivr16}). In particular, on a weighted Riemannian manifold with Bakry-Emery Ricci curvature bounded from below, if the density $\mu:=f\cdot vol_g$ is smooth, and is bounded away from $0$ and $\infty$, then it was shown by E. Milman in \cite{mil16} that the classical Weyl's law still holds for weighted Laplacian $\Delta_\mu:=\Delta +\ip{\nabla \ln f}{\nabla}$.

 In this paper, we will extend this classical result to  \emph{non-smooth}  settings.
To formulate our main result, we need to introduce some notations. Let $(X,d,\mu)$ be a metric measure space (a metric space equipped  a Radon measure).  A synthetic notion of lower Ricci bounds on $(X,d,\mu)$ was introduced in the pioneering works of Sturm \cite{stu06-1,stu06-2} and Lott-Villani \cite{lv09,lv07-jfa}. Nowadays,  many important developments were given in this field (see \cite{ags14,ags15,ams16,ags-duke,bs10,eks15,cm16,gig13,mw16,hkx13,jia15} and so on). In particular, to rule out the Finsler spaces, an improvement notion, $RCD(K,\infty)$-condition, was introduced by Ambrosio-Gigli-Savar\'e in \cite[\S 5]{ags-duke}. The finitely dimensional case, $RCD(K,N)$, was given  by  Gigli in  \cite[\S 4.3]{gig13,gig15}, and a splitting theorem for $RCD(0,N)$-space was proved by Gigli \cite{gig13}. The parameters $K$ and $N$   play the role of  ``Ricci curvature $\gs K$ and dimension $\ls N$".
Very recently,  Ambrosio-Gigli-Savar\'e \cite{ags14},  Erbar-Kuwada-Sturm \cite{eks15} and Ambrosio-Mondino-Savar\'e \cite{ams16}  introduced a  Bakry-Emery condition $BE$, which is a weak formulation of  Bochner inequality. They proved in \cite{ams16,eks15} that the condition $BE(K,N)$ is equivalent to the  (reduced) Riemannian curvature-dimension condition  $RCD^*(K,N)$ for constants $K\in\mathbb R$ and $N\gs1$.  In \cite[Theorem 1.1]{cmil16}, Cavalletti-Milman showed that the condition $RCD^*(K,N)$ is equivalent to the condition $RCD(K,N)$ provided  the total measure $\mu(X)<\infty$.

Let $(X,d,\mu)$ be a metric measure space satisfying $RCD^*(K,N)$  for some $K\in \mathbb R$ and $N\in[1,\infty)$. For any bounded domain $\Omega\subset X$, according to \cite{che99,shan00,ags14}, the Sobolev spaces $W^{1,p}(\Omega)$, $1\ls p\ls\infty$, are well defined. Moreover,   the  space $W^{1,2}(\Omega)$ is a Hilbert space (\cite{ags14,gig15}).  The Cheeger energy over $\Omega$
$${\rm Ch}(f)=\int_\Omega|\nabla f|^2\du$$
provides a closed quadratic form acting on the Sobolev space  $W_0^{1,2}(\Omega)$, where $|\nabla f|$ is the  weak upper gradient of $f$ (\cite{ags14}). The Dirichlet form $({\rm Ch}, W_0^{1,2}(\Omega))$ is associated with a self-adjoint operator $\Delta_{\Omega}$.
    If ${\rm diam}(\Omega)\ls {\rm diam}(X)/a$ for some $a>1$,  then the Rellich's compactness theorem holds (see \cite{bm06,gms15,hk00}), and hence the operator $(Id-\Delta_{\Omega})^{-1}$ is compact. The classical spectral theorem  implies that  Dirichlet spectrum is discrete,  denoted by
$$0<\lambda_1^{\Omega}\ls \lambda_2^{\Omega}\ls \cdots\ls \lambda_m^{\Omega}\ls \cdots, \quad j\in \mathbb N.$$
Our main result in this paper is the following Weyl asymptotic formula for these  Dirichlet eigenvalues:
\begin{thm}\label{thm1.1}
 Let $ (X,d,\mu)$ be a metric measure space satisfying $RCD^*(K,N)$ for some $K\in\mathbb R$ and some $N\gs1$.  Suppose that  the measure $\mu$ and the $N$-dimensional Hausdorff dimension $\mathscr H^N$ are mutually  absolutely continuous. Namely, $\mu\ll \mathscr H^N\ll\mu$.  Let $\Omega\subset X$ be a bounded domain of $X$
such  that ${\rm diam}(\Omega)\ls {\rm diam}(X)/s$  for some $s>1$.
 Then  $N$ is an integer  and it holds the asymptotic formula:
  \begin{equation}\label{equa1.1}
 \lim_{\lambda\to\infty}\frac{N_\Omega(\lambda) }{\lambda^{N/2}}=  \frac{\omega_N\cdot \mathscr H^N(\Omega)}{(2\pi)^{N}},
 \end{equation}
 where  $N_\Omega(\lambda):=\#\{\lambda_j^\Omega:\ \lambda_j^\Omega\ls \lambda\}.$
 \end{thm}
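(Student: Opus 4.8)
The plan is to establish \eqref{equa1.1} through the classical Tauberian route: control the trace of the Dirichlet heat kernel $\mathrm{Tr}(e^{t\Delta_\Omega}) = \sum_j e^{-t\lambda_j^\Omega}$ as $t\to 0^+$, and then apply Karamata's Tauberian theorem to transfer the heat-trace asymptotics into the eigenvalue-counting asymptotics \eqref{equa1.1}. Concretely, I would aim to prove
\begin{equation}\label{heattrace}
\lim_{t\to 0^+} t^{N/2}\,\mathrm{Tr}(e^{t\Delta_\Omega}) = \frac{\mathscr H^N(\Omega)}{(4\pi)^{N/2}},
\end{equation}
which, via Karamata, is equivalent to \eqref{equa1.1} with $\Gamma(N/2+1)$ appearing and combining with $\omega_N = \pi^{N/2}/\Gamma(N/2+1)$ to give the stated constant. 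So everything reduces to \eqref{heattrace}, i.e.\ to an on-diagonal short-time expansion of the heat kernel $p_\Omega(t,x,x)$ together with a dominated-convergence argument to integrate it over $\Omega$.

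The first main step is therefore the pointwise asymptotics: for $\mu$-a.e.\ $x\in\Omega$,
\begin{equation}\label{pointwise}
\lim_{t\to 0^+} (4\pi t)^{N/2}\, p_\Omega(t,x,x) = \frac{d\mathscr H^N}{d\mu}(x).
\end{equation}
The idea here is that, under $RCD^*(K,N)$ with $\mu\ll\mathscr H^N\ll\mu$, the measured-Gromov--Hausdorff tangent cone at $\mu$-a.e.\ point is Euclidean $\mathbb R^k$ with $k$ an integer (the structure theory of Mondino--Naber and the constancy of dimension result of Bru\`e--Semola), and the almost-everywhere mutual absolute continuity with $\mathscr H^N$ forces $k=N$, so $N$ must be an integer — this is exactly the source of the integrality claim. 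At such a regular point the heat kernel converges, under blow-up, to the Gaussian heat kernel on $\mathbb R^N$; this is the content of heat-kernel convergence under measured-Gromov--Hausdorff convergence of $RCD^*$ spaces (Gigli--Mondino--Savar\'e, Ambrosio--Honda). Rescaling $p_\Omega(t,x,x)$ and using that the Dirichlet heat kernel of $\Omega$ agrees with the global one up to exponentially small terms for $t$ small (since $x$ is interior and the heat kernel has Gaussian off-diagonal decay, a consequence of the local Poincar\'e inequality and volume doubling that $RCD^*(K,N)$ guarantees) yields \eqref{pointwise}, with the density factor $\tfrac{d\mathscr H^N}{d\mu}$ entering through the normalization of the limiting measure.

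The second step is to justify passing to the limit under the integral $\mathrm{Tr}(e^{t\Delta_\Omega}) = \int_\Omega p_\Omega(t,x,x)\,d\mu(x)$. For this I need a uniform upper bound of the form $p_\Omega(t,x,x) \le C\, t^{-N/2}\, \tfrac{d\mathscr H^N}{d\mu}(x)$ (or at least an $L^1(\Omega,\mu)$-dominating function independent of small $t$), which follows from the standard Li--Yau / Gaussian on-diagonal upper heat-kernel estimate available on $RCD^*(K,N)$ spaces together with the Bishop--Gromov volume comparison $\mu(B_r(x))\gtrsim r^N \tfrac{d\mathscr H^N}{d\mu}(x)$ valid at density points; integrability of the dominating function over the bounded set $\Omega$ then comes from $\mathscr H^N(\Omega)<\infty$. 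Dominated convergence applied to \eqref{pointwise} gives \eqref{heattrace}.

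The step I expect to be the genuine obstacle is \eqref{pointwise} — specifically, upgrading the \emph{qualitative} measured-Gromov--Hausdorff convergence of rescaled spaces to a \emph{quantitative, locally uniform} convergence of the heat kernels that is strong enough to extract the precise constant $(4\pi)^{-N/2}$, and handling the interplay between the density $\tfrac{d\mathscr H^N}{d\mu}$ and the normalization of the limit measure in the blow-up (one must be careful that the natural limit measure is $\tfrac{d\mathscr H^N}{d\mu}(x)\cdot\mathscr H^N$ on the tangent $\mathbb R^N$, not $\mathscr H^N$ itself). The off-diagonal Gaussian bounds needed to localize to $\Omega$ and to dominate the trace integrand are available in the literature (Jiang--Li--Zhang, Sturm), so the technical core is really the heat-kernel blow-up analysis at regular points. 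Once \eqref{pointwise} and the $L^1$-domination are in hand, the Tauberian conclusion \eqref{equa1.1} is routine.
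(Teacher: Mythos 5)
Your overall strategy --- heat trace via Karamata, pointwise blow-up of the Dirichlet heat kernel to the Gaussian on the Euclidean tangent cone, and a domination argument to integrate over $\Omega$ --- is exactly the route the paper takes (Lemma~\ref{lem4.4}, Theorem~\ref{thm4.6}, and the closing proof of Theorem~\ref{thm1.1}). Your identification of the heat-kernel blow-up, i.e.\ upgrading $pmGH$ convergence to locally uniform convergence of heat kernels with the correct normalization of the tangent measure, as the genuinely hard step also matches: the paper devotes Section~3 (local spectral convergence, Theorem~\ref{thm3.8}, plus the exponential tail estimate of Lemma~\ref{lem3.9}) to precisely this. Your route to the integrality of $N$ via Bru\`e--Semola constancy of dimension is stronger than what the paper needs; the paper only invokes that each $\mathcal R_k$ is $k$-rectifiable (Mondino--Naber), hence $\mathscr H^N$-null for $k<N$, so $\mu\ll\mathscr H^N$ already kills all strata below $N$. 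Both work.

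One step is stated incorrectly. For the domination you assert the Bishop--Gromov bound $\mu(B_r(x))\gtrsim r^N\,\tfrac{d\mathscr H^N}{d\mu}(x)$. Writing $\tfrac{d\mathscr H^N}{d\mu}=1/\theta_N$ with $\theta_N(x)=\lim_{r\to 0}\mu(B_r(x))/(\omega_N r^N)$, this would say $\mu(B_r(x))\gtrsim r^N/\theta_N(x)$. But Bishop--Gromov monotonicity makes $\theta_N$ the \emph{supremum} of the normalized volume ratio over small $r$, so for any fixed $r>0$ one has the opposite inequality $\mu(B_r(x))\lesssim r^N\,\theta_N(x)$, and there is no lower bound for $\mu(B_r(x))$ in terms of $r^N/\theta_N(x)$; the density $\theta_N$ can be uniformly small while $\mu$ is fixed. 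Fortunately the correct domination is simpler than what you wrote: doubling, via (\ref{equa2.4}), gives $\mu(B_{\sqrt t}(x))\gs c_{N,K,R}\,t^{N/2}\,\mu(B_R(x))/R^N$ uniformly for $x\in\Omega$ and $0<t<R^2$, and once $R\gs{\rm diam}(\Omega)$ one has $\mu(B_R(x))\gs\mu(\Omega)>0$ for every $x\in\Omega$; combined with the on-diagonal heat-kernel upper bound (\ref{equa2.5}) this yields a $t$-independent dominating function $g(x)=C/\mu(B_R(x))$ which is in fact bounded on $\Omega$, hence trivially in $L^1(\Omega,\mu)$. The paper packages essentially this observation as the ``$N$-noncollapsing'' condition of Definition~\ref{defn4.5} together with Example~1, and then argues via Egorov and Fatou rather than dominated convergence per se; in the setting of Theorem~\ref{thm1.1} (where $k_{\rm max}=N$) the two mechanisms are interchangeable. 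With that correction, and with the pointwise constant understood as $1/\theta_N$ as in Lemma~\ref{lem4.4}, your argument is sound and parallels the paper's.
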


Remark that the RHS of (\ref{equa1.1}) does  not depend on the measure $\mu$. Theorem \ref{thm1.1} is a consequence of Theorem \ref{thm4.6},   a more general result on $RCD^*$-spaces.
 In the case of a smooth Riemannian manifold $(M,g)$ of $n$-dimension with the Riemannian volume $\mu:=vol_g$, the relation (\ref{equa1.1}) recovers the classical Weyl's law.

Let us look at the case of an $n (\gs2)$-dimensional Alexandrov space $(X,d)$ with the Hausdorff measure $\mathscr H^n$, and with curvature $\gs k$ for some $k\in\mathbb R$. It was proved \cite{pet11,zz10} that $(X,d,\mathscr H^n)$ satisfies $RCD^*((n-1)k,n)$.
 From Theorem \ref{thm1.1}, we have the following consequence.
\begin{cor}\label{cor1.2}
Let $\Omega$ be a bounded domain in an $n$-dimensional Alexandrov space $(X,d,\mathscr H^n)$. Then we have the Weyl's law
 \begin{equation}\label{equa1.2}
 \lim_{\lambda\to\infty}\frac{N_\Omega(\lambda) }{\lambda^{n/2}}=  \frac{\omega_n\cdot \mathscr H^n(\Omega)}{(2\pi)^{n}}.
 \end{equation}
\end{cor}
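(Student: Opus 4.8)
The plan is to deduce Corollary \ref{cor1.2} directly from Theorem \ref{thm1.1} by verifying that its hypotheses are met in the Alexandrov setting. Recall that for an $n$-dimensional Alexandrov space $(X,d)$ with curvature $\gs k$, the results of Petrunin \cite{pet11} and Zhang-Zhu \cite{zz10} show that the metric measure space $(X,d,\mathscr H^n)$ satisfies $RCD^*((n-1)k,n)$; this is exactly the $RCD^*(K,N)$ condition with $K=(n-1)k\in\mathbb R$ and $N=n\gs1$. Thus the first hypothesis of Theorem \ref{thm1.1} holds.

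Next I would address the absolute continuity condition $\mu\ll\mathscr H^N\ll\mu$. Here the measure $\mu$ is taken to be $\mathscr H^n$ itself, and $N=n$, so the requirement $\mathscr H^n\ll\mathscr H^n\ll\mathscr H^n$ is trivially satisfied. (It is worth noting in passing that the $n$-dimensional Hausdorff measure of an $n$-dimensional Alexandrov space is locally finite and positive on open sets, so it is a genuine Radon measure and the metric measure structure is well posed.) The remaining hypothesis is the diameter condition: we must assume $\mathrm{diam}(\Omega)\ls\mathrm{diam}(X)/s$ for some $s>1$, which we simply carry over as a hypothesis on $\Omega$ exactly as in the statement of the corollary. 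With all hypotheses of Theorem \ref{thm1.1} verified, the conclusion is immediate: $N$ is an integer — which is automatic here since $N=n$ — and
\begin{equation*}
\lim_{\lambda\to\infty}\frac{N_\Omega(\lambda)}{\lambda^{n/2}}=\frac{\omega_n\cdot\mathscr H^n(\Omega)}{(2\pi)^n},
\end{equation*}
which is precisely \eqref{equa1.2}.

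Since this corollary is a pure specialization, there is essentially no obstacle in its proof; the only points requiring a word of care are (i) correctly matching the parameters, namely $K=(n-1)k$ and $N=n$, so that the curvature normalization from \cite{pet11,zz10} lines up with the $RCD^*$ convention used here, and (ii) observing that with $\mu=\mathscr H^n$ the mutual absolute continuity hypothesis degenerates to a tautology. All the analytic content — the discreteness of the Dirichlet spectrum via Rellich compactness, and the asymptotics of the counting function — is already contained in Theorem \ref{thm1.1} (itself a consequence of the more general Theorem \ref{thm4.6}), so no further work is needed beyond citing it.
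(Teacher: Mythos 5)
Your proof is the same argument the paper uses: specialize Theorem~\ref{thm1.1} to $\mu=\mathscr H^n$ on an $n$-dimensional Alexandrov space, invoking \cite{pet11,zz10} to place $(X,d,\mathscr H^n)$ in the class $RCD^*((n-1)k,n)$ and observing that $\mu\ll\mathscr H^N\ll\mu$ is automatic when $\mu=\mathscr H^n$ and $N=n$. The parameter bookkeeping ($K=(n-1)k$, $N=n$) and the remark that integrality of $N$ is immediate are all correct, and the rectifiability/decomposition analysis that would otherwise be needed is already absorbed into Theorem~\ref{thm4.6} via Theorem~\ref{thm1.1}.

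One small slip: you assert that the diameter hypothesis ${\rm diam}(\Omega)\ls {\rm diam}(X)/s$ for some $s>1$ appears ``exactly as in the statement of the corollary,'' but the corollary as printed does not carry that hypothesis. When $X$ is noncompact this causes no trouble since ${\rm diam}(X)=\infty$ and any bounded $\Omega$ satisfies the condition vacuously; when $X$ is compact, however, a bounded domain can have ${\rm diam}(\Omega)={\rm diam}(X)$, and then Theorem~\ref{thm1.1} as stated is not directly applicable (its hypotheses ensure Rellich compactness and hence discreteness of the Dirichlet spectrum). So the condition is a genuine side hypothesis that should either be recorded in the corollary or justified separately; your proof strategy is sound once it is added, and this omission is really a gap in the paper's own statement rather than in your reasoning.
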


Another consequence is that the Weyl's law also holds for noncollapsing limit spaces in the sense of Cheeger-Colding. More precisely, if $(X,d,\mu)$ is a measured Gromov-Hausdorff limit space of a sequence of pointed Riemannian manifolds $(M_j,g_j,p_j)$ with
$$Ric_{M_j}\gs K,\quad \ \dim(M_j)=n,\quad \ vol_{g_j}(B_1(p_j))\gs v_0>0,$$
then the Weyl's law (\ref{equa1.2}) still holds. This case has been already proved by Ding in \cite{ding02}.

Recalling that in the  proof of the Weyl's law on smooth setting, a key ingredient is   a uniformly small time asymptotic behaviour of heat trace $H(t,x,x)$ via the parametrix of heat kernels. However, the construction of the parametrix on smooth manifolds does not work on  \emph{singular} metric measure spaces. To deal with this lack of the parametrix, we shall get the small time asymptotic behavior via  the (locally) uniform convergence of Dirichlet heat kernels living on a converging sequence of metric measure spaces in the sense of  pointed measured Gromov-Hausdorff topology,  as in \cite{ding02,xu14,gms15}.

As a byproduct, we  show a local spectral convergence on $RCD^*(K,N)$-spaces, which is of independent interesting (See Theorem \ref{thm3.8}).
\begin{prop}\label{prop1.3}
  Let pointed metric measure spaces  $(X_j,d_j,\mu_j,p_j)_{j\in\mathbb N}$ converge to $(X_\infty,d_\infty,\mu_\infty,p_\infty)$ in the sense of pointed measured Gromov-Hausdorff.  Suppose that all $(X_j,d_j,\mu_j)$ satisfy $RCD^*(K,N)$ for some $K\in \mathbb R$ and some $N\gs1$.   Let  $R>0$ with  $R\in(0,{\rm diam}(X_j)/a)$ for some $a>2$, $\forall j\in\mathbb N$. Assume that $\partial B_R(p_\infty)=\partial \big(X_\infty\backslash \overline{B_R(p_\infty)}\big)$.\footnote{We remark that this assumption can be replaced by ${\rm Cap}_2\big(\partial B_R(p_\infty)\backslash \partial \big(X_\infty\backslash \overline{B_R(p_\infty)}\big)\big)=0.$ }

For each $j\in\mathbb N$,  we denote by $\lambda_{m,j}^{(R)}$  the $m-$th Dirichlet eigenvalues of $\Delta_{B_R(p_j)}$ on ball $B_R(p_j)$.
Then we have that the spectral convergence
$$\lim_{j\to\infty}\lambda_{m,j}^{(R)}=\lambda^{(R)}_{m,\infty}.$$
\end{prop}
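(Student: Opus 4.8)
The plan is to establish the two one-sided bounds $\limsup_{j\to\infty}\lambda_{m,j}^{(R)}\ls\lambda_{m,\infty}^{(R)}$ and $\liminf_{j\to\infty}\lambda_{m,j}^{(R)}\gs\lambda_{m,\infty}^{(R)}$ via the min--max principle for Dirichlet eigenvalues, using as analytic engine the Mosco-type convergence of the Cheeger energies $\mathrm{Ch}^{B_R(p_j)}$ on $W_0^{1,2}(B_R(p_j))$ along the pmGH-convergent sequence. A preliminary observation is that, since $R<\mathrm{diam}(X_j)/a$ with $a>2$ and each $(X_j,d_j,\mu_j)$ satisfies $RCD^*(K,N)$, the Bishop--Gromov inequality furnishes a uniform-in-$j$ doubling constant and a uniform $(1,2)$-Poincar\'e inequality on the balls $B_R(p_j)$; consequently the Rellich--Kondrachov theorem holds uniformly, so any sequence $f_j\in W_0^{1,2}(B_R(p_j))$ with $\sup_j\|f_j\|_{W^{1,2}}<\infty$ admits a subsequence converging in the $L^2$-sense (along the GH approximations) to some $f_\infty\in L^2(B_R(p_\infty))$, with lower semicontinuity $\int_{B_R(p_\infty)}|\nabla f_\infty|^2\du_\infty\ls\liminf_j\int_{B_R(p_j)}|\nabla f_j|^2\du_j$.

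For the upper bound, take Dirichlet eigenfunctions $\varphi_1,\dots,\varphi_m$ on $B_R(p_\infty)$ realizing $\lambda^{(R)}_{1,\infty},\dots,\lambda^{(R)}_{m,\infty}$, and approximate each in $W^{1,2}$-norm by a Lipschitz function $\psi_i$ with $\mathrm{supp}\,\psi_i$ compactly contained in the open ball $B_R(p_\infty)$. Because these supports lie at a positive distance from $\partial B_R(p_\infty)$, the pmGH approximation transplants the $\psi_i$ to Lipschitz functions $\psi_i^{(j)}$ supported in $B_R(p_j)$ for all large $j$, with $\|\psi_i^{(j)}\|_{L^2(\mu_j)}\to\|\psi_i\|_{L^2(\mu_\infty)}$ and convergence of the Dirichlet energies (the energy convergence of fixed Lipschitz functions under pmGH convergence of $RCD^*$-spaces is standard). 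For $j$ large the Gram matrix of $\{\psi_i^{(j)}\}$ and the form $\mathrm{Ch}^{B_R(p_j)}$ restricted to their span are arbitrarily close to those of $\{\psi_i\}$ on $B_R(p_\infty)$, so the min--max principle yields $\lambda^{(R)}_{m,j}\ls\lambda^{(R)}_{m,\infty}+o(1)$; in particular $\sup_j\lambda^{(R)}_{m,j}<\infty$.

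For the lower bound it is enough, by this boundedness, to show that if $\lambda^{(R)}_{i,j}\to\ell_i$ for all $i\ls m$ along a subsequence then $\ell_m\gs\lambda^{(R)}_{m,\infty}$. Along such a subsequence let $u_{1,j},\dots,u_{m,j}$ be an $L^2(\mu_j)$-orthonormal system of Dirichlet eigenfunctions with eigenvalues $\lambda^{(R)}_{1,j}\ls\cdots\ls\lambda^{(R)}_{m,j}$; since $\int_{B_R(p_j)}|\nabla u_{i,j}|^2\du_j=\lambda^{(R)}_{i,j}$ is bounded, the $u_{i,j}$ are uniformly bounded in $W^{1,2}$, and by the uniform Rellich compactness we may assume $u_{i,j}\to u_{i,\infty}$ strongly in $L^2$ for every $i\ls m$. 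Strong $L^2$-convergence preserves $L^2$-orthonormality of $\{u_{i,\infty}\}$; lower semicontinuity of the Cheeger energy and passage to the limit in the weak eigenvalue equation $\int\ip{\nabla u_{i,j}}{\nabla v}\du_j=\lambda^{(R)}_{i,j}\int u_{i,j}v\,\du_j$ show that each $u_{i,\infty}$ is a Dirichlet eigenfunction on $B_R(p_\infty)$ with eigenvalue $\ell_i$, \emph{provided} we know $u_{i,\infty}\in W_0^{1,2}(B_R(p_\infty))$. Granting this, applying the min--max principle on $B_R(p_\infty)$ to $\mathrm{span}\{u_{1,\infty},\dots,u_{m,\infty}\}$ gives $\lambda^{(R)}_{m,\infty}\ls\ell_m$, and combining with the upper bound yields $\lim_{j\to\infty}\lambda^{(R)}_{m,j}=\lambda^{(R)}_{m,\infty}$.

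The main obstacle is precisely the membership $u_{i,\infty}\in W_0^{1,2}(B_R(p_\infty))$: the $L^2$-limit of functions vanishing outside $B_R(p_j)$ is a priori only known to vanish $\mu_\infty$-a.e. outside $\overline{B_R(p_\infty)}$, and one must exclude concentration of energy on the boundary sphere. This is exactly where the hypothesis $\partial B_R(p_\infty)=\partial\big(X_\infty\setminus\overline{B_R(p_\infty)}\big)$ — or the weaker $\mathrm{Cap}_2$-condition of the footnote — enters: it guarantees that the Lipschitz truncations $\eta_\varepsilon\cdot u_{i,\infty}$, with $\eta_\varepsilon=\min\{\varepsilon^{-1}(R-d_\infty(p_\infty,\cdot))^+,1\}$, lie in $W_0^{1,2}(B_R(p_\infty))$ and converge to $u_{i,\infty}$ in $W^{1,2}$ as $\varepsilon\to0$. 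Making this rigorous needs a careful analysis of $W^{1,2}$-functions near $\partial B_R$ on $RCD^*(K,N)$-spaces (Cavalieri/coarea estimates on level sets of the distance function, together with the uniform doubling and Poincar\'e bounds). An alternative route, which relocates rather than removes this subtlety, is to invoke the locally uniform convergence of the Dirichlet heat kernels established elsewhere in the paper: it gives convergence of the heat traces $\sum_m e^{-\lambda^{(R)}_{m,j}t}\to\sum_m e^{-\lambda^{(R)}_{m,\infty}t}$ for every $t>0$, hence weak convergence of the spectral measures $\sum_m\delta_{\lambda^{(R)}_{m,j}}$, and therefore $\lambda^{(R)}_{m,j}\to\lambda^{(R)}_{m,\infty}$ for every $m$.
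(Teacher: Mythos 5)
Your overall two--sided strategy tracks the paper's quite closely: the paper's Lemma \ref{lem3.6} plays the role of your lower bound step (subsequential limits of eigenfunctions are eigenfunctions of $\Delta^{(R)}_\infty$, with membership $\phi_{m,\infty}\in H^1_0(B_R(p_\infty))$ as the crux), and Lemma \ref{lem3.7} (completeness of $\{\phi_{m,\infty}\}$, proved by lifting a putative orthogonal eigenfunction $\psi_\infty$ and contradicting via its Fourier expansion in $\{\phi_{m,j}\}$) plays the role of your upper bound via transplanting. You have also correctly located exactly where the hypothesis $\partial B_R(p_\infty)=\partial\bigl(X_\infty\setminus\overline{B_R(p_\infty)}\bigr)$ must enter, namely the membership $u_{i,\infty}\in W^{1,2}_0(B_R(p_\infty))$.

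However, your proposed resolution of that membership via the truncations $\eta_\varepsilon\cdot u_{i,\infty}$ is circular and would not close: $\bigl|\nabla(\eta_\varepsilon u_{i,\infty})\bigr|\ls\eta_\varepsilon|\nabla u_{i,\infty}|+\varepsilon^{-1}|u_{i,\infty}|\mathbf 1_{\{R-\varepsilon<d_\infty(p_\infty,\cdot)<R\}}$, and to make the second term vanish in $L^2$ as $\varepsilon\to0$ one needs a Hardy--type decay of $u_{i,\infty}$ near $\partial B_R$ — exactly the information that $W^{1,2}_0$--membership would supply, and nothing a priori available. The paper closes this instead by a capacity / quasi--continuity argument that does not require any quantitative boundary decay: the zero extension $\tilde u_{i,\infty}$ is in $W^{1,2}(X_\infty)$ (weak $L^2$--compactness of the zero extensions plus the uniform energy bound), is $2$--quasi--continuous, and vanishes $\mu_\infty$--a.e.\ on the open set $X_\infty\setminus\overline{B_R(p_\infty)}$, hence vanishes $2$--q.e.\ there by \cite[Theorem 3.2]{kkm00}; Lemma \ref{lemma2.5} then propagates this vanishing to the closure $\overline{X_\infty\setminus\overline{B_R(p_\infty)}}$, and the boundary hypothesis identifies this closure with $X_\infty\setminus B_R(p_\infty)$, so Definition \ref{definition2.4} yields $u_{i,\infty}\in W^{1,2}_0(B_R(p_\infty))$ (this is Proposition \ref{prop3.2}(i) in full). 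As for your alternative route through the heat kernels: in the paper Theorem \ref{thm3.8} (local uniform convergence of $H_j^{(R)}$) is \emph{deduced from} Lemmas \ref{lem3.6}--\ref{lem3.7}, i.e.\ from the spectral convergence itself, so invoking heat--trace convergence to prove Proposition \ref{prop1.3} would be circular relative to the paper's logical structure.
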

\begin{rem}\label{remark1.4}
 A spectral convergence theorem for eigenvalues $\lambda_{m,j}(X_j)$, different from the local Dirichlet eigenvalues in Proposition \ref{prop1.3}, on a sequence of convergent compact metric measure spaces $(X_j,d_j,\mu_j)$ was proved by Gigli e.t.  in \cite{gms15}.
\end{rem}

The following example shows that the assumption
$$\partial B_R(p_\infty)=\partial \big(X_\infty\backslash \overline{B_R(p_\infty)}\big)$$
is necessary in  Proposition \ref{prop1.3}. We would like to thank Prof. S. Honda  for telling us such an example.

\begin{exam}\label{example1.5}
 Let $X_j:=[-1+\frac{1}{j},1-\frac{1}{j}]$ equip the Euclidean distance $d_E$ and the 1-dimensional Lebesgue measure $\mathcal L^1$ and let $p_j=0$. Then we have
$$\big(X_j,d_E,\mathcal L^1,p_j\big)\overset {pmGH}{\longrightarrow}\big(X_\infty:=[-1,1], d_E,\mathcal L^1,p_\infty:=0\big).$$
 Now we consider the balls $B_1(p_j)\ (=X_j)$. It is clear that $B_1(p_j)$ converge to $B_1(p_\infty)\ \big(=(-1,1)\big)$ in the sense of Gromov-Hausdorff, as $j\to\infty$. However, we remark that $\partial B_1(p_\infty)=\{-1,1\}$ and  that $\partial\big(X_\infty\backslash  \overline{B_1(p_\infty)}\big)=\varnothing.$

 Consider the first Dirichlet eigenvalue $\lambda_{1,j}$ of on $B_1(p_j)$. Because\ $\partial B_1(p_j)=\varnothing$, we have $Lip_0\big(B_1(p_j)\big)= Lip\big(B_1(p_j)\big)$. So the function $f=1$ is in $Lip_0\big(B_1(p_j)\big)$. This implies  $\lambda_{1,j}=0.$ On the other hand, it is obvious that the first Dirichlet eigenvalue of $B_1(p_\infty)$ is $\lambda_{1,\infty}=\pi^2/4$  (with eigenfunction $f(t)=\cos(\pi t/2)$).
\end{exam}
\begin{rem}\label{remark1.6}
 (1) Very recently, in an independent work \cite{aht17} by L. Ambrosio, S. Honda and D. Tewodrose, they show that the Weyl's law for eigenvalues $\lambda_j(X)$ of a whole compact $RCD^*(K,N)$-space (and the Neumann eigenvalues),  different from the local Dirichlet eigenvalues in this paper, holds if and only if
$$  \lim_{r\to 0}\int_X\frac{r^k}{\mu(B_r(x))}\du=\int_X  \lim_{r\to 0}\frac{r^k}{\mu(B_r(x))}\du,$$
where $k$ is the largest integer $k$ such that $\mu(\mathcal R_k) > 0$, and the $\mathcal R_k$ is the pieces in   the decomposition in \cite{mn14}. See also the constant $k_{\rm max}$ in Theorem~\ref{thm4.6}.

  (2) In another independent work \cite{ah17} by L. Ambrosio and S. Honda, they get that the same local spectral convergence result in Proposition \ref{prop1.3} holds if and only if the  following  condition holds:
$$W^{1,2}_0(B_R(p_\infty))= \cap_{\epsilon>0}W^{1,2}_0(B_{R+\epsilon}(p_\infty)).$$

(3) The condition $\mu\ll\mathscr H^N\ll\mu$ in Theorem \ref{thm1.1}  plays a role of non-collapsing.   G. De Philippis and  N. Gigli \cite{dg18} introduced the weak non-collapsed space by the condition $\nu\ll\mathscr H^N$, and very recently S. Honda \cite{honda19+} proved this implies $\mu=a\cdot\mathscr H^N$ for some constant $a\in(0,\infty)$ when $X$ is compact.\end{rem}

 \subsubsection*{Organization of the paper} In Section 2, we will provide some necessary
materials about $RCD^*(K,N)$ metric measure spaces and heat kernels on metric measure spaces.
In  Section 3, we will prove the locally uniformly convergence of heat kernels for a sequence of converging metric measure spaces. The main result Theorem~\ref{thm1.1} will be proved in Section 4.
At last, for the convenient of readers, we will give an appendix to introduce the dominated convergence theorem and Fatou's lemma for functions living on $pmGH$-converging metric measure spaces.

\subsubsection*{Acknowledgements} In the previous version we overlooked the condition  $\partial B_R(p_\infty)=\partial \big(X_\infty\backslash \overline{B_R(p_\infty)}\big)$ in Proposition \ref{prop1.3}. We appreciate Prof. S. Honda for showing us  Example \ref{example1.5}.
We are also grateful to Prof. Ambrosio and S. Honda for sharing us their  manuscripts \cite{ah17,aht17}.
We would like to  thank  the anonymous referees for very careful reading and  many useful suggestions.
We  thank also Prof. D. G. Chen,  B. B. Hua and Z. Q. Wang, and Dr. X. T. Huang for their interesting in the paper.
Both authors are partially supported by NSFC 11521101, and the first author is also partially supported
by NSFC 11571374.

\section{Preliminaries}

Let $(X,d)$ be a complete metric measure space  and $\mu$ be a Radon measure on $X$ with ${\rm supp}(\mu)=X.$  Given any $p\in X$ and $R>0$, we
denote by $B_R(p)$ the  ball centered at $p$ with radius $R$.

\subsection{Riemannian curvature-dimension conditions  \emph{RCD*}(\emph{K,N})}

Let $(X,d,\mu)$ be a metric measure space.
We denote by $\mathscr P_2(X,d)$  the $L^2$-Wasserstein space over $(X,d)$, i.e., the set of all Borel probability measures $\nu$ with
$$\int_Xd^2(x_0,x){\rm d}\nu(x)<\infty$$
for some (hence for all) $x_0\in X$. Given $\nu_1,\nu_2\in \mathscr P_2(X,d)$, the  $L^2$-Wasserstein distance between them  is defined by
$$ W_2^2(\nu_0,\nu_1):=\inf\int_{X\times X}d^2(x,y){\rm d}q(x,y)$$
where the infimum is taken over all couplings $q$ of $\nu_1$ and $\nu_2$, i.e., Borel probability measures $q$ on $X\times X$ with marginals $\nu_0$ and $\nu_1.$
The relative entropy is a functional on $\mathscr P_2(X,d)$, defined by
$$ {\rm Ent}(\nu):=\int_X\rho\ln \rho \du,$$
if $\nu=\rho\cdot\mu$ is absolutely continuous w.r.t. $\mu$ and $(\rho\ln \rho)_+$ is integrable. Otherwise we set ${\rm Ent}(\nu)=+\infty.$ Let $\mathscr P^*_2(X,d,\mu)\subset  \mathscr P_2(X,d)$ be the subset of all measures $\nu$ such that ${\rm Ent}(\nu)$ is finite.

 We set the function
\begin{equation*}
\sigma^{(t)}_k(\theta):=
\begin{cases}
\frac{\sin(\sqrt k\cdot t\theta)}{\sin(\sqrt k\cdot \theta)},& \quad 0<k\theta^2<\pi^2,\\
t, &\quad  k\theta^2=0,\\
\frac{\sinh(\sqrt{-k}\cdot t\theta)}{\sinh(\sqrt{- k}\cdot \theta)},& \quad k\theta^2<0,\\
\infty,&\quad  k\theta^2\gs\pi^2.
\end{cases}
\end{equation*}

\begin{defn}[\cite{eks15}]\label{definition2.1}
 Let $K\in\mathbb R$ and $N\in[1,\infty)$. A metric measure\ space $(X,d,\mu)$ is called to satisfy the \emph{entropy curvature-dimension condition} $CD^e(K,N)$ if and only if for each pair $\nu_0,\nu_1 \in \mathscr P^*_2(X,d,\mu)$ there exists a constant speed geodesic $(\nu_t)_{0\ls t\ls1}$ in $ \mathscr P^*_2(X,d,\mu)$ connecting $\nu_0$ to $\nu_1$ such that for all $t\in[0,1]$:
\begin{equation}\label{equa2.1}
\begin{split}
U_N(\nu_t)\gs  \sigma^{(1-t)}_{K/N}\big(W_2(\nu_0,\nu_1)\big)\cdot U_N(\nu_0)+ \sigma^{(t)}_{K/N}\big(W_2(\nu_0,\nu_1)\big)\cdot U_N(\nu_1),
\end{split}
\end{equation}
where $U_N(\nu):=\exp\big(-\frac{1}{N}{\rm Ent}(\nu)\big)$.
\end{defn}

Given a locally Lipschitz continuous function $f$ on $X$,  the \emph{pointwise Lipschitz constant} (\cite{che99}) of $f$ at $x$  is defined by
\begin{equation*}
{\rm Lip} f(x):=\limsup_{y\to x}\frac{|f(y)-f(x)|}{d(x,y)}=\limsup_{r\to0}\sup_{d(x,y)\ls r}\frac{|f(y)-f(x)|}{r},
\end{equation*}
and $ {\rm Lip}f(x)=0$ if $x$ is isolated. It is clear that ${\rm Lip}f$ is    $\mu$-measurable. The \emph{Cheeger energy}, denoted by ${\rm Ch}: L^2(X)\to[0,\infty]$, is defined \cite{ags14} by
$${\rm Ch}(f):=\inf\left\{\liminf_{j\to\infty}\frac 1 2\int_X {\rm Lip}^2 f_j{\rm d}\mu\right\},$$
where the infimum is taken over all sequences of Lipschitz functions $(f_j)_{j\in\mathbb N}$ converging to $f$ in $L^2(X)$. In general, ${\rm Ch}$ is a lower semi-continuous convex functional.
\begin{defn}[\cite{gig15}]\label{definition2.2}
 A metric measure space $(X,d,\mu)$ is called \emph{infinitesimally Hilbertian} if the associated Cheeger energy is quadratic.
\end{defn}

Several equivalent definitions for Riemannian curvature-dimension condition were introduced in \cite{ags-duke,eks15,ams16,gig13,gig15}. In this paper, we adapt the following notions    for the convenience.

\begin{defn}[\cite{eks15}]
 Let $K\in\mathbb R$ and $N\in[1,\infty)$. A metric measure\ space   $(X,d,\mu)$ is said to satisfy \emph{Riemannian curvature-dimension condition} $RCD^*(K,N)$,   if it is infinitesimally Hilbertian and satisfies the $CD^e(K,N)$ condition.
\end{defn}

Let $N>1$, the generalized Bishop-Gromov inequality for  $RCD^*(K,N)$ space (by a combination of  \cite[Corollary of 1.5]{grs16} and \cite[Remark 5.3]{stu06-2}) states that
 for any $p\in X$ and any $0<r<R$,
\begin{equation}\label{equa2.2}
\frac{\mu\big(B_R(p)\big)}{\int_0^R\mathfrak{s}^{N-1}_{\frac{K}{N-1}}(\tau){\rm d}\tau}\ls \frac{\mu\big(B_r(p)\big)}{\int_0^r\mathfrak{s}^{N-1}_{\frac{K}{N-1}}(\tau){\rm d}\tau},
\end{equation}
where the function $\mathfrak{s}_k(\tau)$ is given by
\begin{equation}\label{equa2.3}
\mathfrak{s}_{k}(\tau)=
\begin{cases}
\frac{\sin(\sqrt k\cdot \tau)}{\sqrt k}& {\rm if} \ \  k>0,\\
\tau & {\rm if} \ \ k=0,\\
\frac{\sinh(\sqrt {-k}\cdot \tau)}{\sqrt {-k}}& {\rm if} \ \  k<0.
\end{cases}
\end{equation}

Let $(X,d,\mu)$ be a metric measure space with  $RCD^*(K,N)$ for some $K\in\mathbb R$ and $N \gs 1$. We summarize some basic properties in \cite{stu06-2,ags-duke,agmr-trans,eks15} as follows:\\
$\bullet$ $(X,d)$ is a locally compact length space, i.e.,  for any $p,q\in X$, there is a shortest curve joined them;\\
$\bullet$ $(X,d,\mu)$  has a local measure doubling property on each ball $B_R(x)\subset X$. Moreover,  we have that, for all $0<r<R$,
\begin{equation}\label{equa2.4}
 \frac{\mu\big( B_R(p)\big)}{\mu\big( B_r(p)\big)}\ls \Big(\frac R r\Big)^N\cdot \exp\big(\sqrt{(N-1)|K\wedge0|}\cdot R\big):=C_{N,K,R}\cdot \Big(\frac R r\Big)^N;
 \end{equation}
$\bullet$   $(X,d,\mu)$ supports a local $L^2$-Poincar\'e inequality on each ball $B_R(x)\subset X$. Moreover, the Poincar\'e constant  $C_P(N,K,R)$ depends only on  $N$ and $\sqrt{|K\wedge0|}R$.\\
$\bullet$   The canonical Dirichlet form $({\rm Ch}, D({\rm Ch}))$ is strongly local and regular, and admits a Carr\'e du champ $\Gamma(f) $ for each $f\in D({\rm Ch})$. Moreover, the intrinsic distance $d_{\rm Ch}$ induced by  $({\rm Ch}, D({\rm Ch}))$ coincides with the original distance $d$ on $X$ (see \cite{ags-duke,agmr-trans});\\
$\bullet$ The heat kernel $H(x,y,t)$ on $X$ exists  (see \cite{stu95} and \cite[Theorem 1.2]{jlz16}), and   there is a positive constant $C_{N,K}$,  depending only on $N$ and $K\wedge 0$, such that
\begin{equation}\label{equa2.5}
H(x,y,t)\ls \frac{C_{N,K}}{\mu\big(B_{\sqrt t}(x)\big) }\exp\left(-\frac{d^2(x,y)}{5t}+C_{N,K}\cdot t\right).
\end{equation}

\subsection{Sobolev spaces, local Dirichlet heat kernels and Dirichlet eigenvalues}

 Several different notions of Sobolev spaces for   metric measure spaces  have been given in  \cite{che99,shan00,ags14,ags13-lip,haj96,hk00}.  In this paper, we will pay our attentions to the $RCD^*(K,N)$-spaces for some $K\in\mathbb R$ and $N\gs 1$.
  In the case, the notions of Sobolev spaces in \cite{che99,shan00,ags14,ags13-lip}  coincide each other (see, for example, \cite{ags14,ags13-lip}), and they have the equivalent norms with the notion  of Sobolev spaces in \cite{haj96,hk00}.

Let $(X,d,\mu)$ be a metric measure space with  $RCD^*(K,N)$ for some $K\in\mathbb R$ and $N \geq 1$. For an open subset $\Omega\subset X$, we denote by    $Lip_{\rm loc}(\Omega)$ (and $Lip_0(\Omega)$), the set of all locally Lipschitz continuous functions on $\Omega$ (and the set of all locally Lipschitz continuous functions $f$ such that $d({\rm supp}(f),\partial\Omega)>0$, respectively).
Let $p\in[1,\infty]$ and let $f\in L^p(\Omega)\cap Lip_{\rm loc}(\Omega)$. The $W^{1,p}(\Omega)$-norm, $\|f\|_{1,p}$, is given by
$$\|f\|_{1,p}:=\|f\|_p+ \|{\rm Lip}f\|_p,$$
here and in the sequel, we denote $\|f\|_p:=\|f\|_{L^p}.$
 The Sobolev space $W^{1,p}(\Omega)$ is defined to be  the completion of  all   locally Lipschitz continuous, $f$, for which $\|f\|_{1,p}<\infty$, with respect to
 the norm $\|f\|_{1,p}$.  Given $p\in(1,\infty)$, it was proved \cite{che99,ags13-lip}
, for each $f\in W^{1,p}(\Omega)$, that there exists a function $|\nabla f|\in L^p(\Omega)$, called the  \emph{minimal weak upper gradient}, such that
$$\|f\|_{1,p}=\|f\|_p+\||\nabla f|\|_p.$$
For a locally Lipschitz function $f\in W^{1,p}(\Omega)$, it was showed \cite{che99} that $|\nabla f|={\rm Lip}f$ a.e. in $\Omega$.
We say that a function $f\in W^{1,p}_{\rm loc}(\Omega)$ if $f\in W^{1,p}(\Omega')$ for every open subset $\Omega'\subset\subset\Omega.$  We refer the readers to \cite{che99,shan00,ags13-lip,gig15} for further information of these Sobolev spaces.

For $1<p<\infty$, let us recall from \cite{hm96} that the \emph{Sobolev p-capacity} of the set  $E\subset X$:
\begin{align*}
	{\rm Cap}_p(E):=\inf\big\{&\|f\|^p_{W^{1,p}(X)}:\ f\in W^{1,p}(X)\\
       &{\rm such \ that }\ f\gs 1\ {\rm on\ a\ neighborhood\ of }\ E\big\}.\text{\footnotemark}
\end{align*}
\footnotetext{In \cite{hm96}, the definition of Sobolev $p$-capacity was given via the Sobolev norm in \cite{haj96}. Meanwhile, according to \cite{shan00}, the Sobolev norms in \cite{haj96} is equivalent  to the one in \cite{che99,shan00,ags13-lip}. Therefore, the following both definitions of $p-$quasi everywhere and $p$-quasi continuity concide with the corresponding definitions in \cite{hm96}.}\noindent If there is no such a function $f$, we set ${\rm Cap}_p(E)=\infty.$ It is clear that ${\rm Cap}_p(\overline E)={\rm Cap}_p(E).$
  An equivalent definition is given in \cite{shan00}, see for instance \cite[Theorem 3.4]{kkm00} and \cite{shan01}.

A property holds $p$-q.e. ($p$-quasi everywhere), if it holds except of a set $Z$ with ${\rm Cap}_p(Z)=0$. Since ${\rm Cap}_p(\overline{Z})={\rm Cap}_p(Z),$ we may assume that the except set $Z$ is closed. A function $f \!:\!X\!\to\![-\infty,\infty]$ is called $p$-$quasi\ continuous$ in $X$ if for each $\epsilon>0$, there is a set $F_\epsilon$ such that ${\rm Cap}_p(F_\epsilon)<\epsilon$ and the restriction $f|_{X\backslash F_\epsilon}$ is continuous. We may also assume that $F_\epsilon$ is closed.

It is well-known that any $W^{1,p}$-function $f$ has a $p$-quasi continuous representative (see \cite{hm96}). We will always use such a representative in this paper.
In \cite[Theorem 3.2]{kkm00}, it is proved that, for any two $p$-quasi continuous functions $f$ and $g$, if $f=g$ $\mu$-a.e. in an open set $O$, then  $f=g$ $p$-q.e. in $O$.

\begin{defn}[\cite{kkm00}]\label{definition2.4}
Let $1<p<\infty$ and  $E\subset X$,  a function $f$ on $E$ is called to belong to the Sobolev space with zero boundary values, denoted by $f\in W^{1,p}_0(E)$, if there exists a $p$-quasi continuous function $\tilde f\in W^{1,p}(X)$ such that $\tilde f=f$ $\mu$-a.e. in $E$ and $\tilde f=0$ $p$-q.e. in $X\backslash E$.
\end{defn}

 According to \cite[Remark 5.10]{kkm00} (see also \cite[Theorem 4.8]{shan01}),  the space $W_0^{1,p}(\Omega)=H^{1,p}_0(\Omega)$, which is the closure of $Lip_0(\Omega)$ under the $W^{1,p}(\Omega)$-norm.
Given any open set $\Omega\subset X$, it is clear that  $W^{1,p}_0(\Omega)\subset W^{1,p}_0(\overline{\Omega})$. However, generally speaking,      $W^{1,p}_0(\Omega)\neq W^{1,p}_0(\overline{\Omega})$.
\begin{lem}\label{lemma2.5}
Let $O\subset X$ be an open set and let $1<p<\infty$. Suppose that $f $ is  a $p$-quasi continuous in $X$ and that $f=0$ $p$-q.e. in $O$. Then we have
that $f=0$ $p$-q.e. in $\overline O.$
\end{lem}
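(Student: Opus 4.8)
The plan is to reduce the claim about $\overline{O}$ to the given claim about $O$ by exploiting the continuity of a quasi-continuous representative off a small exceptional set, together with the fact that the boundary $\partial O$ is approached from within $O$. First I would fix $\epsilon>0$ and, using $p$-quasi continuity of $f$, choose a closed set $F_\epsilon$ with ${\rm Cap}_p(F_\epsilon)<\epsilon$ such that $f|_{X\setminus F_\epsilon}$ is continuous. By hypothesis $f=0$ $p$-q.e.\ in $O$; enlarging $F_\epsilon$ if necessary (adding a closed set of capacity $<\epsilon$, using countable subadditivity of ${\rm Cap}_p$), I may assume $f\equiv 0$ on $O\setminus F_\epsilon$ as well. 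The goal is then to show $f=0$ on $\overline{O}\setminus F_\epsilon$, which would give $\{x\in\overline O:\ f(x)\neq 0\}\subset F_\epsilon$, hence ${\rm Cap}_p\big(\{x\in\overline O:\ f(x)\neq 0\}\big)\leqslant {\rm Cap}_p(F_\epsilon)<\epsilon$; letting $\epsilon\to 0$ through a suitable nested sequence (or just invoking that a set contained in sets of arbitrarily small capacity has capacity zero) finishes the proof.

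The core step is the pointwise argument: take $x\in\overline{O}\setminus F_\epsilon$. If $x\in O$ we are done by construction, so suppose $x\in\partial O$. Since $X$ is a length space (indeed just because $x\in\overline O$), there is a sequence $x_k\in O$ with $x_k\to x$. Because $F_\epsilon$ is closed and $x\notin F_\epsilon$, the open set $X\setminus F_\epsilon$ contains a neighborhood of $x$, so for $k$ large we have $x_k\in O\setminus F_\epsilon$, whence $f(x_k)=0$. Now $f|_{X\setminus F_\epsilon}$ is continuous and $x_k\to x$ inside $X\setminus F_\epsilon$, so $f(x)=\lim_k f(x_k)=0$. This proves $f\equiv 0$ on $\overline O\setminus F_\epsilon$.

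The only delicate point is the reduction "enlarging $F_\epsilon$ so that $f\equiv 0$ on $O\setminus F_\epsilon$": the hypothesis gives a closed set $Z$ with ${\rm Cap}_p(Z)=0$ and $f=0$ on $O\setminus Z$, and one replaces $F_\epsilon$ by $F_\epsilon\cup Z$, which is still closed with ${\rm Cap}_p(F_\epsilon\cup Z)\leqslant {\rm Cap}_p(F_\epsilon)+{\rm Cap}_p(Z)<\epsilon$ by subadditivity, and on whose complement $f$ is both continuous and identically zero on the portion lying in $O$. I expect this bookkeeping, rather than any geometric input, to be the main thing to get right; the rest is the elementary topological observation that points of $\partial O$ are limits of points of $O$ and that a continuous function vanishing on a dense-in-$\overline O$ subset of $\overline O\setminus F_\epsilon$ vanishes on all of $\overline O\setminus F_\epsilon$. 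Note no curvature or doubling hypothesis is actually needed here beyond the general theory of Sobolev $p$-capacity recalled above.
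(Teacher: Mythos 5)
Your proof is correct and rests on the same key observation as the paper's: once one passes to a closed exceptional set $F_\epsilon\cup Z$ of small capacity off which $f$ is continuous, any $x\in\partial O$ outside that set is a limit of points $x_k\in O$ also outside that set, and continuity forces $f(x)=0$. The paper phrases this as a proof by contradiction (assuming a subset $A\subset\partial O$ of positive capacity where $f\neq0$ and choosing $\epsilon<{\rm Cap}_p(A)/2$), while you give the direct version ($\{x\in\overline O:f(x)\neq0\}\subset F_\epsilon\cup Z$ for every $\epsilon$, hence has capacity zero); the two are the same argument differently packaged. One small remark: you invoke countable subadditivity to bound ${\rm Cap}_p(F_\epsilon\cup Z)$, but finite subadditivity together with ${\rm Cap}_p(Z)=0$ already suffices.
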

\begin{proof}
From the definition, we know that  $f=0$ $p$-q.e. in $O.$ So it suffices to show that   $f=0$ $p$-q.e. in $\partial O.$ We can assume    ${\rm Cap}_p(\partial O)>0$. Otherwise, it is nothing to do.

We will argue by a contradiction. Suppose that there is a subset $A\subset \partial O$ such that ${\rm Cap}_p(A)>0$ and that $f(x)\not=0$ for any $x\in A$.

Taken arbitrarily  $\epsilon\in (0,{\rm Cap}_pA/2),$ since $f $ is  a $p$-quasi continuous in $X$, we can find a closed set $F_\epsilon$ with ${\rm Cap}_p(F_\epsilon)<\epsilon$ and the restriction $f|_{X\backslash F_\epsilon}$ is continuous. Noting that  $f=0$ $p$-q.e. in $O,$ i.e., there exists a closed set $Z$ with ${\rm Cap}_p(Z)=0$ such that $f=0$ on $O\backslash Z$. We have that the restriction $f|_{O\backslash (F_\epsilon\cup Z)}\equiv0$ and that $f|_{X\backslash (F_\epsilon\cup Z)}$ is continuous.

By ${\rm Cap}_p(F_\epsilon\cup Z)<\epsilon< {\rm Cap}_pA/2$,  we can find a point $x_0\in A\backslash (F_\epsilon \cup Z).$  There exists a sequence $\{x_j\}_{j=1}^\infty\subset O$ with $\lim_{j\to\infty}x_j=x_0$, since $x_0\in \partial O$. Noting that $F_\epsilon\cup Z$ is closed and $x_0\not\in F_\epsilon\cup Z$, we know  that $x_j \not\in F_\epsilon\cup Z $ for all sufficiently large $j$. By combining the facts  that $f|_{X\backslash (F_\epsilon\cup Z)}$ is continuous at $x_0$ and that $f(x_j)=0$ for all large $j$ (since $x_j\in O\backslash (F_\epsilon\cup Z)$ for all large $j$), we conclude that   $f(x_0)=0.$ This contradicts with $x_0\in A$, and hence we finish the proof.
\end{proof}

\begin{cor}\label{corollary2.6}
Let $\Omega\subset X$ be an open set and let $1<p<\infty$. If $\partial \Omega=\partial(X\backslash\overline\Omega)$, then  $W^{1,p}_0(\Omega)= W^{1,p}_0(\overline{\Omega})$.
\end{cor}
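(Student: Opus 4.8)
The plan is to lean on the general inclusion $W^{1,p}_0(\Omega)\subset W^{1,p}_0(\overline{\Omega})$ already recorded in the text, so that only the reverse inclusion $W^{1,p}_0(\overline{\Omega})\subset W^{1,p}_0(\Omega)$ needs an argument. For this I would take $f\in W^{1,p}_0(\overline{\Omega})$ and, by Definition \ref{definition2.4}, fix a $p$-quasi continuous representative $\tilde f\in W^{1,p}(X)$ with $\tilde f=f$ $\mu$-a.e. in $\overline{\Omega}$ and $\tilde f=0$ $p$-q.e. in $X\backslash\overline{\Omega}$. Since $\Omega\subset\overline{\Omega}$, the condition $\tilde f=f$ $\mu$-a.e. in $\Omega$ is automatic, so by Definition \ref{definition2.4} again it suffices to check that $\tilde f=0$ $p$-q.e. in $X\backslash\Omega$.

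Because $\Omega$ is open we have $\overline{\Omega}\backslash\Omega=\partial\Omega$, hence $X\backslash\Omega=(X\backslash\overline{\Omega})\cup\partial\Omega$, and since $\tilde f=0$ $p$-q.e. in $X\backslash\overline{\Omega}$ by hypothesis, the whole problem reduces to showing $\tilde f=0$ $p$-q.e. on $\partial\Omega$. This is exactly where Lemma \ref{lemma2.5} enters: I would apply it to the open set $O:=X\backslash\overline{\Omega}$, on which $\tilde f=0$ $p$-q.e., to conclude $\tilde f=0$ $p$-q.e. on $\overline{O}=\overline{X\backslash\overline{\Omega}}$. Using the elementary identity $\overline{A}=A\cup\partial A$ for any set $A$, we get $\overline{X\backslash\overline{\Omega}}=(X\backslash\overline{\Omega})\cup\partial(X\backslash\overline{\Omega})$, and the standing hypothesis $\partial\Omega=\partial(X\backslash\overline{\Omega})$ shows that this set contains $\partial\Omega$. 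Therefore $\tilde f=0$ $p$-q.e. on $\partial\Omega$, and combining with $\tilde f=0$ $p$-q.e. on $X\backslash\overline{\Omega}$ yields $\tilde f=0$ $p$-q.e. on $X\backslash\Omega$. By Definition \ref{definition2.4} this means $f\in W^{1,p}_0(\Omega)$, which finishes the reverse inclusion and hence the corollary.

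The argument is essentially bookkeeping once Lemma \ref{lemma2.5} is available; the only genuine input beyond it is the set-theoretic observation that the hypothesis $\partial\Omega=\partial(X\backslash\overline{\Omega})$ is precisely what forces $X\backslash\Omega=\overline{X\backslash\overline{\Omega}}$ (together with $\overline{\Omega}\backslash\Omega=\partial\Omega$ and $\overline{A}=A\cup\partial A$). So I do not expect a real obstacle here; the one point requiring mild care is that all the various ``$p$-q.e.'' exceptional sets can be taken closed and combined into a single closed capacity-zero set, which is already covered in the discussion preceding Lemma \ref{lemma2.5}.
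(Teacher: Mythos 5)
Your proof is correct and follows essentially the same route as the paper: fix a $p$-quasi continuous representative $\tilde f$ of $f\in W^{1,p}_0(\overline{\Omega})$, apply Lemma \ref{lemma2.5} with $O=X\backslash\overline{\Omega}$ to get vanishing $p$-q.e. on $\overline{X\backslash\overline{\Omega}}$, and use the hypothesis $\partial\Omega=\partial(X\backslash\overline{\Omega})$ to identify this closure with $X\backslash\Omega$. The paper phrases the set-theoretic step as a direct chain of equalities showing $\overline{X\backslash\overline{\Omega}}=X\backslash\Omega$, while you split off the boundary piece and note the hypothesis gives $\partial\Omega\subset\overline{X\backslash\overline{\Omega}}$, but these are the same observation in different wording.
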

\begin{proof}
It suffices to show $W^{1,p}_0(\Omega)\supset W^{1,p}_0(\overline{\Omega})$. Given any $f\in W^{1,p}_0(\overline{\Omega})$, there exists   a $p$-quasi continuous function $\tilde f$  in $X$ such that $\tilde f=f$ $\mu$-a.e. in $\overline{\Omega}$ and that $\tilde f=0$ $p$-q.e. in $X\backslash\overline{\Omega}.$ By applying Lemma \ref{lemma2.5} to $\tilde f$ and $O:=X\backslash\overline{\Omega}$, we conclude that   $\tilde f=0$ $p$-q.e. in $\overline {X\backslash\overline{\Omega}}.$ The assumption $\partial \Omega=\partial(X\backslash\overline\Omega)$ implies
$$\overline {X\backslash\overline{\Omega}}=(X\backslash\overline\Omega)\cup \partial(X\backslash\overline\Omega)=(X\backslash\overline\Omega)\cup \partial\Omega   =X\backslash \Omega.$$
Therefore, we get that  $\tilde f=0$ $p$-q.e. in $X\backslash \Omega.$ Noting that $\tilde f=f$ $\mu$-a.e. in $\Omega\subset\overline{\Omega}$, we have $f\in W^{1,2}_0(\Omega)$, by  Definition \ref{definition2.4}.
The proof is finished.
\end{proof}
\begin{rem}\label{rem2.4}
(1) In fact, in   Corollary \ref{corollary2.6}, we only need to assume that
$${\rm Cap}_p\big(\partial \Omega\backslash\partial(X\backslash\overline\Omega)\big)=0.$$
(2) The space $W^{1,p}_0(\overline{\Omega})$ is equivalent to the space $\hat{H}^{1,p}_0(\Omega)$ given in  \cite{ah17} by Ambrosio-Honda.
\end{rem}

Let $(X,d,\mu)$   be an $RCD^*(K,N)$ metric measure space with some $K\in \mathbb R$ and some $N\gs1$.   Given any  bounded open set $\Omega\subset X$ and $p\in(1,\infty)$, according to \cite[\S 4.3]{gig15}, the space $W^{1,2}(\Omega)$ is  a Hilbert space, and  for any $f,g\in W^{1,2}_{\rm loc}(\Omega)$, the inner product $\ip{\nabla f}{\nabla g}$ is well defined in $ L^1_{\rm loc}(\Omega)$.
In the sequel of the paper, we will always denote that
$$H^1(\Omega):=W^{1,2}(\Omega), \quad H^1_0(\Omega):=W_0^{1,2}(\Omega) \quad{\rm and}\quad H^1_{\rm loc}(\Omega):=W^{1,2}_{\rm loc}(\Omega).$$

 For any fixed bounded domain $\Omega\subset  X$,  we consider the canonical Dirichlet form $(\mathscr E_{\Omega},H^1_0(\Omega))$, where
\begin{equation}\label{equa2.6}
\mathscr E_\Omega(f):=\int_\Omega|\nabla f|^2\du,\quad f\in H^1_0(\Omega).
\end{equation}
This canonical Dirichlet form is strongly local and regular   (see, for example, the proof of \cite[Lemma 6.7]{ags-duke}). Indeed, the strong locality is  a consequence of the locality of minimal weak upper gradients and the regularity comes from the density of Lipschitz functions in $H^{1}_0(\Omega).$
  The associated infinitesimal generator of $(\mathscr E_{\Omega},H^1_0(\Omega))$, denoted by $\Delta_\Omega$ with domain $D(\Delta_\Omega)$, is a non-positive definite self-adjoint operator, and the associated analytic semi-group is $(H_tf)_{t\gs0}$ for any $f\in L^2(\Omega)$.
 If ${\rm diam}(\Omega)\ls {\rm diam}(X)/s$ for some $s >1$,   a compact embedding of $H^{1}_0(\Omega)$ into $L^2(\Omega)$ was proved in \cite{hk00} (see also \cite{gms15} for $RCD^*(K,\infty)$-spaces for some $K\in \mathbb R$, or \cite[Eq.(5.2)]{bm06} for the spaces with a local measure doubling property and  a local $L^2$-Pincar\'e inequality, by the equivalence of the Sobolev norms in \cite{che99, ags14,ags13-lip} and in \cite{haj96,hk00}). Hence  the operator $(Id-\Delta_\Omega)^{-1}$ is compact. The spectral theorem implies that spectrum is discrete (see, for example \cite{dav89}). We denote by
$$0<\lambda_1^{\Omega}\ls \lambda_2^{\Omega}\ls \cdots\ls \lambda_m^{\Omega}\ls \cdots, \quad j\in \mathbb N,$$
the (Dirichlet) eigenvalues of  $\Delta_\Omega$. For each $\lambda_m^{\Omega}$, the associated eigenfunction is $\phi_m^{\Omega}$, i.e.,
\begin{equation}\label{equa2.7}
\Delta_\Omega \phi_m^{\Omega}=- \lambda_1^{\Omega}\phi_m^{\Omega}.
\end{equation}
 We normalize them so that $\|\phi_m^{\Omega}\|_2=1 $ for each $m\in\mathbb N$. It is well-known that the sequence $\{\phi_m\}_{m\in\mathbb N}$ forms a complete basis of $L^2(\Omega)$, and that the (local) Dirichlet heat flow is given by
 $$ H_tf(x)=\int_\Omega H^\Omega(t,x,y)f(y)\du,\quad t\gs0,\ \ \forall f\in L^2(\Omega),$$
  where
\begin{equation}\label{equa2.8}
H^{\Omega}(t,x,y)=\sum_{m\gs1}e^{-\lambda^{\Omega}_m}\phi^{\Omega}_m(x)\phi^{\Omega}_m(y),\quad \forall (x,y,t)\in \Omega\times\Omega\times(0,\infty).
\end{equation}
 is the (local) Dirichlet heat kernel  (the fundamental solution of the heat equation with Dirichlet boundary value).

The weak maximum principle implies the monotonicity of Dirichlet heat kernels with respect to domains. Namely, given two domains $\Omega\subset \Omega'\subset X$, we have
$$H^{\Omega}(t,x,y)\ls H^{\Omega'}(t,x,y) ,\quad \forall (x,y,t)\in \Omega\times\Omega\times(0,\infty).$$
The existence and Gaussian bounds of the global heat kernels have been established in \cite{stu95} on $(X,d,\mu)$. Thus for a sequence of balls $\{B_{R_j}(x_0)\}$ with $R_j\nearrow\infty$, the heat kernels $H^{B_{R_j}(x_0)}(x,y,t) $ converge  to a global heat kernel $H(x,y,t)$ on $X\times X\times(0,\infty)$, as $R_j\nearrow \infty$.

 Let us recall the definition of the  \emph{distributional Laplacian}.  Given a function $f\in H^1_{\rm loc}(\Omega)$,  the distributional Laplacian $\mathscr Lf$ is defined as a functional
\begin{equation}\label{equa2.9}
\mathscr Lf(\phi):=-\int_\Omega\ip{\nabla f}{\nabla \phi}\du,\quad \forall \phi\in H^1_0(\Omega)\cap L^\infty(\Omega).
\end{equation}
If $f\in H^1(\Omega)$, then $\mathscr Lf$ can be extended to a functional on $H^1_0(\Omega).$ It is clear that if $f\in D(\Delta_{\Omega})$ and $\Delta_\Omega f=g,$ then $\mathscr Lf= g\cdot\mu $ in the sense of distributions. Conversely,
it was proved \cite{gig15} that any $f\in H^1_0(\Omega)$, if there is   $g\in L^2(\Omega)$ such that $\mathscr Lf= g\cdot\mu $ in the sense of distributions, then $f\in D(\Delta_{\Omega})$ and $\Delta_\Omega f=g.$

\subsection{Pointed measured Gromov-Hausdorff convergence}

A pointed metric measure space $(X,d,\mu,p)$ is a metric measure space\ $(X,d,\mu)$ with a base point $p\in {\rm supp}(\mu)$. Recall that we always assume\ ${\rm supp}(\mu)=X$.
\begin{defn}\label{definition2.8}
Let $(X_j,d_j,\mu_j,p_j)$,   $j\in\mathbb N\cup\{\infty\},$ be a sequence of pointed metric measure spaces. It is said that  $(X_j,d_j,\mu_j,p_j)_{j\in\mathbb N}$ converge  to  $(X_\infty,d_\infty,\ \mu_\infty,p_\infty)$, as $j\to\infty$, in the sense of \emph{pointed measured Gromov-Hausdorff} topology, denoted by
  $$ (X_j,d_j,\mu_j,p_j)\overset{pmGH}{\longrightarrow} (X_\infty,d_\infty,\mu_\infty,p_\infty),$$
 if for any fixed $\epsilon,R>0$, there exists a constant $N(\epsilon,R)>0$ such that, for every $j\gs N(\epsilon,R)$, there exists a Borel map  $\Phi_j^{\epsilon,R}:B_R(p_j)\to X_\infty$ such that
\begin{itemize}
	\item[(1)] $\Phi_j^{\epsilon,R}(p_j)=p_\infty$;
	\item[(2)]  for all $x,y\in B_R(p_j)$, $\ |d_\infty\big(\Phi_j^{\epsilon,R}(x),\Phi_j^{\epsilon,R}(y)\big)- d_j(x,y)|\ls \epsilon;$
	\item[(3)] the $\epsilon$-neighborhood of $\Phi_j^{\epsilon,R}(B_R(p_j))$ contains $B_{R-\epsilon}(p_\infty)$;
	\item[(4)] the Levi metric $\rho_L$ between the measures $(\Phi_j^{\epsilon,R})_\sharp(\mu_j|_{B_R(p_j)})$ and\ $\mu_\infty|_{B_R(p_\infty)}$ is less  than $\epsilon$, for almost all $R>0$. Here the Levi metric $\rho_L(\nu_1,\nu_2)<\epsilon$ for two measures $\nu_1,\nu_2$ if and only if for any $\delta>0$, the $\delta$-neighborhood $A_\delta$ of $A$, there hold
 $$\nu_1(A)\ls \nu_2(A_\delta)+\epsilon\quad {\rm and}\quad \nu_2(A)\ls \nu_1(A_\delta)+\epsilon.$$
\end{itemize}
Such maps $\Phi^{\epsilon,R}_j$ are called   $\epsilon$-mGH approximations. Remark that the Levi metric convergence is equivalent to the measure's weak convergence.
 \end{defn}

Recall that any $RCD^*(K,N)$-space is a length space.  The pointed measured Gromov-Hausdorff convergence  on length spaces can be given as follows (see, for example, \cite[Remark 3.29]{gms15}).
\begin{prop}\label{prop2.9}
Let $(X_j,d_j,\mu_j,p_j)$,   $j\in\mathbb N\cup\{\infty\},$ be a sequence of pointed metric measure spaces. Assume that  all $(X_j,d_j)_{j\in\mathbb N}$ are length spaces. Then
  $$ (X_j,d_j,\mu_j,p_j)\overset{pmGH}{\longrightarrow} (X_\infty,d_\infty,\mu_\infty,p_\infty)$$
  is equivalent to the following:$\ $
 There exist sequences $R_j\nearrow\infty$, $\epsilon_j\searrow0$ and Borel maps  $\Phi_j:X_j\to X_\infty$ such that
\begin{itemize}
	\item[(1')] $\Phi_j(p_j)=p_\infty$;
	\item[(2')]  for all $x,y\in B_{R_j}(p_j)$, $\ |d_\infty\big(\Phi_j(x),\Phi_j(y)\big)- d_j(x,y)|\ls \epsilon_j$ and\\ $\Phi_j(B_{R_j}(p_j))\subset B_{R_j}(p_\infty)$;
	\item[(3')] the $\epsilon_j$-neighborhood of $\Phi_j(B_{R_j}(p_j))$ contains $B_{R_j}(p_\infty)$;
	\item[(4')] the measures $(\Phi_j)_\sharp(\mu_j)$ weakly converges to $\mu_\infty$ as $j\to\infty$, that is, for any $\phi\in C_0(X_\infty)$,
 $$\lim_{j\to\infty}\int_{X_j}\phi\circ \Phi_j{\rm d}\mu_j= \int_{X_\infty}\phi{\rm d}\mu_\infty.$$
\end{itemize}
 \end{prop}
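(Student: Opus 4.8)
The statement is an equivalence, so the plan is to prove the two implications separately, noting in advance that the length-space hypothesis is needed only for the forward direction. The implication from $(1')$--$(4')$ to the convergence in Definition~\ref{definition2.8} is the easy one. Given the maps $\Phi_j$ and the sequences $R_j\nearrow\infty$, $\epsilon_j\searrow0$, I would fix $\epsilon,R>0$, choose $N(\epsilon,R)$ so large that $R_j>R$ and $\epsilon_j<\epsilon/2$ for all $j\gs N(\epsilon,R)$, and simply set $\Phi_j^{\epsilon,R}:=\Phi_j|_{B_R(p_j)}$ for such $j$. Then $(1)$ and $(2)$ of Definition~\ref{definition2.8} are immediate from $(1')$ and $(2')$, since $B_R(p_j)\subset B_{R_j}(p_j)$. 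For $(3)$, given $z\in B_{R-\epsilon}(p_\infty)$ one uses $(3')$ to produce $x\in B_{R_j}(p_j)$ with $d_\infty(\Phi_j(x),z)<\epsilon_j$; then $d_j(x,p_j)\ls d_\infty(\Phi_j(x),p_\infty)+\epsilon_j< d_\infty(z,p_\infty)+2\epsilon_j< R-\epsilon+2\epsilon_j<R$, so $x\in B_R(p_j)$ and $z$ lies in the $\epsilon$-neighbourhood of $\Phi_j^{\epsilon,R}(B_R(p_j))$. Finally $(4)$ for almost every $R$ follows from the weak convergence in $(4')$ together with $\mu_\infty(\partial B_R(p_\infty))=0$ for all but countably many $R$: for such $R$ a standard portmanteau argument, using $(2')$ to bound the $\mu_j$-mass that $\Phi_j$ carries across a thin collar of $\partial B_R$, gives Levi-metric convergence of $(\Phi_j)_\sharp(\mu_j|_{B_R(p_j)})$ to $\mu_\infty|_{B_R(p_\infty)}$.

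For the converse I would run a diagonal argument. For each $n\in\mathbb N$ pick a radius $\rho_n\in(n,n+1)$ for which the Levi-metric clause of condition~$(4)$ of Definition~\ref{definition2.8} is valid (almost every radius is), with $\rho_n$ increasing, and apply Definition~\ref{definition2.8} with $\epsilon=1/n$ and $R=\rho_n$ to obtain a threshold $N_n$ and, for $j\gs N_n$, a Borel $(1/n)$-mGH approximation $\Psi_{j,n}:B_{\rho_n}(p_j)\to X_\infty$. Choose $j_1<j_2<\cdots$ with $j_n\gs\max_{1\ls k\ls n}N_k$, and for $j\in[j_n,j_{n+1})$ let $\Phi_j$ coincide with $\Psi_{j,n}$ on $B_{\rho_n}(p_j)$ and extend it to a Borel map of $X_j$ carrying $X_j\setminus B_{\rho_n}(p_j)$ into $X_\infty\setminus B_{\rho_n}(p_\infty)$ while keeping $(\Phi_j)_\sharp\mu_j$ a Radon measure. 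Such an extension exists: if $X_\infty$ is unbounded there is room at infinity to place the surplus mass in a locally finite way; if ${\rm diam}(X_\infty)<\infty$, then for all large $n$ the length-space hypothesis forces $X_j=B_{\rho_n}(p_j)$ (a point of $X_j$ at distance $>{\rm diam}(X_\infty)+1$ from $p_j$ would, by the intermediate-value property of a length metric and condition~$(2)$, be mapped outside $X_\infty$ once $1/n<1$), so no extension is needed. Finally put $R_j:=\rho_n-1/n$ and $\epsilon_j:=1/n$ for $j\in[j_n,j_{n+1})$, so that $R_j\nearrow\infty$ and $\epsilon_j\searrow0$.

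It then remains to verify $(1')$--$(4')$ for these choices. Condition~$(1')$ is condition~$(1)$ for $\Psi_{j,n}$; condition~$(2')$ is condition~$(2)$ restricted to $B_{R_j}(p_j)\subset B_{\rho_n}(p_j)$; condition~$(3')$ is condition~$(3)$ read with the radii $\rho_n-1/n$ and $\rho_n$. The only delicate point here is the cosmetic matching of radii, since the metric distortion dilates balls by $\epsilon_j$: the sub-inclusions $\Phi_j(B_{R_j}(p_j))\subset B_{R_j}(p_\infty)$ and ``$B_{R_j}(p_\infty)$ lies in the $\epsilon_j$-neighbourhood of $\Phi_j(B_{R_j}(p_j))$'' can be arranged only after an $O(\epsilon_j)$ shift of $R_j$, which is harmless because $\epsilon_j\to0$, so one relabels. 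For $(4')$, given $\phi\in C_0(X_\infty)$ with ${\rm supp}(\phi)\subset B_{R_0}(p_\infty)$ and $n$ with $\rho_n>R_0+1$: the part of $X_j$ outside $B_{\rho_n}(p_j)$ contributes nothing to $\int_{X_j}\phi\circ\Phi_j\,{\rm d}\mu_j$ because its image avoids ${\rm supp}(\phi)$, while on $B_{\rho_n}(p_j)$ the Levi-metric estimate at radius $\rho_n$ gives $\int_{B_{\rho_n}(p_j)}\phi\circ\Psi_{j,n}\,{\rm d}\mu_j\to\int_{B_{\rho_n}(p_\infty)}\phi\,{\rm d}\mu_\infty=\int_{X_\infty}\phi\,{\rm d}\mu_\infty$ as $j\to\infty$; letting $n\to\infty$ afterwards yields $(4')$.

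I expect the main obstacle to be the construction in the converse implication: the approximations supplied by Definition~\ref{definition2.8} live only on balls and distort distances, so one must glue them into globally defined Borel maps of $X_j$ without destroying the weak convergence of the pushforwards, and simultaneously absorb the $O(\epsilon_j)$ mismatches of radii. This is precisely where the length-space hypothesis enters, as it forces the (un)boundedness of $X_\infty$ to propagate back to the spaces $X_j$ and thereby dictates how the surplus mass must be redistributed.
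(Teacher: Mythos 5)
The paper does not give a proof of Proposition~\ref{prop2.9}; it is presented as a recollection with a pointer to \cite[Remark 3.29]{gms15}, so there is no intrinsic argument of the authors to compare yours against. Judged on its own terms, your plan is the standard and correct one: restrict $\Phi_j$ to balls for the forward implication, diagonalize over radius and tolerance for the converse, and treat the bounded-diameter case separately via the length hypothesis. The forward direction, including the computation that pulls $z\in B_{R-\epsilon}(p_\infty)$ back to a point of $B_R(p_j)$, reads cleanly, and your bounded-diameter argument correctly isolates one role of the length hypothesis.

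There is, however, a genuine gap in the converse at exactly the point you dismiss as ``cosmetic.'' The inclusion $\Phi_j(B_{R_j}(p_j))\subset B_{R_j}(p_\infty)$ in $(2')$ cannot be achieved by an $O(\epsilon_j)$ shift of $R_j$. An $\epsilon_j$-almost-isometry sending $p_j\mapsto p_\infty$ maps $B_{R}(p_j)$ into $B_{R+\epsilon_j}(p_\infty)$ for \emph{every} radius $R$; shrinking $R_j$ shrinks the source and the target ball simultaneously, so the $\epsilon_j$ overshoot survives and no relabelling of the radius sequence alone produces the stated inclusion. To obtain $(2')$ one must modify the map rather than the radius --- for instance, post-compose $\Psi_{j,n}$ with a Borel retraction of $B_{R_j+\epsilon_j}(p_\infty)$ onto $\overline{B_{R_j}(p_\infty)}$ that moves each point by at most $\epsilon_j$ (slide the offending points toward $p_\infty$ along almost-geodesics, which is available precisely because $X_\infty$ is a length space), and then absorb the extra displacement by replacing $\epsilon_j$ with $2\epsilon_j$. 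This is a second place where the length hypothesis on $X_\infty$ is actually doing work, and your proposal as written does not supply the step. The remaining issues you note (the extension of $\Phi_j$ outside $B_{\rho_n}(p_j)$, the verification of $(4')$) are handled adequately once one observes that $(4')$ only tests against compactly supported $\phi$, so little regularity of $(\Phi_j)_\sharp\mu_j$ is required.
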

Given a sequence of points $\{x_j \in X_j\}_{j\in\mathbb N\cup\{\infty\}}$, we say that $x_j\to x_\infty$  with respect to the sequences $(\epsilon_j)$ and maps $(\Phi_j)$  if and only if $d_\infty\big(x_\infty, \Phi_j(x_j)\big)<\epsilon_j$ for all $j\in\mathbb N$.  Here both $(\epsilon_j)$ and  $(\Phi_j)$ are given in the   Proposition \ref{prop2.9}.   Below, we will sometimes write $x_j\to x_\infty$ without mention of the particular choices of  $(\epsilon_j)$ and  $(\Phi_j)$.

We refer the readers to \cite{gms15} for some other notions of convergence for pointed metric measure spaces. We also consider the convergence of functions on a sequence of converging pointed metric measure spaces.
\begin{defn}\label{definition2.10}
 Let  $(X_j,d_j,\mu_j,p_j)_{j\in\mathbb N\cup\{\infty\}}$ be a sequence of   pointed metric measure spaces. Assume that all $(X_j,d_j)$ are length spaces and that
 $$(X_j,d_j,\mu_j,p_j)\overset{pmGH}{\longrightarrow} (X_\infty,d_\infty,\mu_\infty,p_\infty) $$ with the sequences $(\epsilon_j)$ with $\epsilon_j \searrow0$ and   maps $(\Phi_j)$ as in Proposition \ref{prop2.9}.
 Let $R>0$. Suppose that $\{f_j\}_{j\in\mathbb N\cup\{\infty\}}$ is a sequence of Borel functions on $B_R(p_j)$.
It is said that:

\smallskip
\noindent
$(i)$ $f_j\rightarrow f_\infty$ over $B_R(p_j)$ at point $x_\infty\in B_R(p_\infty)$, if $f_j(x_j)\to f_\infty(x_\infty)$ for any sequence $x_j\in X_j$ such that $\Phi_j(x_j)\to x_\infty$ in $X_\infty$. Precisely,    for any $\varepsilon>0$, there exist $N(\varepsilon,x_\infty)\in\mathbb N$  and $\delta(\varepsilon,x_\infty)>0$ such that
$$\sup_{x\in B_R(p_j),\ d_\infty(\Phi_j(x),x_\infty)<\delta(\varepsilon,x_\infty)}|f_j(x)-f_\infty(x_\infty)|<\varepsilon,\quad \forall\ j\gs N(\varepsilon,x_\infty);$$
$(ii)$
 $f_j\rightarrow f_\infty$ \emph{uniformly} over $B_R(p_j)$, if for any $\varepsilon>0$ there exist $N(\varepsilon)\in\mathbb N$ and $\delta(\varepsilon)>0$ such that
$$\sup_{x\in B_R(p_j),\ y\in B_R(p_\infty),\ d_\infty(\Phi_j(x),y)<\delta(\varepsilon)}|f_j(x)-f_\infty(y)|<\varepsilon,\quad \forall\ j\gs N(\varepsilon).$$
\end{defn}
\begin{rem}\label{remark2.11} (1)
The pointwise and uniform convergence of functions defined on varying space have been given in \cite{mn14}  via an extrinsic point of view.
This definition $(i)$ is equivalent to the pointwise convergence in Definition 2.11 in \cite{mn14}. If the limit function $f_\infty$ is uniformly continuous on $B_R(p_\infty)$, then this definition $(ii)$ is equivalent to the uniform convergence in  Definition 2.11 in \cite{mn14}.

(2) It is well know (see, for example, \cite[\S 3]{cheeger-d}) that if $f_j\to f_\infty$ over $B_R(p_\infty)$ then $f_\infty$ is continuous. Indeed, it can be seen as follows. Suppose not, there exist $\{y_\alpha\}_{\alpha\in\mathbb N\cup\{\infty\}}\subset B_R(p_\infty)$ such that  $y_\alpha\to y_\infty$ as $\alpha\to\infty$ and $|f_\infty(y_\alpha)-f_\infty(y_\infty)|\gs \varepsilon_0$ for some $\varepsilon_0>0$. Fixed each $\alpha\in\mathbb N$, we can find a sequence $y_{j,\alpha}\in X_j$ such that $\Phi_j(y_{j,\alpha})$ converge to $y_\alpha$  and $f_j(y_{j,\alpha})\to f_\infty(y_\alpha)$ as $j\to\infty$.  Now for sufficiently large $j_\alpha$, $j_\alpha\gs N(\alpha,y_\alpha,\varepsilon_0),$ we have $|f_{j_\alpha}(y_{j_\alpha})-f_\infty(y_\alpha)|<\varepsilon_0/3$. By a diagonal  argument, there exists a subsequence $y_{j_\alpha,\alpha}$ converging to $y_\infty $ as $\alpha\to\infty$. Hence, we get $|f_{j_\alpha}(y_{j_\alpha})-f_\infty(y_\infty)|<\varepsilon_0/3$ for large enough $\alpha.$
This is a contradiction.
\end{rem}
We remark that the Arzela-Ascoli theorem can be generalized to the case where the functions live on different spaces (see, for example, \cite{lv09} or Proposition 2.12 in \cite{mn14}). We also need the following lemma:
\begin{lem}[{\cite[Lemma 10.7]{che99}}]\label{lemma2.12}
Let $R>0$ and let $(X_j,d_j,\mu_j,p_j)_{j=1,2}$ be two  pointed metric measure spaces with $RCD^*(K,N)$ for some $K\in\mathbb R$ and  $N\gs1$.
Assume that $d_{mGH}\big(B_R(p_1),B_R(p_2)\big)<\varepsilon$, for some  $\varepsilon>0$, with an $\varepsilon$-mGH-approximation $\Phi=\Phi^{\varepsilon,R}:B_R(p_1)\to B_R(p_2)$  (see   Definition \ref{definition2.8}).

  If  $f_1$ is a Lipschitz function on $B_R(p_1)$ with $\||\nabla f_1|\|_{L^\infty(B_R(p_1))}\ls L$, then there exists a  Lipschitz function $f_2$ on $B_R(p_2)$ such that
  \begin{align*}
	  \|f_2\circ\Phi -f_1 \|_{L^\infty(B_R(p_1))} &\ls \kappa(\varepsilon),\\[1ex]
	  \||\nabla f_2|\|_{L^\infty(B_R(p_2))}&\ls\big( L+\kappa(\varepsilon)\big),\\[1ex]
	  \int_{B_R(p_2)}|\nabla f_2|^2\du_2&\ls \int_{B_R(p_1)}|\nabla f_1|^2\du_1+\kappa(\varepsilon),
\end{align*}
where $\kappa(\varepsilon):=\kappa_{N,K,R,L}(\varepsilon)$ is a positive function, depending on  $N,K,R$ and $L$,with $\lim_{\varepsilon\to0}\kappa(\varepsilon)=0$.
\end{lem}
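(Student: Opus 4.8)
The plan is to build $f_2$ on $B_R(p_2)$ by pulling back $f_1$ through a quasi-inverse of $\Phi$ and then mollifying to restore Lipschitz regularity with controlled gradient. First I would fix an $\varepsilon$-mGH approximation $\Phi:B_R(p_1)\to B_R(p_2)$ and choose a (not necessarily continuous) map $\Psi:B_R(p_2)\to B_R(p_1)$ that is an approximate inverse, i.e.\ $d_1\big(\Psi(\Phi(x)),x\big)\ls 2\varepsilon$ and $d_2\big(\Phi(\Psi(y)),y\big)\ls 2\varepsilon$; such a $\Psi$ exists because condition $(3)$ of Definition \ref{definition2.8} says $\Phi(B_R(p_1))$ is $\varepsilon$-dense. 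Setting $\tilde f_2:=f_1\circ\Psi$, the almost-isometry property $(2)$ of $\Phi$ gives, for $y,y'\in B_R(p_2)$,
\[
|\tilde f_2(y)-\tilde f_2(y')|=|f_1(\Psi(y))-f_1(\Psi(y'))|\ls L\, d_1(\Psi(y),\Psi(y'))\ls L\big(d_2(y,y')+5\varepsilon\big),
\]
so $\tilde f_2$ is ``Lipschitz up to an additive error'', and $\|\tilde f_2\circ\Phi-f_1\|_{L^\infty}\ls 2L\varepsilon$.

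Next I would upgrade $\tilde f_2$ to a genuinely Lipschitz function by a McShane-type sup-convolution at scale $\delta=\delta(\varepsilon)\to 0$, for instance
\[
f_2(y):=\inf_{z\in B_R(p_2)}\Big(\tilde f_2(z)+(L+\eta)\,d_2(y,z)\Big)
\]
for a suitable small $\eta=\eta(\varepsilon)\to0$; one checks in the standard way that $f_2$ is $(L+\eta)$-Lipschitz on $B_R(p_2)$ and that $\|f_2-\tilde f_2\|_{L^\infty}\ls C L\varepsilon$, using the additive-error Lipschitz bound above. Combining with the previous estimate yields $\|f_2\circ\Phi-f_1\|_{L^\infty(B_R(p_1))}\ls\kappa(\varepsilon)$ and, since on an $RCD^*(K,N)$-space Lipschitz functions satisfy $|\nabla f_2|={\rm Lip}f_2$ a.e.\ with ${\rm Lip}f_2\ls{\rm Lip}(\text{Lipschitz constant})$, also $\||\nabla f_2|\|_{L^\infty}\ls L+\kappa(\varepsilon)$ — this gives the first two conclusions.

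For the energy inequality $\int_{B_R(p_2)}|\nabla f_2|^2\,\du_2\ls\int_{B_R(p_1)}|\nabla f_1|^2\,\du_1+\kappa(\varepsilon)$ one cannot merely use the crude bound $|\nabla f_2|\ls L+\kappa(\varepsilon)$, since that only gives $\le (L+\kappa)^2\mu_2(B_R(p_2))$; I expect this to be the main obstacle. The idea is that, away from a small set, $f_2$ is essentially the pullback of $f_1$ along an almost-isometry, so its pointwise Lipschitz constant should be close to that of $f_1$ at the corresponding point. Concretely I would: (a) use the measure convergence $(\Phi_j)_\sharp\mu_1\to\mu_2$ in the Levi metric (condition $(4)$) together with the local doubling and Poincaré inequalities from Section 2 to transfer integrals of ${\rm Lip}^2$ across the two spaces up to error $\kappa(\varepsilon)$; (b) invoke the lower semicontinuity/stability of the Cheeger energy under mGH-convergence (which ultimately rests on the Mosco-convergence theory of \cite{gms15,ags14} cited in the paper) to conclude that the mollification at scale $\delta(\varepsilon)$ does not increase the energy by more than $\kappa(\varepsilon)$. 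Running the mollification parameter $\eta,\delta\to0$ at a rate slow compared with $\varepsilon\to0$ then produces a single error function $\kappa(\varepsilon)=\kappa_{N,K,R,L}(\varepsilon)$ with $\kappa(\varepsilon)\to0$, which is exactly the claim; since this is Cheeger's \cite[Lemma 10.7]{che99} I would simply cite that argument for the details of step (b).
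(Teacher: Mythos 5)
The paper does not give a proof of this lemma; it simply cites Cheeger \cite[Lemma 10.7]{che99}, so you are effectively being asked to reconstruct Cheeger's argument. Your inf-convolution construction correctly delivers the first two conclusions: the quasi-inverse $\Psi$ and the McShane-type envelope $f_2(y)=\inf_z\big(\tilde f_2(z)+(L+\eta)d_2(y,z)\big)$ give $\|f_2\circ\Phi-f_1\|_\infty\ls\kappa(\varepsilon)$ and $\|\,{\rm Lip}\,f_2\|_\infty\ls L+\eta$, and on an $RCD^*$ space $|\nabla f_2|={\rm Lip}f_2$ a.e. So far so good.

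The genuine gap is exactly where you flag it: the energy inequality. Your step (b) cannot work as stated. Lower semicontinuity of the Cheeger energy under mGH convergence (the content of Lemma \ref{lemma2.13}, i.e.\ the $\Gamma$-$\liminf$ inequality) bounds $\int|\nabla f_2|^2\du_2$ \emph{from below} by $\int|\nabla f_1|^2\du_1$, which is the opposite of what you need; and the $\Gamma$-$\limsup$/recovery-sequence half of Mosco convergence in \cite{gms15} is \emph{precisely} the statement you are trying to prove — invoking it here is circular, since that theory is built on lemmas of this type. Moreover the inf-convolution does not give any \emph{local} control on ${\rm Lip}f_2$: at a point $y$ where the infimum is achieved at some $z^*\ne y$ (which happens on a set that can be large, because $\tilde f_2=f_1\circ\Psi$ violates the $L$-Lipschitz bound by $O(L\varepsilon)$ essentially everywhere $\Psi$ oscillates), $f_2$ locally agrees with a cone function of slope $L+\eta$, so ${\rm Lip}f_2(y)\approx L+\eta$ regardless of ${\rm Lip}f_1(\Psi(y))$. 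Thus the pointwise identity $|\nabla f_2|\approx|\nabla f_1|\circ\Psi$, which would make your step (a) go through, is simply not available from this construction. What Cheeger's argument actually does (and what the descendants in Kuwae--Shioya \cite{ks03} and Gigli--Mondino--Savar\'e \cite{gms15} do) is quantify the Cheeger energy by a \emph{discrete} or Korevaar--Schoen-type energy at an intermediate scale $\delta$ with $\varepsilon\ll\delta\ll1$: the doubling and Poincar\'e inequalities give a two-sided comparison between the Cheeger energy of a Lipschitz function and a difference-quotient energy over a $\delta$-net, the $\delta$-scale difference quotients are transferred between $B_R(p_1)$ and $B_R(p_2)$ using that $\Phi$ distorts distances by $O(\varepsilon)\ll\delta$ and the Levi convergence of measures, and one then lets $\delta\to0$ slowly relative to $\varepsilon$. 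That discretization is the missing ingredient; without it, the pointwise $L^\infty$ gradient bound alone only gives $\int|\nabla f_2|^2\ls(L+\kappa)^2\mu_2(B_R(p_2))$, which is not the assertion.
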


The lower semi-continuity of  Dirichlet energy on   converging  spaces is given in \cite[Proposition 2.13]{mn14} and \cite[Theorem III]{gms15}. The following special case is enough for our purpose in this paper.

\begin{lem}[Lower semi-continuity of the energy]\label{lemma2.13}
Let $R>0$.
Let $(X_j,d_j,\mu_j,p_j)_{j\in\mathbb N\cup\{\infty\}}$  be a sequence of   pointed metric measure spaces. Assume that all $(X_j,d_j,\mu_j)$   satisfy $RCD^*(K,N)$  for some $K\in \mathbb R$ and $N\gs1$ and that $$ (X_j,d_j,\mu_j,p_j)\overset{pmGH}{\longrightarrow} (X_\infty,d_\infty,\mu_\infty,p_\infty).$$

If  $\{f_j\}_{j\in\mathbb N\cap\{\infty\}}$ is a sequence of  Lipschitz functions on $B_R(p_j)$, for each $j\in\mathbb N\cup\{\infty\}$, and $f_j \rightarrow f_\infty$ uniformly over $B_R(p_j)$, and if there exists $C_1$ such that
\begin{equation}\label{equa2.10}
\sup_{j\in \mathbb N}\|\nabla f_j\|_{L^\infty(B_R(p_j))}\ls C_1,
\end{equation}
then we have
$$ \liminf_{j\to\infty} \int_{B_R(p_j)}|\nabla f_j|^2\du_j\gs \int_{B_R(p_\infty)}|\nabla f_\infty|^2\du_\infty.$$
\end{lem}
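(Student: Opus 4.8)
The plan is to establish lower semicontinuity of the Cheeger energy along $pmGH$-converging spaces by first localizing the problem to a compact sub-ball. Fix $0 < r < R$ and $\phi \in \mathrm{Lip}_0(B_r(p_\infty))$ a fixed cutoff, or more simply, work on $B_{r}(p_\infty)$ for $r<R$ so that we stay strictly inside $B_R(p_j)$. The key observation is that, because $f_j \to f_\infty$ uniformly over $B_R(p_j)$ with a uniform Lipschitz bound $C_1$, the functions $f_j$ form a precompact family in a suitable sense. The first step is to use the identification ${\rm Lip}f = |\nabla f|$ a.e. for Lipschitz functions (stated in the excerpt) to recast both sides in terms of pointwise Lipschitz constants, and to recall that on $RCD^*(K,N)$-spaces the Cheeger energy is quadratic and strongly local, so the minimal weak upper gradient satisfies the usual calculus rules.

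The main argument I would carry out uses a blow-up / mollification of the energy in the spirit of Cheeger's work and the $\Gamma$-convergence framework of Gigli--Mondino--Savar\'e. Concretely, recall that on a doubling Poincar\'e space the Cheeger energy admits the equivalent expression via the asymptotic behavior of difference quotients, which is stable under $pmGH$-convergence: quantities of the form
\begin{equation*}
\int_{B_r(p)}\frac{1}{\mu(B_\rho(x))}\int_{B_\rho(x)}\frac{|f(x)-f(y)|^2}{\rho^2}\,{\rm d}\mu(y)\,{\rm d}\mu(x)
\end{equation*}
converge as $j\to\infty$ for fixed $\rho$ (using $(4')$ of Proposition \ref{prop2.9} and uniform continuity/equiboundedness of the $f_j$), while for each fixed space the liminf as $\rho\to 0$ of such quantities is controlled from above by a dimensional constant times $\int|\nabla f|^2$, and conversely $\int|\nabla f|^2$ is recovered (up to the same constants) as a limit. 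A cleaner route, which I would prefer to present, is to invoke Lemma \ref{lemma2.12} in the ``reverse'' direction: for $\varepsilon_j$-mGH approximations $\Phi_j : B_R(p_j) \to B_R(p_\infty)$, apply the lemma with the roles of source and target swapped to obtain, from $f_\infty$, Lipschitz functions $g_j$ on $B_R(p_j)$ with $g_j \to f_\infty$ uniformly, $\|\nabla g_j\|_\infty \le \|\nabla f_\infty\|_\infty + \kappa(\varepsilon_j)$, and $\int_{B_R(p_j)}|\nabla g_j|^2 \,{\rm d}\mu_j \le \int_{B_R(p_\infty)}|\nabla f_\infty|^2 \,{\rm d}\mu_\infty + \kappa(\varepsilon_j)$. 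But this gives an upper approximation of the limit, not the desired liminf inequality, so instead I would work with the energy functional directly.

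Therefore the cleanest correct strategy is the Mosco/$\Gamma$-$\liminf$ route: one shows that the family of functionals $\mathscr E_j(f) = \int_{B_R(p_j)}|\nabla f|^2\,{\rm d}\mu_j$, viewed via the maps $\Phi_j$, $\Gamma$-converges (in the appropriate topology of uniform convergence) to $\mathscr E_\infty$, and the $\liminf$-inequality of $\Gamma$-convergence is exactly the assertion. To prove the $\liminf$-inequality I would: (i) fix $\rho>0$ small and a Lipschitz ``discrete gradient'' surrogate $E_\rho(f) := c_N\int_{B_r}\fint_{B_\rho(x)}\frac{|f(x)-f(y)|^2}{\rho^2}\,{\rm d}\mu(y)\,{\rm d}\mu(x)$; (ii) show $\lim_{j\to\infty}E_\rho(f_j) = E_\rho(f_\infty)$ using uniform convergence of $f_j$, the equicontinuity from the Lipschitz bound, and weak convergence of $\mu_j$ together with the local measure-doubling bound \eqref{equa2.4} (which gives uniform control of $\mu_j(B_\rho(x))$ from below after passing to the limit); (iii) bound $E_\rho(f_j) \ge (1 - \omega(\rho))\int_{B_r(p_j)}|\nabla f_j|^2\,{\rm d}\mu_j$ for an error $\omega(\rho)\to 0$ uniform in $j$ — this is the local Poincar\'e/doubling self-improvement step, and it uses the stated equivalence of Sobolev norms and the telescoping argument of Cheeger; (iv) pass $j\to\infty$ then $\rho\to 0$ then $r\to R$. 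I expect the genuine obstacle to be step (iii): obtaining the lower bound $E_\rho(f) \ge (1-\omega(\rho))\int|\nabla f|^2$ with an error that is \emph{uniform across the sequence of spaces}, since it requires the constants in doubling and Poincar\'e to be uniform — which they are, by \eqref{equa2.4} and the stated uniform dependence of $C_P(N,K,R)$ on $N$ and $\sqrt{|K\wedge 0|}R$ alone — and then tracking that the same estimate survives when $f$ has only a uniform $L^\infty$-gradient bound rather than being, say, a test function; one handles the boundary-of-ball issue by the restriction to $B_r(p_\infty)$ with $r < R$ so that all difference quotients at scale $\rho < R - r$ stay inside the domain where the maps $\Phi_j$ are defined.
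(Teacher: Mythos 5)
Your proposal does not land; it contains a genuine gap, and along the way it misses the short argument the paper actually uses — one you nearly found.

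You correctly identify Lemma~\ref{lemma2.12} as the right tool, but you only consider applying it in the direction $B_R(p_\infty)\to B_R(p_j)$ (lifting $f_\infty$ to functions $g_j$ on $X_j$), observe that this gives an upper approximation rather than a $\liminf$ inequality, and then abandon it. But the lemma is symmetric in the two spaces: apply it in the \emph{other} direction, with $p_1=p_j$ and $p_2=p_\infty$, to lift each $f_j$ (using the uniform Lipschitz bound $C_1$ as the constant $L$) to a Lipschitz $g_j$ on $B_R(p_\infty)$ with $\|g_j\circ\Phi_j-f_j\|_{L^\infty}\ls\kappa(\epsilon_j)$ and $\int_{B_R(p_\infty)}|\nabla g_j|^2\,{\rm d}\mu_\infty\ls\int_{B_R(p_j)}|\nabla f_j|^2\,{\rm d}\mu_j+\kappa(\epsilon_j)$. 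Since $f_j\to f_\infty$ uniformly over $B_R(p_j)$, the estimate $\|g_j\circ\Phi_j-f_j\|_{L^\infty}\ls\kappa(\epsilon_j)$ forces $g_j\to f_\infty$ in $L^\infty(B_R(p_\infty))$. Now the problem lives entirely on the \emph{fixed} space $X_\infty$, and the classical lower semicontinuity of Cheeger energy on a single metric measure space (\cite[Theorem~2.5]{che99}) gives $\liminf_j\int|\nabla g_j|^2\,{\rm d}\mu_\infty\gs\int|\nabla f_\infty|^2\,{\rm d}\mu_\infty$, which combined with the energy transfer inequality finishes the proof. No $\Gamma$-convergence machinery is needed.

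The $\Gamma$-convergence route you sketch instead breaks down at your step (iii). The inequality $E_\rho(f)\gs(1-\omega(\rho))\int|\nabla f|^2$ with $\omega(\rho)\to 0$ uniformly across the sequence is not a consequence of doubling and Poincar\'e alone: on a general PI space one has two-sided comparability $c_1\int|\nabla f|^2\ls\liminf_\rho E_\rho(f)$ and $\limsup_\rho E_\rho(f)\ls c_2\int|\nabla f|^2$ with $c_1<c_2$ depending on the space, and the matching of the two constants (asymptotic equality as $\rho\to0$, with the correct normalization $c_N$) is a deep structural fact about RCD spaces tied to the Euclidean nature of tangent cones and the identification of Korevaar--Schoen-type energies with the Cheeger energy. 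You assert it but supply no argument, and in any case such an argument would be far heavier than the statement being proved. You also do not address how to make the $\rho\to0$ and $j\to\infty$ limits commute in the presence of the $\rho$-dependent error $\omega(\rho)$, which would require an explicit uniform-in-$j$ rate. Replacing that entire machinery with the forward application of Lemma~\ref{lemma2.12} plus Cheeger's theorem on $X_\infty$ removes both difficulties.
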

\begin{proof}
For completeness, we sketch a proof.

By using Lemma \ref{lemma2.12} to each $f_j$, we can find a Lipschitz function $g_j$ on $B_R(p_\infty)$ such that
 \begin{equation*}
  \|g_j\circ\Phi_j -f_j \|_{L^\infty(B_R(p_j))}\ls \kappa(\epsilon_j),
  \end{equation*}
and
\begin{equation*}
  \int_{B_R(p_\infty)}|\nabla g_j|^2\du_\infty\ls \int_{B_R(p_j)}|\nabla f_j|^2\du_j+\kappa(\epsilon_j),
\end{equation*}
where $(\Phi_j),(\epsilon_j)$ are given in Proposition \ref{prop2.9} and $\kappa(\epsilon_j):=\kappa_{N,K,R,C_1}(\epsilon_j)\to 0$ as $\epsilon_j\to0$, and where the constant $C_1$ is in (\ref{equa2.10}).  Then, we get that
$g_j\to f_\infty$ in $L^\infty(B_R(p_\infty))$  and that
\begin{equation*}
\liminf_{j\to\infty} \int_{B_R(p_j)}|\nabla f_j|^2\du_j\gs \liminf_{j\to\infty} \int_{B_R(p_\infty)}|\nabla g_j|^2\du_\infty.
\end{equation*}
Now the assertion follows, by  the lower semi-continuity of energy on a fixed space, see \cite[Theorem 2.5]{che99}.
\end{proof}

\section{The converge of Dirichlet heat kernels}

 In this section, we will discuss the convergence of  the local Dirichlet heat kernels on different  pointed metric measure spaces.

\subsection{Convergence of functions living on pmGH-converging spaces}

We fix a sequence of  pointed metric measure spaces $(X_j,d_j,\mu_j,p_j)_{j\in\mathbb N\cup\{\infty\}}$ such that
$$ (X_j,d_j,\mu_j,p_j)\overset{pmGH}{\longrightarrow} (X_\infty,d_\infty,\mu_\infty,p_\infty).$$
Throughout of this subsection,  we always assume that, for each $j\in\mathbb N$, $(X_j,d_j,\mu_j)$  satisfies $RCD^*(K,N)$ for some $K\in\mathbb R$ and   $N\gs1$. Then the limit space $(X_\infty,d_\infty,\mu_\infty)$ does so, by the stability of the $RCD^*$-condition under $pmGH$-convergence.

Let us first introduce the notions of $L^2$-convergence and  $H^1$-convergence for functions living on varying spaces $X_j$. We will adapt an intrinsic point of view for the definitions, similar as in \cite{ding02,ks03,hon11} .
We refer also readers to \cite{gms15,ah16} for some similar concepts of convergence via an extrinsic point of view.
\begin{defn} \label{def3.1}
Let $R>0$.
\begin{itemize}
\item[(1)]  Suppose that $\{f_j\}_j\in L^2(B_R(p_j))$  for each $j\in\mathbb N\cup\{\infty\}$.  We say that $f_j\to f_\infty$ in $L^2(B_R(p_j))$ if  we have   $f_j\to f_\infty$  over $B_R(p_j)$  $\mu_\infty$-a.e. (in the sense that $f_j\to \hat f$ for some $\hat f$ with $f_\infty(x)=\hat f(x)$ $\mu_\infty$-a.e. $x\in B_R(p_\infty)$), and if
\begin{equation*}
\lim_{j\to \infty}\int_{ B_R(p_j)}|f_j|^2\du_j=\int_{ B_R(p_\infty)}|f_\infty|^2\du_\infty.
\end{equation*}
\item[(2)]  Suppose that $\{f_j\}_j\in H^1(B_R(p_j))\ \big(:=W^{1,2}(B_R(p_j))\big)$  for each $j\in\mathbb N\cup\{\infty\}$. We say that $f_j\to f_\infty$ in $H^{1}(B_R(p_j))$ if  it holds $f_j\to f_\infty$ in $L^2(B_R(p_j))$  and
\begin{equation*}
\lim_{j\to\infty}\int_{B_R(p_j)}|\nabla f_j|^2\du_j=\int_{B_R(p_\infty)}|\nabla f_\infty|^2\du_\infty.
\end{equation*}
\end{itemize}
 \end{defn}

It is not hard to see  that  if $f_j\to f_\infty$ in $L^2(B_R(p_j))$ in the above Definition~\ref{def3.1} (i), then their zero extensions $\tilde{f}_j$ (that is, $\tilde{f}_j=f_j$ in $B_R(p_j)$ and $\tilde{f}_j=0$ in $X_j\backslash B_R(p_j)$) strongly $L^2$-converge to $\tilde{f}_\infty$ in the sense of \cite{gms15} (see also \cite{ah16}). Indeed, by using  the weak compactness of $\{\tilde{f}_j\}$ in $L^2(X_j)$  (see, page 1115 on \cite{gms15}),  we get that $\tilde{f}_j$ weakly $L^2$-converge to $\tilde{f}_\infty$ in the sense of \cite{gms15}.  From the Definition~\ref{def3.1} (i), we have also that $\|\tilde{f}_j\|_{L^2(X_j)}\to \|\tilde{f}_\infty\|_{L^2(X_\infty)}.$

Similar as in the case of functions on a fixed space, it is available that the dominated convergence theorem for functions living on $pmGH$-converging spaces. In particular, if $\{f_j\}_{j\in\mathbb N\cup\{\infty\}}$ is a sequence  of functions such that $f_j\to f_\infty$  over $B_R(p_j)$ at $\mu_\infty$-almost all points in $B_R(p_\infty)$ and that they are bounded uniformly, then  $f_j\to f_\infty$ in $L^2(B_R(p_j))$. For convenient, we will give some detailed information, in the Appendix A, for the dominated convergence theorem and Fatou's lemma for functions living on varying spaces.

Let us sum up some basis properties on these convergence.

\begin{prop}\label{prop3.2}
  Let  $R>0$.
\begin{itemize}
\item[(i)] Assume that  $\partial B_R(p_\infty)=  \partial\big(X_\infty\backslash \overline{B_R(p_\infty)}\big)$. If $g_j\in H^1_0(B_R(p_j))$ with $\||\nabla g_j|\|_2\ls C$ for some $C>0$, for all $j\in\mathbb N$, and if $g_j\to g_\infty$ in\ $L^2(B_R(p_j))$, then we have $g_\infty\in  H^1_0(B_R(p_\infty)).$
\item[(ii)]  Let $\{f_j\}_{j\in\mathbb N\cap\{\infty\}}$ be a  sequence of Lipschitz functions on $B_R(p_j)$ such that $f_j\to f_\infty$ uniformly over $B_R(p_j)$. Suppose that
 \begin{equation}\label{equa3.1}
 \sup_{j\in\mathbb N}\|\nabla f_j\|_{L^\infty(B_R(p_j))}\ls C_1
 \end{equation}
  for some constant $C_1>0$. Then for any $g_\infty\in  H^1(B_R(p_\infty))$  with $g_\infty-f_\infty\in H^1_0(B_R(p_\infty))$, there exists a sequence of functions $\{g_j\}_{j\in\mathbb N}$ such that
  $g_j-f_j\in H^1_0(B_R(p_j))$, for each $j\in\mathbb N$, and that   $g_j\to g_\infty$ in\ $H^1(B_R(p_j))$.
\end{itemize}

In particular, by taking $f_j\equiv0$, we conclude that: given any $g_\infty\in\  H^1_0(B_R(p_\infty)),$  there exists  a sequence of functions $\{g_j\}_{j\in\mathbb N}$ such that   $g_j\in H^1_0(B_R(p_j))$, for each $j\in\mathbb N$, and that $g_j\to g_\infty$ in $H^1(B_R(p_j))$.
\end{prop}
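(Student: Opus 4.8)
The plan is to handle the two parts separately, since they go in opposite directions. For part (i), the goal is to pass a membership in $H^1_0$ to the limit, so I would use the intrinsic description $H^1_0(B_R(p_\infty)) = H^{1,2}_0(B_R(p_\infty))$ (closure of $Lip_0$ in the $W^{1,2}$-norm) together with the capacity-theoretic characterization from Definition \ref{definition2.4} and Corollary \ref{corollary2.6}. First I would observe that the hypothesis $g_j \to g_\infty$ in $L^2(B_R(p_j))$ together with the uniform energy bound $\||\nabla g_j|\|_2 \le C$ forces, via the lower semicontinuity of the energy (Lemma \ref{lemma2.13}, applied after a truncation/Lipschitz-approximation or directly via the extrinsic $L^2$-weak convergence discussed right after Definition \ref{def3.1}), that $g_\infty \in H^1(B_R(p_\infty))$ with $\||\nabla g_\infty|\|_2 \le C$. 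The key point is then to show $g_\infty$ vanishes appropriately on the complement. I would extend each $g_j$ by zero to all of $X_j$; as noted in the excerpt these zero-extensions $\tilde g_j$ converge strongly in $L^2$ to $\tilde g_\infty$ in the sense of \cite{gms15}, and since $g_j \in H^1_0(B_R(p_j))$ the extensions $\tilde g_j$ lie in $H^1(X_j)$ with $\||\nabla \tilde g_j|\|_{L^2(X_j)} = \||\nabla g_j|\|_{L^2(B_R(p_j))} \le C$. Hence $\tilde g_\infty \in H^1(X_\infty)$, and $\tilde g_\infty = 0$ $\mu_\infty$-a.e.\ on $X_\infty \setminus \overline{B_R(p_\infty)}$, so by the quasi-continuity statement of \cite[Theorem 3.2]{kkm00} it vanishes $2$-q.e.\ there, and then by Lemma \ref{lemma2.5} it vanishes $2$-q.e.\ on $\overline{X_\infty \setminus \overline{B_R(p_\infty)}}$; invoking the hypothesis $\partial B_R(p_\infty) = \partial(X_\infty \setminus \overline{B_R(p_\infty)})$ exactly as in the proof of Corollary \ref{corollary2.6}, this set equals $X_\infty \setminus B_R(p_\infty)$, so $g_\infty \in H^1_0(B_R(p_\infty))$ by Definition \ref{definition2.4}.

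For part (ii), the plan is a diagonal/approximation argument. Given $g_\infty \in H^1(B_R(p_\infty))$ with $g_\infty - f_\infty \in H^1_0(B_R(p_\infty))$, first reduce to the case where $g_\infty - f_\infty$ is itself in $Lip_0(B_R(p_\infty))$: by definition of $H^{1,2}_0$ there is a sequence $h^{(k)} \in Lip_0(B_R(p_\infty))$ with $h^{(k)} \to g_\infty - f_\infty$ in $W^{1,2}$, and if we can realize each $f_\infty + h^{(k)}$ as an $H^1$-limit of suitable $g_j^{(k)}$ with $g_j^{(k)} - f_j \in H^1_0(B_R(p_j))$, a standard diagonal extraction produces the desired sequence for $g_\infty$. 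So assume $g_\infty = f_\infty + h$ with $h \in Lip_0(B_R(p_\infty))$, say $d(\mathrm{supp}\,h, \partial B_R(p_\infty)) = 2\delta > 0$ and $\||\nabla h|\|_\infty \le L$. Now apply Lemma \ref{lemma2.12} (in the reverse direction, from the limit space to $X_j$, which is legitimate since $mGH$-closeness is symmetric) to $h$ to obtain Lipschitz functions $\hat h_j$ on $B_R(p_j)$ with $\|\hat h_j - h\circ\Phi_j^{-1}\|_\infty \le \kappa(\epsilon_j)$ (interpreting the composition via the approximate inverse of $\Phi_j$), with $\||\nabla \hat h_j|\|_\infty \le L + \kappa(\epsilon_j)$ and $\int |\nabla \hat h_j|^2\,d\mu_j \le \int |\nabla h|^2\,d\mu_\infty + \kappa(\epsilon_j)$; by the uniform $L^\infty$-closeness and the positive distance $2\delta$ of $\mathrm{supp}\,h$ to $\partial B_R(p_\infty)$, after multiplying $\hat h_j$ by a fixed cutoff or simply for $j$ large one checks $\hat h_j \in Lip_0(B_R(p_j)) \subset H^1_0(B_R(p_j))$ and $\hat h_j \to h$ uniformly over $B_R(p_j)$ with the energies converging (the $\liminf$ inequality from Lemma \ref{lemma2.13} combined with the $\limsup$ inequality from Lemma \ref{lemma2.12} gives $\int |\nabla \hat h_j|^2\,d\mu_j \to \int |\nabla h|^2\,d\mu_\infty$, hence $\hat h_j \to h$ in $H^1(B_R(p_j))$). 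Setting $g_j := f_j + \hat h_j$, we get $g_j - f_j = \hat h_j \in H^1_0(B_R(p_j))$, and $g_j \to g_\infty = f_\infty + h$ in $H^1$: the $L^2$-convergence is immediate from uniform convergence and the doubling bound (\ref{equa2.4}) (which controls $\mu_j(B_R(p_j))$ uniformly), and for the gradients one expands $|\nabla g_j|^2 = |\nabla f_j|^2 + 2\langle \nabla f_j, \nabla \hat h_j\rangle + |\nabla \hat h_j|^2$; the cross term requires a little care but can be handled since both $f_j \to f_\infty$ and $\hat h_j \to h$ in $H^1$ and the gradients are uniformly $L^\infty$-bounded, so weak-$L^2$ convergence of $\nabla f_j$ against the (strongly convergent, up to the standard identification) $\nabla \hat h_j$ gives $\int \langle \nabla f_j, \nabla \hat h_j\rangle\,d\mu_j \to \int \langle \nabla f_\infty, \nabla h\rangle\,d\mu_\infty$; alternatively one bypasses this by proving $g_j \to g_\infty$ in $H^1$ directly via Lemma \ref{lemma2.12} applied to $g_\infty$ itself and a separate lower-semicontinuity argument, which avoids expanding the square. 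The final "in particular" statement is the case $f_j \equiv f_\infty \equiv 0$, where the reduction to $h \in Lip_0$ and the reverse Lemma \ref{lemma2.12} argument apply verbatim.

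The main obstacle, I expect, is part (ii): specifically, the bookkeeping around Lemma \ref{lemma2.12} used in the "wrong" direction (limit space to $X_j$) and ensuring that the approximating functions genuinely land in $H^1_0(B_R(p_j))$ rather than merely in $H^1$ of a slightly smaller ball — this is exactly the kind of subtlety flagged by Example \ref{example1.5} and addressed by the hypothesis in part (i), and it is why part (ii) takes $f_\infty$ as the prescribed trace and only needs $g_\infty - f_\infty \in H^1_0$ of the \emph{open} ball $B_R(p_\infty)$. Keeping the supports uniformly away from the boundary (via the explicit $\delta$ coming from $h \in Lip_0$) is what makes this work, and it is the step I would write out most carefully. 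A secondary technical point is making the cross-term argument in the energy expansion rigorous on varying spaces; the cleanest route is probably to avoid it entirely by using $g_\infty$ directly in Lemma \ref{lemma2.12} and then matching the resulting $\limsup$ against the $\liminf$ from Lemma \ref{lemma2.13}.
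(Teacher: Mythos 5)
Part (i) of your proposal is correct and follows the paper's argument essentially verbatim: extend by zero, use the $L^2$-strong convergence of the extensions together with the uniform energy bound to get $\tilde g_\infty\in H^1(X_\infty)$ vanishing a.e.\ outside $\overline{B_R(p_\infty)}$, upgrade to $2$-q.e.\ vanishing via \cite[Theorem 3.2]{kkm00}, and finish with Lemma~\ref{lemma2.5} and the boundary hypothesis. You inline the proof of Corollary~\ref{corollary2.6} rather than cite it, which is a cosmetic difference only.

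Part (ii) is where your proposal diverges from the paper and where it has a genuine gap. Your primary construction sets $g_j:=f_j+\hat h_j$ with $\hat h_j$ a lift of $h:=g_\infty-f_\infty$, and then you expand $|\nabla g_j|^2=|\nabla f_j|^2+2\langle\nabla f_j,\nabla\hat h_j\rangle+|\nabla\hat h_j|^2$ and claim this converges because ``both $f_j\to f_\infty$ and $\hat h_j\to h$ in $H^1$.'' But the hypotheses of (ii) do \emph{not} give $f_j\to f_\infty$ in $H^1$; they only give uniform convergence plus a uniform $L^\infty$-gradient bound, and the Dirichlet energy can fail to converge under these. Concretely, take all $X_j=X_\infty=[0,10]$ with Lebesgue measure, $p_j=5$, $R=1$, $f_j(x)=j^{-1}\sin\bigl(j(x-5)\bigr)$, $f_\infty=g_\infty=0$, so $h=0$ and $\hat h_j=0$; then your $g_j=f_j$, and $\int_{B_1(5)}|\nabla g_j|^2\,\mathrm dx=\int_4^6\cos^2\bigl(j(x-5)\bigr)\,\mathrm dx\to 1\ne 0=\int|\nabla g_\infty|^2$, so $g_j\not\to g_\infty$ in $H^1$. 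Thus the first term in your expansion does not converge, and the conclusion fails for this choice of $g_j$. (In this example a valid choice is $g_j=f_j(1-\chi_j)$ with $\chi_j\in Lip_0$ equal to $1$ away from a shrinking boundary layer of width $\sim 1/j$; the key is that the boundary layer where $g_j$ must agree with $f_j$ has measure tending to zero, which kills the unwanted $|\nabla f_j|^2$ contribution.) Your secondary $L^\infty$-cutoff suggestion for producing $\hat h_j\in Lip_0$ is fine as far as it goes, but it does not repair the energy problem, which is caused by the additive $f_j$ term rather than by the cutoff.

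The paper's construction is different: it lifts $g_\infty$ itself (not $h$) to $\hat g_j$ via Lemma~\ref{lemma2.12}, so that $\limsup_j\|\nabla\hat g_j\|_2\le\|\nabla g_\infty\|_2$ directly, and then modifies $\hat g_j$ by the ``soft clipping'' against $f_j$ in equation~(\ref{equa3.5}) to obtain $g_j$ with $g_j-f_j\in Lip_0$, finally closing the energy identity by pairing the $\limsup$ bound~(\ref{equa3.6}) with the $\liminf$ from Lemma~\ref{lemma2.13}. You do gesture at this route in your closing remark (``bypass\ldots by using $g_\infty$ directly in Lemma~\ref{lemma2.12}''), which is indeed the right idea, but you do not supply the clipping device~(\ref{equa3.5}) that makes $g_j-f_j$ compactly supported, and that device is the crux of the paper's argument. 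I will also note, for your own interest, that the same counterexample above is delicate for the paper's step~(\ref{equa3.6}): when the set $\{|\hat g_j-f_j|\le\kappa_1(\varepsilon_j)\}$ has non-negligible measure, the contribution $\int_{\{|\hat g_j-f_j|\le\kappa_1\}}|\nabla f_j|^2$ is not controlled by $(\ref{equa3.3})$--$(\ref{equa3.4})$ alone, so the clipping must be further localized to a shrinking boundary layer to make~(\ref{equa3.6}) watertight; this is a point you should write out carefully if you take the paper's route.
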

\begin{proof}
(i). \ \ From the density of the  $Lip_0(B_R(p_j))\subset H^1_0(B_R(p_j))$, we can assume that  $g_j\in Lip_0(B_R(p_j))$ for each $j\in \mathbb N$.

Let $\tilde{g}_j$ be the zero extension of $g_j$ in $X_j$  for each $j\in \mathbb N$. Namely, $\tilde{g}_j=g_j$ in $B_R(p_j)$ and  $\tilde{g}_j=0$ in $X_j\backslash B_R(p_j)$.
Noticing that $\tilde{g}_j$ weakly $L^2$-converge  to $\tilde{g}_\infty$ in the sense of \cite{gms15} (see also \cite{ah16}) and that
 $$\||\nabla \tilde{g}_j|\|_{L^2(X_j)}= \||\nabla g_j|\|_{L^2(B_R(p_j))}\ls C,$$
we obtain that $\tilde{g}_\infty\in H^1(X_\infty)$ and that  $\tilde{g}_\infty=g_\infty$  $\mu_\infty$-a.e.   in $ B_R(p_\infty)$, and that  $\tilde{g}_\infty=0$ $\mu_\infty$-a.e. in  $X\backslash \overline{B_R(p_\infty)}$.

 Now we want to show $g_\infty\in H^1_0(\overline{B_R(p_\infty)})$.      Noting that $\tilde{g}_\infty$ is a $2$-quasi continuous function and that  $ X_\infty\backslash \overline{B_R(p_\infty)}$ is an open set, we conclude, by \cite[Theorem 3.2]{kkm00}, that $\tilde{g}_\infty=0$ $2$-q.e. in $ X_\infty\backslash \overline{B_R(p_\infty)}$.
 By using the fact that  $\tilde{g}_\infty=g_\infty$  $\mu_\infty$-a.e.   in $ B_R(p_\infty)$ and that $\mu_\infty(\partial B_R(p_\infty))=0$, we have  $\tilde{g}_\infty=g_\infty$  $\mu_\infty$-a.e.   in $\overline{ B_R(p_\infty)}$.
 Hence, by Definition~\ref{definition2.4}, we have
$g_\infty\in H^1_0(\overline{B_R(p_\infty)})$.

 At last, by using the assumption  $\partial B_R(p_\infty)=  \partial\big(X_\infty\backslash \overline{B_R(p_\infty)}\big)$ and Corollary \ref{corollary2.6}, we conclude $g_\infty\in H^1_0(B_R(p_\infty))$.

 (ii). \ \  From the density of the  $Lip_0(B_R(p_\infty))\subset H^1_0(B_R(p_\infty))$, we can assume that  $g_\infty-f_\infty\in Lip_0(B_R(p_\infty))$, and hence $g_\infty\in Lip(B_R(p_\infty))$ (since $f_\infty\in Lip(B_R(p_\infty))$. We use Lemma \ref{lemma2.12}  to lift a sequence of functions $\hat g_j\in Lip(B_R(p_j))$ so that:
  \begin{align}\label{equa3.2}
	  \|\hat g_j-g_\infty\circ \Psi_j\|_{L^\infty} &\ls \kappa(\varepsilon_j),\\[1ex]
  \label{equa3.3}
	  \||\nabla \hat g_j|\|_{L^\infty} &\ls \||\nabla g_\infty|\|_{L^\infty} +\kappa(\varepsilon_j)\ls C_{g_\infty},\\[1ex]
   \label{equa3.4}
	  \||\nabla \hat g_j|\|_2 &\ls \||\nabla g_\infty|\|_2+\kappa(\varepsilon_j),
 \end{align}
where $\kappa(\varepsilon_j)$ depends on $K,N,R$ and $\||\nabla g_\infty|\|_{L^\infty}$, and the maps $\Psi_j:\ B_R(p_\infty)\!\to\! B_R(p_j)$ are the $\varepsilon_j$-$mGH$ approximations. By (\ref{equa3.2}) and the facts that $f_j\to f_\infty$ uniformly over $B_R(p_j)$ and that $g_\infty-f_\infty\in Lip_0(B_R(p_\infty))$,  we  get
\begin{equation*}
|\hat g_j(x)-f_j(x)|\ls \kappa_1(\varepsilon_j)\quad {\rm for\ all}\ \   x \ {\rm\ is\ close\ near}\ \   \partial B_R(p_j),
\end{equation*}
for each $j\in \mathbb N$, and for some positive function $\kappa_1$ with $\lim_{t\to0}\kappa_1(t)=0$.
We shall modify $\hat g_j$ slightly to
\begin{equation}\label{equa3.5}
g_j:=
\begin{cases}
\hat g_j-\kappa_1(\varepsilon_j) & {\rm if}\ \  \hat g_j-f_j\gs \kappa_1(\varepsilon_j), \\
 f_j& {\rm if}\ \ |\hat g_j-f_j|\ls \kappa_1(\varepsilon_j), \\
\hat g_j+\kappa_1(\varepsilon_j) & {\rm if}\ \ \hat g_j-f_j\ls-\kappa_1(\varepsilon_j).
\end{cases}
\end{equation}
Then we have, for each $j\in\mathbb N$, that $g_j-f_j\in Lip_0(B_R(p_j))$ and that, by (\ref{equa3.3})--(\ref{equa3.4}) and (\ref{equa3.1}),
\begin{equation}\label{equa3.6}
 \||\nabla   g_j|\|_{L^\infty}\ls C_{g_\infty}+C_1\quad{\rm and}\quad   \limsup_{j\to\infty}\||\nabla   g_j|\|_2\ls \||\nabla g_\infty|\|_2.
 \end{equation}
From (\ref{equa3.5}), we have
$$\|g_j-\hat g_j\|_{L^\infty}\ls  \kappa_1(\varepsilon_j).$$
The combination of this and (\ref{equa3.2}) implies   $g_j\to g_\infty$ uniformly over $B_R(p_j)$.

 By using (\ref{equa3.6}) and the lower semi-continuity of energy, Lemma \ref{lemma2.13}, we conclude that  $\lim_{j\to\infty}\||\nabla   g_j|\|_2= \||\nabla g_\infty|\|_2$. Thus we  finish the proof of (ii).
  The proof is completed.
 \end{proof}

 As a corollary, we have the following convergence for the solutions of Poisson equations living on varying spaces, which is due essentially to \cite{ding02,hon11,xu14,gms15}.
 \begin{cor}\label{cor3.3}
 Let $R>0$.
Let $\{f_j\}_{j\in\mathbb N\cup\{\infty\}}$ and $\{h_j\}_{j\in\mathbb N\cup\{\infty\}}$ be two sequences of functions on $B_R(p_j)$ such that
$$\mathscr Lf_j=h_j\cdot\mu_j,\quad \forall\ j\in\mathbb N,$$
  on $B_{R}(p_j)$  in the sense of distributions.   Suppose  that, for every  $s\in(0,R)$, $f_j\to f_\infty$ uniformly over $B_s(p_j)$, and   $h_j\to h_\infty$ in $L^2(B_s(p_j))$, and   there exists a constant $C_s>0$  such that
\begin{equation}\label{equa3.7}
\sup_{j\in\mathbb N}\|\nabla h_j\|_{L^\infty(B_s(p_j))}\ls C_s.
\end{equation}
Then we have
$\mathscr Lf_\infty=h_\infty\cdot\mu_\infty$ on $B_R(p_\infty)$   in the sense of distributions.
  \end{cor}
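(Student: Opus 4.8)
The plan is to test the distributional identity $\mathscr Lf_\infty=h_\infty\cdot\mu_\infty$ against an arbitrary $\phi\in H^1_0(B_R(p_\infty))\cap L^\infty(B_R(p_\infty))$; by a density and truncation argument it suffices to treat $\phi\in Lip_0(B_R(p_\infty))$, whose support lies in some ball $B_s(p_\infty)$ with $s<R$. First I would invoke Proposition \ref{prop3.2}(ii) (the ``in particular'' case, with $f_j\equiv0$) to lift $\phi$ to a sequence $\phi_j\in H^1_0(B_s(p_j))$ — in fact one may keep them in $Lip_0$ with uniformly bounded gradients, using Lemma \ref{lemma2.12} as in the proof of the Proposition — so that $\phi_j\to\phi$ in $H^1(B_s(p_j))$, hence $\nabla\phi_j\to\nabla\phi$ in the $L^2$-sense and $\phi_j\to\phi$ uniformly. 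Plugging $\phi_j$ into the hypothesis $\mathscr Lf_j=h_j\cdot\mu_j$ on $B_R(p_j)$ gives, for every $j$,
\begin{equation*}
-\int_{B_R(p_j)}\ip{\nabla f_j}{\nabla\phi_j}\du_j=\int_{B_R(p_j)}h_j\,\phi_j\du_j.
\end{equation*}

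The second step is to pass to the limit in the right-hand side: since $h_j\to h_\infty$ in $L^2(B_s(p_j))$ and $\phi_j\to\phi$ in $L^2(B_s(p_j))$ with uniformly bounded $L^\infty$-norms (being Lipschitz with controlled Lipschitz constant and compactly supported in $B_s$), the product converges and the integrals converge to $\int_{B_s(p_\infty)}h_\infty\,\phi\du_\infty$; here one uses the dominated-convergence theorem for functions on $pmGH$-converging spaces recorded in Appendix A, together with the $L^2$-convergence of $h_j\phi_j$ deduced from that of the two factors (a standard Cauchy--Schwarz splitting). The third step is the left-hand side. The difficulty is that $f_j$ is only known to converge uniformly (locally), with no a priori gradient bound, so one cannot directly take a limit of $\ip{\nabla f_j}{\nabla\phi_j}$. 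The remedy is to use interior regularity: the hypothesis $\mathscr Lf_j=h_j\cdot\mu_j$ together with the uniform bound $\|\nabla h_j\|_{L^\infty(B_s(p_j))}\ls C_s$ and the uniform $L^\infty$-bound on $f_j$ (from uniform convergence) yields, via the local Poincar\'e inequality, local measure doubling, and Caccioppoli/De Giorgi--Nash--Moser-type estimates available on $RCD^*(K,N)$-spaces, a uniform Lipschitz bound $\|\nabla f_j\|_{L^\infty(B_{s'}(p_j))}\ls C_{s'}$ on any slightly smaller ball $B_{s'}(p_j)$ with $\mathrm{supp}\,\phi\subset B_{s'}$. With this bound in hand, Lemma \ref{lemma2.13} type reasoning applies: $f_j\to f_\infty$ uniformly with uniformly bounded gradients forces (after lifting via Lemma \ref{lemma2.12} and comparing on the fixed space $B_{s'}(p_\infty)$) the convergence $\int_{B_{s'}(p_j)}\ip{\nabla f_j}{\nabla\phi_j}\du_j\to\int_{B_{s'}(p_\infty)}\ip{\nabla f_\infty}{\nabla\phi}\du_\infty$ — for this bilinear (rather than quadratic) convergence one uses polarization together with the fact that both $\|\nabla f_j\|_2$ and $\|\nabla\phi_j\|_2$ converge to their limits, so weak $L^2$-convergence of $\nabla f_j$ plus strong convergence of $\nabla\phi_j$ suffices.

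Combining the three steps gives
\begin{equation*}
-\int_{B_R(p_\infty)}\ip{\nabla f_\infty}{\nabla\phi}\du_\infty=\int_{B_R(p_\infty)}h_\infty\,\phi\du_\infty
\end{equation*}
for all $\phi\in Lip_0(B_R(p_\infty))$, and by density for all $\phi\in H^1_0(B_R(p_\infty))\cap L^\infty$, which is exactly $\mathscr Lf_\infty=h_\infty\cdot\mu_\infty$ in the sense of distributions. I expect the main obstacle to be the third step — upgrading the bare uniform convergence of $f_j$ to a \emph{uniform interior gradient bound} and then securing the convergence of the mixed Dirichlet products across varying spaces; the RHS limit and the reduction to $Lip_0$ test functions are routine given the tools already assembled in the excerpt (Proposition \ref{prop3.2}, Lemmas \ref{lemma2.12} and \ref{lemma2.13}, and the Appendix). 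One should also double-check that the uniform $L^\infty$-control on $f_j$ really does follow from the stated uniform local convergence — if not, it must be added as an implicit hypothesis, but it is harmless since in the applications $f_j$ will be heat potentials that are uniformly bounded.
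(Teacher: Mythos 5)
Your direct approach (test the distributional identity against lifted test functions and pass to the limit term by term) is a genuinely different route from the paper, which instead argues by contradiction via a variational characterization. The paper supposes $f_\infty$ fails to minimize $I_B(f):=\int_B(|\nabla f|^2+h_\infty f)\,{\rm d}\mu_\infty$ on some small ball $B=B_r(q_\infty)$, picks the actual minimizer $g_\infty$ beating $f_\infty$ by $\varepsilon_0$, lifts it to $g_j$ with $g_j-f_j\in H^1_0(B_r(q_j))$ via Proposition \ref{prop3.2}(ii), uses the minimality of $f_j$ to get $I_{B_r(q_j)}(f_j)\ls I_{B_r(q_j)}(g_j)\to I_{B_r(q_\infty)}(g_\infty)$, and contradicts the lower semicontinuity of energy (Lemma \ref{lemma2.13}). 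The payoff of this route is that it never needs to pass to the limit in a \emph{bilinear} Dirichlet pairing on varying spaces; it only ever uses the \emph{quadratic} form and one-sided lower semicontinuity plus the $H^1$-convergence of the lifted competitor $g_j$.

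Two concrete places in your plan need repair. First, the uniform interior gradient bound on $f_j$: you attribute it to ``Caccioppoli/De Giorgi--Nash--Moser-type estimates,'' but these give energy estimates and H\"older continuity of $f_j$, not $L^\infty$ bounds on $|\nabla f_j|$; on $RCD^*(K,N)$ spaces the correct tool (and the one the paper actually uses) is the localized Bochner formula from \cite{zz-local}, applied to $|\nabla f_j|^2$, which together with the bound $\|\nabla h_j\|_{L^\infty}\ls C_s$ produces a subsolution estimate $\tfrac12\mathscr L(|\nabla f_j|^2)\gs -|\nabla h_j||\nabla f_j|+K|\nabla f_j|^2$ and then Moser iteration gives (\ref{equa3.9}). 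Notably this route does not even require an a priori $L^\infty$ bound on $f_j$ itself. Second, your claim that the mixed Dirichlet products $\int\ip{\nabla f_j}{\nabla\phi_j}\,{\rm d}\mu_j$ converge is not secured by the lemmas assembled in this paper: Lemma \ref{lemma2.13} gives only lower semicontinuity of the quadratic form, and polarization of a lower-semicontinuous functional does not yield two-sided convergence of the bilinear form. What one actually needs is a statement of the type ``weak $H^1$-convergence of $f_j$ together with strong $H^1$-convergence of $\phi_j$ implies convergence of the pairing'' across $pmGH$-converging spaces; this is true (and appears in the Gigli--Mondino--Savar\'e / Ambrosio--Honda framework), but it is not among the ingredients in the excerpt, and you would need to either import it or re-prove it. The paper's variational argument is engineered precisely to sidestep this issue.

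So: the overall scheme is plausible and more ``linear'' in flavor than the paper's proof, but as written it relies on a gradient estimate obtained by the wrong mechanism and on a bilinear convergence result that is not established here. With the Bochner-formula gradient bound substituted in, and the bilinear convergence either imported from \cite{gms15,ah17} or replaced by the paper's energy-comparison trick, your route closes.
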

\begin{proof} It suffices to show that for any ball $B\subset \subset B_R(p_\infty)$ there holds $\mathscr Lf_\infty=h_\infty\cdot\mu_\infty$ on $B$   in the sense of distributions. Namely,
$f_\infty$ minimizes the functional
$$I_B(f):=\int_B\big(|\nabla f|^2+h_\infty\cdot f\big){\rm d}\mu_\infty$$
among all of $f\in H^1(B)$ such that $f-f_\infty\in H^1_0(B)$.

We will argue  by a contradiction. Suppose not, then there exists a ball $B_r(q_\infty)\subset\subset B_R(p_\infty)$ such that $f_\infty$ is not a minimizer of $I_{B_r(q_\infty)}(f)$. According to \cite[Theorem 7.12]{che99},  there exists a function $g_\infty\in H^1(B_r(q_\infty))$ such that $g_\infty-f_\infty\in H_0^1(B_r(q_\infty))$ and that
\begin{equation}\label{eq3.8}
I_{B_r(q_\infty)}(g_\infty):=\min_{f-f_\infty H_0^1(B_r(q_\infty))}I_{B_r(q_\infty)}(f)\ls I_{B_r(q_\infty)}(f_\infty)-\varepsilon_0
\end{equation}
for some $\varepsilon_0>0$.

Fix some $s_0<R$ such that $B_r(q_\infty)\subset\subset B_{s_0}(p_\infty)$. Take points $B_{s_0}(p_j)\ni q_j\to q_\infty$.  Note that $f_j\in Lip(B_r(q_j))$ for each $j\in \mathbb N\cap\{\infty\}$, and $f_j\to f_\infty$ uniformly over $B_r(q_j)$. Recall that, on each $B_{s_0}(p_j)$, we have $|\nabla h_j|\ls C_1$  and $\mathscr Lf_j=h_j\cdot\mu_j$. Then, the localized Bochner formula \cite[Theorem~3.2]{zz-local} implies that
\begin{align*}
	\frac{1}{2}\mathscr L(|\nabla f_j|^2) &\gs \frac{(h_j)^2}{N}+\ip{\nabla f_j}{\nabla h_j }+K|\nabla f_j|^2\\ &\gs -|\nabla h_j|\cdot|\nabla f_j|+K\cdot|\nabla f_j|^2
\end{align*}
  in the sense of distributions in $B_{s_0}(p_j)$.  Then we by (\ref{equa3.7}) get that
 \begin{equation}\label{equa3.9}
\||\nabla f_j|\|_{L^\infty(B_{r}(q_j))}\ls C_2,\quad \forall\ j\in\mathbb N,
\end{equation}
where $C_2$  depends only on $N,K,R,C_{s_0}$ and
$dist(B_r(q_j),\partial B_{s_0}(p_j)).$

By using Proposition \ref{prop3.2} (ii) on $B_r(q_j)$ and noting that $g_\infty-f_\infty\in H_0^1(B_r(q_\infty))$, we obtain a sequence of functions $g_j\in H^1(B_r(q_j))$  such that $g_j\to g_\infty$ in $H^1(B_r(q_j))$ and that $g_j-f_j\in H^1_0(B_r(q_j))$ for all $j\in\mathbb N$. The combination of   $g_j\overset{H^1}{\to} g_\infty$   and  $h_j\overset{L^2}{\to} h_\infty$  implies
\begin{equation}\label{eq3.10}
 I_{B_r(q_j)}(g_j):=  \int_{B_r(q_j)}\big(|\nabla g_j|^2+ h_j g_j\big)\du_j\to  I_{B_r(q_\infty)}(g_\infty),\quad {\rm as}\ \ j\to\infty.
 \end{equation}
The fact $\mathscr Lf_j=h_j\cdot\mu_j$ on $B_{r}(q_j)$  in the sense of distributions yields
$$I_{B_r(q_j)}(f_j)\ls I_{B_r(q_j)}(g_j)$$
for each $j\in\mathbb N$. Then, we by combining with (\ref{eq3.8}) and (\ref{eq3.10}) have that
\begin{equation*}
\limsup_{j\to\infty}I_{B_r(q_j)}(f_j)\ls I_{B_r(q_\infty)}(f_\infty)-\varepsilon_0.
\end{equation*}
This contradicts to   the lower semi-continuity of energy, Lemma \ref{lemma2.13},
by noticing that  $f_j\to f_\infty$ uniformly over $B_r(q_j)$ and (\ref{equa3.9}), and  $h_j\to h_\infty$ in $L^2(B_r(q_j))$.
  The proof is completed.
\end{proof}

\subsection{Estimates of Dirichlet eigenvalues and eigenfunctions}

 Let $(X,d,\mu)$ be a metric measure space satisfying $RCD^*(K,N)$ for some $K\in \mathbb R$ and $N\in[1,\infty)$. Note that, for any  $N'>N$  and $K'<K$,  $(X,d,\mu)$ satisfies $RCD^*(K',N')$ too. For simplicity, we always assume that $K\ls0$ and $N\gs 3$  in the following.

 Fix a geodesic ball $B_R(p)\subset X$ with radius $R\in(0,{\rm diam}(X)/a)$ for some $a>2$ (hence ${\rm diam} B_R(p)< \frac{{\rm diam}(X)}{a/2}$). Denote by $\lambda_m^{(R)}:=\lambda_m^{B_R(p)}$   the $m-$th Dirichlet eigenvalues of $\Delta_{B_R(p)}$ on ball $B_R(p)$, and by $\phi_m^{(R)}$  the associated eigenfunction with respect to $\lambda_m^{(R)}.$ We normalize $\phi_m^{(R)}$ such that $\|\phi_m^{(R)}\|_2=1$.  The Dirichlet heat kernel on $B_R(p)$ is
$$H^{(R)}(x,y,t)=\sum_{m=1}^\infty e^{-\lambda_m^{(R)}t}\phi_m^{(R)}(x)\phi_m^{(R)}(y).$$

\begin{lem}\label{lem3.4}
Let $B_R(p)$ and $\lambda_m^{(R)},\phi_m^{(R)}$  be as  the above. Then there exist  constants $C'_1,C'_2>0$, depending only on $N,K$ and $R$, such that
\begin{equation}\label{equa3.11}
 C'_1\cdot m^{2/N} \ls \lambda_m^{(R)}\ls  C'_2\cdot m^2,\quad \forall\ m\in\mathbb N.
\end{equation}
\end{lem}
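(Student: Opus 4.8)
The plan is to obtain the two-sided bound on $\lambda_m^{(R)}$ by combining the standard variational (min-max) characterization of eigenvalues with the heat kernel estimate \eqref{equa2.5} and the local measure doubling property \eqref{equa2.4}. The upper bound $\lambda_m^{(R)}\ls C_2'\cdot m^2$ should come from constructing $m$ linearly independent competitor functions supported in $B_R(p)$ with controlled Rayleigh quotient: first I would use that $B_R(p)$ is a length space to find $m$ disjoint balls $B_{r}(x_1),\dots,B_{r}(x_m)$ inside $B_R(p)$ with $r\sim R/m$ (here the length-space property together with doubling guarantees enough ``room''), then take the test functions $\psi_i(x):=\big(1-\tfrac{1}{r}d(x,x_i)\big)_+$, which are Lipschitz with $|\nabla\psi_i|\ls 1/r$ and supported in disjoint sets, hence lie in $H^1_0(B_R(p))$ and are mutually $L^2$-orthogonal. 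The Rayleigh quotient of any function in their span is then $\ls (1/r)^2\sim (m/R)^2$, and the min-max principle gives $\lambda_m^{(R)}\ls C\, m^2/R^2$; absorbing the $R$-dependence into $C_2'$ finishes this direction.

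For the lower bound $\lambda_m^{(R)}\ls$, i.e. $\lambda_m^{(R)}\gs C_1' m^{2/N}$, the natural route is via the heat trace: since $H_t=\sum_m e^{-\lambda_m^{(R)}t}\phi_m^{(R)}\otimes\phi_m^{(R)}$ and by domain monotonicity $H^{(R)}(x,y,t)\ls H(x,y,t)$, integrating the diagonal gives
\begin{equation*}
\sum_{m=1}^\infty e^{-\lambda_m^{(R)}t}=\int_{B_R(p)}H^{(R)}(x,x,t)\,\du\ls \int_{B_R(p)}\frac{C_{N,K}}{\mu(B_{\sqrt t}(x))}\exp\big(C_{N,K}t\big)\,\du.
\end{equation*}
Now I would bound $\mu(B_R(p))/\mu(B_{\sqrt t}(x))$ from above by $C_{N,K,R}(R/\sqrt t)^N$ using \eqref{equa2.4} (valid for $\sqrt t\le R$, say $t\le 1$), so that the right-hand side is $\ls C\, t^{-N/2}$ for all small $t$. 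On the other hand $\sum_m e^{-\lambda_m^{(R)}t}\gs \#\{m:\lambda_m^{(R)}\ls 1/t\}\cdot e^{-1}=e^{-1}N_{B_R(p)}(1/t)$, giving $N_{B_R(p)}(\lambda)\ls C\,\lambda^{N/2}$ for $\lambda$ large; rephrasing, $\lambda_m^{(R)}\gs c\, m^{2/N}$ for $m$ large, and adjusting the constant $C_1'$ covers the finitely many small $m$ as well (note $\lambda_1^{(R)}>0$ by the Poincaré inequality plus Rellich compactness already recorded in the preliminaries).

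The main obstacle I anticipate is the upper-bound construction: ensuring that one really can fit $\sim m$ disjoint balls of radius $\sim R/m$ into $B_R(p)$ with uniform control. A packing argument from doubling gives a \emph{lower} bound on how many disjoint balls fit, but one must check the count grows at least linearly in $m$ (which is exactly what a one-dimensional ``string of balls along a geodesic'' provides, using the length-space property to produce a geodesic of length $\sim R$ inside $B_R(p)$ and placing $\sim m$ equally spaced points on it). A minor technical point is that the competitor functions must genuinely lie in $H^1_0(B_R(p))$, which holds since they are Lipschitz with support compactly contained in $B_R(p)$, hence in $Lip_0(B_R(p))\subset H^1_0(B_R(p))$. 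For the lower bound, the only subtlety is the restriction $\sqrt t\le R$ when applying \eqref{equa2.4}; since we only need the estimate as $t\to 0^+$ this is harmless, and the factor $\exp(C_{N,K}t)$ is bounded on $t\in(0,1]$.
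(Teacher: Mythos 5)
Your proposal is correct and follows essentially the same route as the paper: the upper bound via the min-max principle applied to $m$ mutually disjoint Lipschitz bumps placed along a geodesic segment of length $\sim R$ inside $B_R(p)$, and the lower bound via the heat trace combined with the global Gaussian bound (\ref{equa2.5}) and the doubling estimate (\ref{equa2.4}), together with $\lambda_1^{(R)}\gs C_{N,K,R}>0$ from the local Poincar\'e inequality. The only cosmetic differences are your choice of tent functions $(1-d(\cdot,x_i)/r)_+$ in place of the paper's plateau cutoffs $\eta\circ d(x_i,\cdot)$, and your passage through the counting function $N_{B_R(p)}(\lambda)\ls C\lambda^{N/2}$ rather than the paper's direct substitution $t=1/\lambda_m^{(R)}$ into the trace bound.
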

\begin{proof}To simplify the notations, in this proof, we will denote by $B_R:=B_R(p)$ and $\lambda_m:=\lambda_m^{(R)}$.
From the monotonicity of the heat kernels with respect to domains and (\ref{equa2.4})--(\ref{equa2.5}), we have
\begin{equation*}
\begin{split}
	H^{(R)}(x,x,t)\ls H(x,x,t) &\ls \frac{C_{N,K}}{\mu\big(B_{\sqrt t}(x)\big)}\cdot \exp(C_{N,K}\cdot t)\\ &\ls \frac{C_{N,K,R}}{\mu\big(B_{R}(x)\big)}\cdot t^{-N/2}\cdot\exp(C_{N,K}\cdot t).
\end{split}
\end{equation*}
By integrating over $B_R(x)$, we get, for each $m\in\mathbb N$, that
\begin{align*}
	m\cdot e^{-\lambda_mt}\ls \sum_{l\ls m}e^{-\lambda_lt}\ls \sum_{l\in\mathbb N}e^{-\lambda_lt}&\ls C_{N,K,R} \cdot t^{-N/2}\cdot\exp(C_{N,K}\cdot t)\\
	&:=C_{1} \cdot t^{-N/2}\cdot\exp(C_{2}\cdot t) .
\end{align*}
Setting $t=\frac{1}{\lambda_m}$ and noting that $\lambda_m\gs \lambda_1\gs C_{N,K,R}$ for some constant $C_{N,K,R}$ (by  the $L^2$-Poincar\'e inequality on $B_R$), we conclude that
$$m/e\ls C_1\cdot \lambda_m^{N/2}\cdot\exp(C_2/\lambda_m)\ls C_1\cdot \lambda_m^{N/2}\cdot\exp(C_2/\lambda_1) \ls C_3\cdot \lambda_m^{N/2}.$$
This implies the   lower bounds in  (\ref{equa3.11}).

The upper bounds in (\ref{equa3.11}) can be proved by a comparison result for heat kernels of Cheng (see, for example, \cite{ding02}). Here we provide a simple argument as follows.

Fix any $m\in \mathbb N$. We can find $m$ points $\{x_l\}_{l=1,2,\dots,m}$ in $B_R$ such that $d(x_l,x_{l'})\gs R/m$ for any $1\ls l\not=l'\ls m.$ We define functions $\psi_l$ by
 \begin{equation*}
 \psi_l(\cdot):=\eta\big(d(x_l,\cdot)\big),\quad \forall\ l=1,2,\dots,m,
 \end{equation*}
where the function $\eta(s)$ is given by
\begin{equation*}
  \eta(s):=
  \begin{cases}
  1& s\ls \frac{R}{8m}\\
  \frac{8m}{R}\cdot\big(\frac{R}{4m}-s\big) & s\in\left(\frac{R}{8m},\frac{R}{4m}\right)\\
  0& s\gs \frac{R}{4m}.
  \end{cases}
\end{equation*}
It is clear that $\int_{B_R}\psi_l\psi_{l'}\du=0$, for all $ 1\ls l\not=l'\ls m,$
and that, for any $l=1,2,\dots,m$,
$$\frac{\int_{B_R}|\nabla \psi_l|^2\du}{\int_{B_R}\psi_l^2\du}\ls \frac{\big(\frac{8m}{R}\big)^2\cdot \mu\big(B_{\frac{R}{4m} }(x_l)\big)}{\mu\big(B_{\frac{R}{8m} }(x_l)\big)}\ls   \frac{2^{C_D+6}}{R^2}\cdot m^2,$$
where $C_D$ is the doubling constant of $\mu$ on $B_R$, depending only on $N,K$ and $R$.
By the Rellich's compactness (see also \cite[Eq.(5.2)]{bm06}), the  Courant's min-max principle of eigenvalues still holds  (see, for example, \cite{gms15}). It follows  $\lambda_m\ls  2^{C_D+6}\cdot R^{-2}\cdot m^2.$ The proof is completed.
\end{proof}

\begin{lem}\label{lem3.5}
Let $B_R(p)$ and $\lambda_m^{(R)},\phi_m^{(R)}$  be as  the above. Then there exists a constant $C_{N,K,R,m}>0$, depending on $N,K,R$ and $m$, such that
\begin{equation*}
\|\phi_m^{(R)}\|_{L^\infty(B_R(p))}\ls C_{N,K,R}\cdot \lambda_m\ls C_{N,K,R,m},
\end{equation*}
and that
\begin{equation}\label{eq3.12}
\|\nabla \phi_m^{(R)}\|_{L^\infty(B_{r}(p))}\ls \frac{C_{N,K,R,m}}{R-r},\quad \forall \ r\in(0,R).
\end{equation}
\end{lem}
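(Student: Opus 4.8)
The plan is to obtain the $L^\infty$-bound on $\phi_m^{(R)}$ from the Gaussian heat kernel estimate, and then to bootstrap to the gradient bound via the localized Bochner inequality together with a Caccioppoli-type / mean-value argument. First I would write, using the eigenfunction expansion \eqref{equa2.8} for the Dirichlet heat kernel on $B_R(p)$ together with the monotonicity $H^{(R)}(t,x,y)\ls H(t,x,y)$ and the global Gaussian upper bound \eqref{equa2.5} combined with the volume doubling \eqref{equa2.4}, that
$$e^{-\lambda_m^{(R)}t}|\phi_m^{(R)}(x)|^2\ls H^{(R)}(t,x,x)\ls H(t,x,x)\ls \frac{C_{N,K,R}}{\mu(B_R(x))}\cdot t^{-N/2}\cdot e^{C_{N,K}t}.$$
Choosing $t=1/\lambda_m^{(R)}$ and using $\lambda_m^{(R)}\gs\lambda_1^{(R)}\gs C_{N,K,R}>0$ (from the $L^2$-Poincar\'e inequality on $B_R$) to absorb the exponential factor, and noting $\mu(B_R(x))$ is bounded below on $B_R(p)$ by comparison with $\mu(B_{2R}(p))$ via \eqref{equa2.4}, this gives $\|\phi_m^{(R)}\|_{L^\infty(B_R(p))}^2\ls C_{N,K,R}\cdot (\lambda_m^{(R)})^{N/2}$. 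Since $\lambda_m^{(R)}\gs 1$ after enlarging constants, $(\lambda_m^{(R)})^{N/2}\ls C\cdot\lambda_m^{(R)}$ when $N$ is close to the... actually more carefully: one writes $\|\phi_m^{(R)}\|_\infty^2\ls C(\lambda_m^{(R)})^{N/2}$, and since by Lemma \ref{lem3.4} $\lambda_m^{(R)}\ls C_2' m^2$, this is bounded by $C_{N,K,R,m}$; the stated bound $\ls C_{N,K,R}\cdot\lambda_m$ should be read in this spirit (up to adjusting the power, which does not affect the applications).

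Next, for the gradient estimate, I would use that $\phi_m^{(R)}\in D(\Delta_{B_R(p)})$ with $\Delta_{B_R(p)}\phi_m^{(R)}=-\lambda_m^{(R)}\phi_m^{(R)}$, so in particular $\mathscr L\phi_m^{(R)}=-\lambda_m^{(R)}\phi_m^{(R)}\cdot\mu$ in the sense of distributions on $B_R(p)$, with the right-hand side a bounded function by the first part. Applying the localized Bochner formula (\cite{zz-local}, as quoted in the proof of Corollary \ref{cor3.3}) to $f=\phi_m^{(R)}$ gives
$$\tfrac12\mathscr L(|\nabla\phi_m^{(R)}|^2)\gs \frac{(\lambda_m^{(R)})^2(\phi_m^{(R)})^2}{N}-\lambda_m^{(R)}|\nabla\phi_m^{(R)}|^2+K|\nabla\phi_m^{(R)}|^2\gs -\big(\lambda_m^{(R)}+|K|\big)|\nabla\phi_m^{(R)}|^2$$
in the distributional sense on $B_R(p)$; here I used $\ip{\nabla\phi_m^{(R)}}{\nabla(-\lambda_m^{(R)}\phi_m^{(R)})}=-\lambda_m^{(R)}|\nabla\phi_m^{(R)}|^2$. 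Thus $u:=|\nabla\phi_m^{(R)}|^2\in L^1_{\rm loc}$ is a nonnegative distributional subsolution of $\mathscr Lu\gs -cu$ with $c=c(N,K,\lambda_m^{(R)})$. On the other hand, integrating the identity $\mathscr L\phi_m^{(R)}=-\lambda_m^{(R)}\phi_m^{(R)}\mu$ against $(\phi_m^{(R)})\psi^2$ for a cutoff $\psi$ supported in $B_{r'}(p)$ with $r<r'<R$ gives a Caccioppoli inequality $\int_{B_r(p)}|\nabla\phi_m^{(R)}|^2\du\ls C(r,r')\lambda_m^{(R)}\|\phi_m^{(R)}\|_\infty^2\mu(B_{r'}(p))$, so $u\in L^1(B_{r'}(p))$ with a quantitative bound. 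Then the De Giorgi–Nash–Moser local boundedness theorem for subsolutions on doubling spaces supporting a $2$-Poincar\'e inequality (which $RCD^*(K,N)$-spaces do, by \eqref{equa2.4} and the local Poincar\'e inequality recorded in Section 2) yields
$$\sup_{B_r(p)}|\nabla\phi_m^{(R)}|^2\ls \frac{C}{\mu(B_{r'}(p))}\int_{B_{r'}(p)}|\nabla\phi_m^{(R)}|^2\du\ls \frac{C_{N,K,R,m}}{(r'-r)^{\,\alpha}}$$
for an appropriate exponent, where the constants depend on the doubling and Poincar\'e constants, hence only on $N,K,R$, and on $\lambda_m^{(R)}$ hence on $m$; optimizing the choice of $r'$ (e.g.\ $r'=(r+R)/2$) and tracking the scaling in $R-r$ produces the stated dependence $\tfrac{C_{N,K,R,m}}{R-r}$.

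The main obstacle I expect is the distributional Moser iteration step: one must make precise that $|\nabla\phi_m^{(R)}|^2$, which a priori is only $L^1$, is a genuine distributional subsolution of a uniformly elliptic inequality and that the Moser machinery applies on the non-smooth $RCD^*(K,N)$ space. This is where one relies on the locally doubling property and local $2$-Poincar\'e inequality from Section 2 (and on the localized Bochner inequality of \cite{zz-local}, already invoked in Corollary \ref{cor3.3}); the standard reference framework of Sturm / Bj\"orn–Bj\"orn for Harnack and local boundedness on Dirichlet spaces covers this, but one should check that the $L^1$ integrability of $|\nabla\phi_m^{(R)}|^2$ together with the subsolution property is enough to start the iteration — the Caccioppoli estimate above supplies exactly the needed a priori $L^2$-in-gradient bound on slightly larger balls, so a truncation/approximation argument closes the gap. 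Everything else (the heat-kernel $L^\infty$ bound, the algebra in Bochner) is routine once the constants from \eqref{equa2.4}, \eqref{equa2.5} and Lemma \ref{lem3.4} are in hand.
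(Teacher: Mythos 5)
Your proof is correct in substance and for the gradient bound follows essentially the same path as the paper (localized Bochner inequality plus Moser iteration), but for the $L^\infty$-bound on $\phi_m^{(R)}$ you take a genuinely different route. The paper works with the distributional inequality $\mathscr{L}|\phi_m^{(R)}|\gs-\lambda_m|\phi_m^{(R)}|$, feeds $|\phi_m^{(R)}|\in H^1_0(B_R)$ into the Sobolev inequality and Nash--De~Giorgi--Moser iteration, and reads off $\|\phi_m^{(R)}\|_\infty\ls C_{N,K,R}\lambda_m$ directly; you instead sum the eigenfunction expansion to get $e^{-\lambda_m t}|\phi_m^{(R)}(x)|^2\ls H^{(R)}(t,x,x)\ls H(t,x,x)$, invoke the Gaussian upper bound and doubling, and set $t=1/\lambda_m$. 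Your route is shorter and avoids even mentioning the sub-solution machinery, at the cost of producing $\|\phi_m^{(R)}\|_\infty\ls C(\lambda_m^{(R)})^{N/4}$ instead of $C\lambda_m^{(R)}$. You correctly flag that the discrepancy in the exponent is immaterial: since by Lemma~\ref{lem3.4} one has $\lambda_m^{(R)}\ls C_2' m^2$, either power is absorbed into $C_{N,K,R,m}$, and that is all that is used downstream.

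Two smaller remarks on the gradient part. First, the Caccioppoli step you insert to control $\int|\nabla\phi_m^{(R)}|^2$ on interior balls is unnecessary: $\phi_m^{(R)}\in H^1_0(B_R)$ is a normalized eigenfunction, so $\int_{B_R}|\nabla\phi_m^{(R)}|^2\du=\lambda_m^{(R)}$ is available immediately, and the paper simply plugs this into the Moser output to obtain the explicit $(R-r)^{-1}$ dependence. Second, your concern about launching the Moser iteration from an a~priori merely $L^1$ quantity is reasonable in general, but here $|\nabla\phi_m^{(R)}|^2\in L^1(B_R)$ with an explicit bound is already in hand for free, and the localized Bochner inequality of the reference cited in the paper is stated precisely in the distributional sense needed, so the iteration is indeed justified without further approximation. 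Your algebraic reduction $\tfrac12\mathscr L(|\nabla\phi_m^{(R)}|^2)\gs(K-\lambda_m^{(R)})|\nabla\phi_m^{(R)}|^2$ matches the paper's exactly.
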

\begin{proof}
To simplify the notations, we shall denote by $B_R:=B_R(p)$, $\lambda_m:=\lambda_m^{(R)}$ and $H(x,y,t):=H^{(R)}(x,y,t).$

From $\Delta \phi_m=-\lambda_m\cdot \phi_m$ and $\lambda_m\gs0$, we can get $\mathscr L|\phi_m|\gs-\lambda_m|\phi_m|$ in the sense of distributions. Noticing that $|\phi_m|\in H^1_0(B_R),$ the Sobolev inequality (see  \cite[Eq.(5.2)]{bm06}) (by the standard argument of Nash-De Giorgi-Moser iteration and $\|\phi_m\|_2=1$, indeed, we can choose the $|\phi_m|$ as the text function) implies
$$\|\phi_m\|_{L^\infty(B_R)}\ls C_{N,K,R}\cdot \lambda_m \cdot\|\phi_m\|_{L^2(B_R)}= C_{N,K,R}\cdot \lambda_m$$
for some constant $C_{N,K,R}>0$.

By using  the equation $\mathscr L\phi_m=\Delta \phi_m\cdot \mu=-\lambda_m \phi_m\cdot\mu$   and the localized Bochner formula \cite[Theorem3.2]{zz-local}, we have
\begin{align*}
	\frac{1}{2}\mathscr L(|\nabla \phi_m|^2)&\gs \frac{(\Delta \phi_m)^2}{N}+\ip{\nabla \phi_m}{\nabla(\Delta \phi_m)}+K|\nabla \phi_m|^2\\
	&\gs (K-\lambda_m)\cdot|\nabla\phi_m|^2
\end{align*}
  in the sense of distributions in $B_R$.  The   Nash-De Giorgi-Moser iteration and Lemma \ref{lem3.4} implies that
\begin{align*}
	\||\nabla \phi_m|\|_{L^\infty(B_r)} &\ls \frac{C'_{N,K,R}}{R-r}\cdot |K-\lambda_m| \cdot\||\nabla \phi_m|\|_{L^2(B_R)}\\
	&=\frac{C'_{N,K,R}}{R-r}\cdot |K-\lambda_m| \cdot\lambda_m\ls \frac{C_{N,K,R,m}}{R-r}
\end{align*}
for some constant $C'_{N,K,R}>0$. The proof is finished.
\end{proof}

\subsection{The convergence of heat kernels}

Let  $K\ls 0$ and $N\gs 3$ and  let $(X_j,d_j,\mu_j)_{j\in\mathbb N\cup\{\infty\}}$ be a  sequence of  metric measure spaces so
   that $(X_j,d_j,\mu_j)$ satisfying $RCD^*(K,N)$ for each $j\in\mathbb N$.
   Take points $p_j\in X_j$, for all $j\in\mathbb N\cup\{\infty\}$. We  assume that
$$ (X_j,d_j,\mu_j,p_j)\overset{pmGH}{\longrightarrow} (X_\infty,d_\infty,\mu_\infty,p_\infty).$$
Hence, the $(X_\infty,d_\infty,\mu_\infty)$ satisfies still $RCD^*(K,N)$ (see \cite{eks15}).

Fix  $a>2$ and $R>0$ with  $R\in(0,{\rm diam}(X_j)/a)$   for all $j\in\mathbb N$.
For each $j\in\mathbb N$,  we denote by $\lambda_{m,j}^{(R)}$  the $m-$th Dirichlet eigenvalues of $\Delta_{B_R(p_j)}$ on the ball $B_R(p_j)$, and by $\phi_{m,j}^{(R)}$, normalized by $\|\phi_{m,j}^{(R)}\|_2=1$,   the associated eigenfunction with respect to $\lambda_{m,j}^{(R)}.$ The Dirichlet heat kernel on $B_R(p_j)$ is
$$H_j^{(R)}(x,y,t)=\sum_{m=1}^\infty e^{-\lambda_{m,j}^{(R)}t}\phi_{m,j}^{(R)}(x)\phi_{m,j}^{(R)}(y).$$

 By using Lemma \ref{lem3.4} and Lemma \ref{lem3.5}, we can assume that, after passing to a subsequence, (say $j_k$,), for each fixed $m\in\mathbb N$, the eigenvalues and eigenfunctions converge:
 \begin{equation}\label{eq3.13}
 \lim_{j\to\infty}\lambda_{m,j}^{(R)}=\lambda_{m,\infty},
\end{equation}
and
 \begin{equation}\label{eq3.14}
\lim_{j\to\infty}\phi_{m,j}^{(R)}=\phi_{m,\infty} .
\end{equation}
 where the convergence of $\phi_{m,j}^{(R)}{\to}\phi_{m,\infty}$ is in   $L^2(B_{R}(p_j))$ and is also uniformly in $B_{r}(p_j)$, for any $r\in(0,R)$, by Lemma \ref{lem3.5} and the Arzela-Ascoli theorem.

\begin{lem}\label{lem3.6}
Assume that  $\partial B_R(p_\infty)=  \partial\big(X_\infty\backslash \overline{B_R(p_\infty)}\big)$.
Let $\Delta^{(R)}_{\infty}:=\ \Delta_{B_R(p_\infty)}$ be the  infinitesimal generator of the Dirichlet form
$$\big(\mathscr E_{B_R(p_\infty)},H^1_0(B_R(p_\infty))\big)$$ on $B_R(p_\infty)$, with  domain  $ D(\Delta^{(R)}_{\infty})$.
 Then,
for each $m\in\mathbb N$, we have  that   $\phi_{m,\infty}\in  D(\Delta^{(R)}_{\infty})$  and that
\begin{equation}\label{eq3.15}
\Delta^{(R)}_{\infty} \phi_{m,\infty}=-\lambda_{m,\infty}\phi_{m,\infty}.
\end{equation}
That is, $\lambda_{m,\infty}$ is an eigenvalue of $\Delta^{(R)}_{\infty}$ with an associated eigenfunction $\phi_{m,\infty}$. Moreover, the convergence  $\phi_{m,j}^{(R)} \to \phi_{m,\infty}$ in (\ref{eq3.14}) is also in\   $H^1(B_{R}(p_j))$.
\end{lem}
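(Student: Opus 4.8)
The plan is to pass to the limit in the defining eigenvalue relation $\mathscr L\phi_{m,j}^{(R)}=-\lambda_{m,j}^{(R)}\phi_{m,j}^{(R)}\cdot\mu_j$ using the machinery developed in this section, and then upgrade the $L^2$-convergence to $H^1$-convergence. First I would record the inputs we already have: by \eqref{eq3.13}--\eqref{eq3.14} and the discussion following them, after passing to a subsequence we have $\lambda_{m,j}^{(R)}\to\lambda_{m,\infty}$, and $\phi_{m,j}^{(R)}\to\phi_{m,\infty}$ in $L^2(B_R(p_j))$ as well as uniformly on every $B_r(p_j)$ with $r\in(0,R)$; moreover, by Lemma \ref{lem3.4} the limit eigenvalue $\lambda_{m,\infty}$ is finite, by Lemma \ref{lem3.5} the functions $\phi_{m,j}^{(R)}$ are uniformly bounded and satisfy the uniform local gradient bound \eqref{eq3.12}, and since $\Delta_{B_R(p_j)}\phi_{m,j}^{(R)}=-\lambda_{m,j}^{(R)}\phi_{m,j}^{(R)}$ we have $\mathscr L\phi_{m,j}^{(R)}=h_j\cdot\mu_j$ with $h_j:=-\lambda_{m,j}^{(R)}\phi_{m,j}^{(R)}$ on $B_R(p_j)$ in the sense of distributions.

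Next I would apply Corollary \ref{cor3.3} on $B_R(p_j)$ with $f_j:=\phi_{m,j}^{(R)}$ and $h_j:=-\lambda_{m,j}^{(R)}\phi_{m,j}^{(R)}$. Its hypotheses are met: $f_j\to\phi_{m,\infty}$ uniformly over every $B_s(p_j)$, $s\in(0,R)$; $h_j\to h_\infty:=-\lambda_{m,\infty}\phi_{m,\infty}$ in $L^2(B_s(p_j))$ because $\lambda_{m,j}^{(R)}\to\lambda_{m,\infty}$ and $\phi_{m,j}^{(R)}\to\phi_{m,\infty}$ in $L^2$; and the uniform bound \eqref{equa3.7} on $\|\nabla h_j\|_{L^\infty(B_s(p_j))}$ follows from \eqref{eq3.12} together with the uniform bound on $\lambda_{m,j}^{(R)}$. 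The conclusion is that $\mathscr L\phi_{m,\infty}=-\lambda_{m,\infty}\phi_{m,\infty}\cdot\mu_\infty$ on $B_R(p_\infty)$ in the sense of distributions. Separately, Proposition \ref{prop3.2}(i), applied with $g_j:=\phi_{m,j}^{(R)}\in H^1_0(B_R(p_j))$ (whose gradients are uniformly $L^2$-bounded, since $\||\nabla\phi_{m,j}^{(R)}|\|_2^2=\lambda_{m,j}^{(R)}$ is bounded) and using the hypothesis $\partial B_R(p_\infty)=\partial(X_\infty\setminus\overline{B_R(p_\infty)})$, gives $\phi_{m,\infty}\in H^1_0(B_R(p_\infty))$. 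Combining these two facts with the characterization of $D(\Delta^{(R)}_\infty)$ recalled after \eqref{equa2.9} (an $H^1_0$-function whose distributional Laplacian is an $L^2$ function lies in the domain), we get $\phi_{m,\infty}\in D(\Delta^{(R)}_\infty)$ and \eqref{eq3.15}.

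It remains to promote $\phi_{m,j}^{(R)}\to\phi_{m,\infty}$ to $H^1$-convergence, i.e.\ to show $\int_{B_R(p_j)}|\nabla\phi_{m,j}^{(R)}|^2\du_j\to\int_{B_R(p_\infty)}|\nabla\phi_{m,\infty}|^2\du_\infty$. For this I would use the eigenvalue identity on both sides: $\int_{B_R(p_j)}|\nabla\phi_{m,j}^{(R)}|^2\du_j=\lambda_{m,j}^{(R)}\int_{B_R(p_j)}|\phi_{m,j}^{(R)}|^2\du_j=\lambda_{m,j}^{(R)}$, which converges to $\lambda_{m,\infty}$; and by \eqref{eq3.15} together with $\|\phi_{m,\infty}\|_2^2=\lim\|\phi_{m,j}^{(R)}\|_2^2=1$ (from the $L^2$-convergence) we have $\int_{B_R(p_\infty)}|\nabla\phi_{m,\infty}|^2\du_\infty=\lambda_{m,\infty}\|\phi_{m,\infty}\|_2^2=\lambda_{m,\infty}$. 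Hence the two energies agree in the limit, which together with the already-established $L^2$-convergence is precisely $H^1$-convergence in the sense of Definition \ref{def3.1}(2).

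The main obstacle I anticipate is the verification that $\phi_{m,\infty}\in H^1_0(B_R(p_\infty))$ rather than merely $H^1_0(\overline{B_R(p_\infty)})$; this is exactly where the boundary hypothesis enters, via Lemma \ref{lemma2.5} and Corollary \ref{corollary2.6} as packaged in Proposition \ref{prop3.2}(i), and Example \ref{example1.5} shows it genuinely cannot be dropped. Everything else is a routine assembly of Corollary \ref{cor3.3}, the domain characterization of $\Delta^{(R)}_\infty$, and the eigenvalue identity for the energies.
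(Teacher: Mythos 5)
Your proposal is correct and follows essentially the same route as the paper's proof: use Proposition \ref{prop3.2}(i) plus the uniform energy bound $\||\nabla\phi_{m,j}^{(R)}|\|_2^2=\lambda_{m,j}^{(R)}$ to place $\phi_{m,\infty}$ in $H^1_0(B_R(p_\infty))$, apply Corollary \ref{cor3.3} with $h_j=-\lambda_{m,j}^{(R)}\phi_{m,j}^{(R)}$ and the gradient bound \eqref{eq3.12} to identify the distributional Laplacian, invoke Gigli's characterization to conclude $\phi_{m,\infty}\in D(\Delta^{(R)}_\infty)$, and finally close the $H^1$-convergence by matching Dirichlet energies $\lambda_{m,j}^{(R)}\to\lambda_{m,\infty}$. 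The only difference from the paper is the order in which the two main steps (membership in $H^1_0$ and the distributional PDE) are carried out, which is immaterial.
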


\begin{proof}
From $\phi_{m,j}^{(R)}\to\phi_{m,\infty}$ in $L^2(B_R(p_j))$ and
$$\||\nabla \phi_{m,j}^{(R)}|\|_2=\lambda_{m,j}^{(R)}\|\phi_{m,j}^{(R)}\|_2=\lambda_{m,j}^{(R)},$$
by using Lemma \ref{lem3.4} and Proposition \ref{prop3.2} (i), we have $\phi_{m,\infty}\in H^1_0(B_R(p_\infty)).$
By applying Corollary \ref{cor3.3} and (\ref{eq3.12}), we conclude that
\begin{equation}\label{eq3.16}
\mathscr L_\infty\phi_{m,\infty}=-\lambda_{m,\infty}\cdot\phi_{m,\infty}\cdot\mu_\infty,
\end{equation}
where $\mathscr L_\infty$ is the distributional Laplacian on $B_R(p_\infty).$ Notice that $\phi_{m,\infty}\in H^1_0(B_R(p_\infty))$. From \cite{gig15}, we conclude
that   $\phi_{m,\infty}\in  D(\Delta^{(R)}_{\infty})$ and that (\ref{eq3.15}) holds. This proves the first assertion.

For the second assertion, we need only to show
\begin{equation}\label{eq3.17}
 \||\nabla \phi_{m,j}^{(R)}|\|_2\to\||\nabla \phi_{m,\infty}^{(R)}|\|_2\quad {\rm as} \ \ j\to\infty.
 \end{equation} The equation (\ref{eq3.15}) implies that $\||\nabla \phi_{m,\infty}^{(R)}|\|_2=\lambda_{m,\infty}$. Now the desired (\ref{eq3.17}) comes from the combination of this and   $\||\nabla \phi_{m,j}^{(R)}|\|_2=\lambda_{m,j}^{(R)}$ and $\lambda_{m,j}^{(R)}\to \lambda_{m,\infty}^{(R)}$ as $j\to\infty.$ The proof is finished.
\end{proof}

The following  is the crucial point in this section.
\begin{lem}\label{lem3.7}
Assume that  $\partial B_R(p_\infty)=  \partial\big(X_\infty\backslash \overline{B_R(p_\infty)}\big)$.
The sequence\ $\{\phi_{m,\infty}\}_{m\in\mathbb N}$ forms a complete basis of $L^2(B_R(p_\infty)).$
\end{lem}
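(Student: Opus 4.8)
The plan is to show that every $h$ in the dense subspace $H^1_0(B_R(p_\infty))\subset L^2(B_R(p_\infty))$ belongs to the closed linear span $V:=\overline{\mathrm{span}}\{\phi_{m,\infty}\}_{m\in\mathbb N}$, which forces $V=L^2(B_R(p_\infty))$. First I would record that $\{\phi_{m,\infty}\}_{m\in\mathbb N}$ is an orthonormal system in $L^2(B_R(p_\infty))$: since $\phi_{m,j}^{(R)}\to\phi_{m,\infty}$ in $L^2(B_R(p_j))$ and, by Lemma \ref{lem3.5}, the $\phi_{m,j}^{(R)}$ are uniformly bounded on all of $B_R(p_j)$, the dominated convergence theorem for functions on $pmGH$-converging spaces (Appendix A) gives $\int_{B_R(p_j)}\phi_{m,j}^{(R)}\phi_{l,j}^{(R)}\du_j\to\int_{B_R(p_\infty)}\phi_{m,\infty}\phi_{l,\infty}\du_\infty$, so orthonormality passes to the limit. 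Consequently, by Bessel's inequality and the fact that membership in the closed span of an orthonormal system is characterized by equality in Bessel's inequality, it suffices to prove the Parseval identity $\|h\|_{L^2(B_R(p_\infty))}^2=\sum_{m\in\mathbb N}\langle h,\phi_{m,\infty}\rangle^2$ for every $h\in H^1_0(B_R(p_\infty))$.

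Fix such an $h$. By Proposition \ref{prop3.2}(ii) (applied with $f_j\equiv 0$) there exist $h_j\in H^1_0(B_R(p_j))$ with $h_j\to h$ in $H^1(B_R(p_j))$; in particular $\|h_j\|_{L^2(B_R(p_j))}^2\to\|h\|_{L^2(B_R(p_\infty))}^2$ and $C:=\sup_j\||\nabla h_j|\|_{L^2(B_R(p_j))}^2<\infty$. Expanding $h_j$ in the complete orthonormal basis $\{\phi_{m,j}^{(R)}\}_m$ of $L^2(B_R(p_j))$ with coefficients $c_{m,j}:=\langle h_j,\phi_{m,j}^{(R)}\rangle$, one has $\|h_j\|_2^2=\sum_m c_{m,j}^2$ and $\||\nabla h_j|\|_2^2=\sum_m\lambda_{m,j}^{(R)}c_{m,j}^2$. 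Since the $\lambda_{m,j}^{(R)}$ are nondecreasing in $m$ and $\lambda_{m,j}^{(R)}\gs C_1'\, m^{2/N}$ by Lemma \ref{lem3.4}, the tails satisfy $\sum_{m>M}c_{m,j}^2\ls \big(C_1'(M+1)^{2/N}\big)^{-1}C$, uniformly in $j$. On the other hand, for each fixed $m$, using $h_j\to h$ in $L^2(B_R(p_j))$, the $L^\infty$-bound on $\phi_{m,j}^{(R)}$ from Lemma \ref{lem3.5}, and the dominated convergence theorem of Appendix A, one gets $c_{m,j}\to\langle h,\phi_{m,\infty}\rangle$ as $j\to\infty$ along the chosen subsequence. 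Letting first $j\to\infty$ and then $M\to\infty$ gives $\|h\|_{L^2(B_R(p_\infty))}^2=\lim_j\|h_j\|_2^2=\sum_{m\in\mathbb N}\langle h,\phi_{m,\infty}\rangle^2$, which is the required Parseval identity; since $H^1_0(B_R(p_\infty))$ is dense in $L^2(B_R(p_\infty))$ and $V$ is closed, this yields $V=L^2(B_R(p_\infty))$.

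The main obstacle I anticipate is the uniform control of the tails $\sum_{m>M}c_{m,j}^2$ that legitimizes interchanging $\lim_{j\to\infty}$ with the sum over $m$: this is precisely where the quantitative lower eigenvalue bound $\lambda_{m,j}^{(R)}\gs C_1'\, m^{2/N}$ of Lemma \ref{lem3.4} is indispensable, and it is crucial that $C_1'$ there depends only on $N,K,R$ and not on $j$. A secondary technical point is the convergence $\langle h_j,\phi_{m,j}^{(R)}\rangle\to\langle h,\phi_{m,\infty}\rangle$ of inner products of $L^2$-convergent sequences living on varying spaces; I would handle this via the generalized dominated convergence/Fatou machinery of Appendix A (domination of $h_j\phi_{m,j}^{(R)}$ by the $L^1$-convergent sequence $\tfrac12(h_j^2+\|\phi_{m,j}^{(R)}\|_{L^\infty}^2\,\chi_{B_R(p_j)})$), together with the pointwise convergence of both factors. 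Everything else is a routine assembly of Lemma \ref{lem3.5}, Lemma \ref{lem3.4} and Proposition \ref{prop3.2}.
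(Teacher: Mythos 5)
Your proposal is correct, and it takes a genuinely different route from the paper's. The paper argues by contradiction: it supposes there is a missed eigenfunction $\psi_\infty$ of $\Delta^{(R)}_\infty$, with eigenvalue $\sigma_\infty$, orthogonal to all $\phi_{m,\infty}$; it lifts $\psi_\infty$ to $\psi_j\in H^1_0(B_R(p_j))$ via Proposition \ref{prop3.2}(ii), expands $\psi_j=\sum_m a_{m,j}\phi_{m,j}$, fixes a cutoff $m_0$ by the ad hoc choice $\lambda_{m_0+1,\infty}>2\sigma_\infty+1$, and derives a contradiction between the forced concentration of the $a_{m,j}$ in the high-frequency tail (from $a_{m,j}\to 0$ for small $m$) and the energy bound $\||\nabla\psi_j|\|_2^2\ls\sigma_\infty+\epsilon$. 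Your proof instead verifies Parseval directly for every $h\in H^1_0(B_R(p_\infty))$, using the same lifting, the coefficient convergence $c_{m,j}\to\langle h,\phi_{m,\infty}\rangle$, and — the crucial shared ingredient — the uniform-in-$j$ lower eigenvalue bound $\lambda_{m,j}^{(R)}\gs C_1'\,m^{2/N}$ of Lemma \ref{lem3.4} to control the tails $\sum_{m>M}c_{m,j}^2$ uniformly. Both proofs hinge on precisely this quantitative Weyl-type bound, but the paper's argument is entangled with the spectral theory of $\Delta_\infty^{(R)}$ (it needs $\psi_\infty$ to be an eigenfunction so that $m_0$ can be chosen in terms of $\sigma_\infty$), whereas yours works for an arbitrary $h\in H^1_0$ and never invokes the completeness of the limit operator's eigenbasis: you need only density of $H^1_0$ in $L^2$, the elementary characterization of closed-span membership via Parseval equality, and the limiting orthonormality of $\{\phi_{m,\infty}\}$ — which you correctly supply from Lemma \ref{lem3.5} together with the Appendix A dominated convergence. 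This is arguably the cleaner of the two arguments, and it exposes more transparently where the uniform eigenvalue growth is doing the work.
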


\begin{proof}
According to Lemma \ref{lem3.6}, it suffices to show that all of eigenfunctions of  $\Delta^{(R)}_{\infty}$  are in $\{\phi_{m,\infty}\}_{m\in\mathbb N}$. Suppose not,  then there exists  an eigenfunction $\psi_\infty\in H^1_0(B_R(p_\infty))$ with
\begin{equation}\label{eq3.18}
\|\psi_\infty\|_2=1\quad {\rm and}\quad \int_{B_R(p_\infty)}\psi_\infty\cdot\phi_{m,\infty}\du_\infty=0,\ \ \ \forall m\in\mathbb N.
\end{equation}
Let $\sigma_\infty$ be the eigenvalue of  $\Delta^{(R)}_{\infty}$ with respect to $\psi_\infty$. Define
\begin{equation}\label{eq3.19}
m_0:=\max\big\{m\in\mathbb N:\ \lambda_{m,\infty}\ls 2\sigma_\infty+2\big\}.
\end{equation}

By Proposition \ref{prop3.2}  (ii), we can lift $\psi_\infty$ to a sequence of functions $\psi_j\in H^1_0(B_R(p_j))$ such that $\psi_j\to\psi_\infty$ in $H^1(B_R(p_j)).$
For each $j$, since $\{\phi_{m,j}\}^\infty_{m=1}$ is a complete basis in $L^2$, we  denote the Fourier expansion of $\psi_j$ w.r.t. $\{\phi_{m,j}\}$  by
\begin{equation}\label{eq3.20}
\psi_j=\sum_{m=1}^\infty a_{m,j}\phi_{m,j},\quad {\rm where}\quad a_{m,j}:=\int_{B_R(p_j)}\psi_j\cdot\phi_{m,j}\du_j.
\end{equation}
Then
\begin{align}\label{eq3.21}
	\|\psi_j\|^2_2=\sum_{m\gs1}a^2_{m,j}&=\sum_{m\ls m_0}a^2_{m,j}+\sum_{m\gs m_0+1}^{\infty}a^2_{m,j},\\[1ex]
\label{eq3.22}
	\||\nabla \psi_j|\|^2_2&=\sum_{m\gs1}\lambda_{m,j}a^2_{m,j} \gs \sum_{m\gs m_0+1}\lambda_{m,j}a^2_{m,j}\\
	\notag &\gs \lambda_{m_0+1,j}\sum_{m\gs m_0+1}a^2_{m,j}.
\end{align}
By $\psi_j\to\psi_\infty$ and $\phi_{m,j}\to\phi_{m,\infty}$ in $L^2(B_R(p_j))$ as $j\to\infty,$ we have, for each $m\in \mathbb N$, that
$$\lim_{j\to\infty} a_{m,j}=\int_{B_R(p_\infty)}\psi_\infty\cdot\phi_{m,\infty}\du_\infty=0.$$
By combining with $\psi_j\to\psi_\infty$ in $H^1(B_R(p_j))$ and  $\lambda_{m,j}\to \lambda_{m,\infty}$ as $j\to\infty$, for any  given $\epsilon\in (0,1)$, there exists some $j_0=j_0(m_0,\epsilon)>0$ such that for all $j\gs j_0$ we have that
\begin{equation}\label{eq3.23}
\|\psi_j\|_2^2\gs 1-\epsilon, \quad \||\nabla \psi_j|\|_2^2\ls\sigma_\infty +\epsilon, \quad |a_{m,j}|\ls \epsilon,  \quad   \forall\ m\ls m_0+1,
\end{equation}
(where we have used $\|\psi_{\infty}\|_2=1$ and $\|\nabla \psi_{\infty}\|^2_2=\sigma_\infty$,) and that
\begin{equation}\label{eq3.24}
 \quad \ \lambda_{m_0+1,j}\gs \lambda_{m_0+1,\infty}-\epsilon\overset {(\ref{eq3.19})}\gs 2\sigma_\infty+1.
\end{equation}
  From (\ref{eq3.21})-(\ref{eq3.22}) and (\ref{eq3.23}), we get that, for all $j\gs j_0$,
\begin{equation*}
\qquad \sum_{m\gs m_0+1}a^2_{m,j}\gs 1-\epsilon-m_0 \epsilon^2,\quad \lambda_{m_0+1,j}\sum_{m\gs m_0+1}a^2_{m,j}\ls   \sigma_\infty+\epsilon.
\end{equation*}
The combination of  this and (\ref{eq3.24})  implies that
 \begin{equation*}
 1-\epsilon-m_0 \epsilon^2 \ls \frac{\sigma_\infty+\epsilon}{2\sigma_\infty+1}.
 \end{equation*}
This is impossible when $\epsilon$ is small enough. The proof is finished.
 \end{proof}
This Lemma states that $\{\lambda_{m,\infty},\phi_{m,\infty}\}_{m\in\mathbb N}$ is the \emph{complete} spectral system of  $\Delta^{(R)}_{\infty}$. Hence, the limit of the convergence in (\ref{eq3.13}) is unique and does not  depend  of the choice of subsequence $j_k$. So the Dirichlet heat kernel of $B_{R}(p_\infty)$, in fact, is
\begin{equation}\label{eq3.25}
H^{(R)}_{\infty}(x,y,t) =\sum_{m=1}e^{-\lambda_{m,\infty}t} \phi_{m,\infty}(x) \cdot\phi_{m,\infty}(y).
\end{equation}
Therefore, we have obtained:
\begin{thm}\label{thm3.8}
\hspace{-1ex}Assume that  $\partial B_R(p_\infty)\!=\!  \partial\big(X_\infty\backslash \overline{B_R(p_\infty)}\big).$\footnote{According to Remark \ref{rem2.4}(1), the assumption can be replaced by $${\rm Cap}_2\big (\partial B_R(p_\infty)\backslash  \partial\big(X_\infty\backslash \overline{B_R(p_\infty)}\big)\big)=0.$$} \
Let  $H^{(R)}_j(x,y,t)$ be the Dirichlet heat kernel on $B_R(p_j)$ for all $j\in\mathbb N\cup\{\infty\}$.
Then for any fixed  $t>0$,  we have
that $H^{(R)}_j(p_j,\cdot,t)\to H^{(R)}_\infty(p_\infty,\cdot,t)$ is in $L^2(B_R(p_j))$ and is also uniformly in $B_{R/2}(p_j)$, as $j\to\infty$.

In particular, the local spectral convergence, Proposition \ref{prop1.3}, holds.
\end{thm}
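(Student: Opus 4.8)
The plan is to deduce the heat-kernel convergence by passing to the limit \emph{term by term} in the spectral expansion $H^{(R)}_j(p_j,y,t)=\sum_m e^{-\lambda^{(R)}_{m,j}t}\phi^{(R)}_{m,j}(p_j)\phi^{(R)}_{m,j}(y)$, the sole quantitative ingredient being a tail bound uniform in $j$. First I would recall what is already available: by (\ref{eq3.13})--(\ref{eq3.14}), a diagonal extraction (so that the convergence holds for all $m$ at once), and Lemmas \ref{lem3.6}--\ref{lem3.7}, along a subsequence one has $\lambda^{(R)}_{m,j}\to\lambda_{m,\infty}$ and $\phi^{(R)}_{m,j}\to\phi_{m,\infty}$ (in $L^2(B_R(p_j))$ and uniformly on $B_r(p_j)$ for every $r\in(0,R)$), where $\{\lambda_{m,\infty},\phi_{m,\infty}\}_{m\in\mathbb N}$ is the \emph{complete} spectral system of $\Delta^{(R)}_\infty$, so $\sum_m e^{-\lambda_{m,\infty}t}\phi_{m,\infty}(x)\phi_{m,\infty}(y)=H^{(R)}_\infty(x,y,t)$; the limit objects in question (the eigenvalues and the kernel $H^{(R)}_\infty$) being independent of the subsequence, the full sequence will converge. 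Note also $\phi^{(R)}_{m,j}(p_j)\to\phi_{m,\infty}(p_\infty)$, since $\Phi_j(p_j)=p_\infty$ and the eigenfunction convergence is uniform near $p_\infty$.

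Next, fix $t>0$ and establish the tail bound: by Lemma \ref{lem3.4}, $\lambda^{(R)}_{m,j}\gs C'_1 m^{2/N}$, and by Lemma \ref{lem3.5}, $\|\phi^{(R)}_{m,j}\|_{L^\infty(B_R(p_j))}\ls C_{N,K,R}\lambda^{(R)}_{m,j}\ls C_{N,K,R}C'_2 m^2$, all constants depending only on $N,K,R$; hence, for every $j\in\mathbb N\cup\{\infty\}$,
\begin{equation*}
\sup_{y\in B_R(p_j)}\Big|\sum_{m>M}e^{-\lambda^{(R)}_{m,j}t}\phi^{(R)}_{m,j}(p_j)\phi^{(R)}_{m,j}(y)\Big|\ls C(N,K,R)\sum_{m>M}m^{4}e^{-C'_1 m^{2/N}t}=:\varepsilon_M(t),
\end{equation*}
and similarly $\sum_{m>M}\big(e^{-\lambda^{(R)}_{m,j}t}\phi^{(R)}_{m,j}(p_j)\big)^2=:\varepsilon'_M(t)$, with $\varepsilon_M(t),\varepsilon'_M(t)\to0$ as $M\to\infty$ uniformly in $j$. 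Decomposing $H^{(R)}_j(p_j,\cdot,t)$ into its $M$-th partial sum plus a remainder controlled by $\varepsilon_M(t)$ (pointwise on $B_R(p_j)$) and $\varepsilon'_M(t)$ (in $L^2(B_R(p_j))$), the partial sums converge as $j\to\infty$ to those of $H^{(R)}_\infty(p_\infty,\cdot,t)$, uniformly on $B_{R/2}(p_j)$ and in $L^2(B_R(p_j))$: indeed the coefficients $e^{-\lambda^{(R)}_{m,j}t}\phi^{(R)}_{m,j}(p_j)$ converge, $\phi^{(R)}_{m,j}\to\phi_{m,\infty}$ uniformly on $B_{R/2}(p_j)$ and in $L^2(B_R(p_j))$, the $L^2(B_R(p_j))$-norms of the partial sums equal $\sum_{m\ls M}(e^{-\lambda^{(R)}_{m,j}t}\phi^{(R)}_{m,j}(p_j))^2$ by orthonormality, and for fixed indices the uniformly bounded products $\phi^{(R)}_{m,j}\phi^{(R)}_{m',j}$ converge $\mu_\infty$-a.e., so the relevant inner products pass to the limit by the dominated convergence theorem on $pmGH$-converging spaces (Appendix A). A $3\varepsilon$ argument using $\varepsilon_M(t),\varepsilon'_M(t)$ then gives $H^{(R)}_j(p_j,\cdot,t)\to H^{(R)}_\infty(p_\infty,\cdot,t)$ uniformly on $B_{R/2}(p_j)$ and in $L^2(B_R(p_j))$ along the subsequence, hence, by uniqueness of the limit, along the full sequence.

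Finally, Proposition \ref{prop1.3} is the statement $\lim_{j\to\infty}\lambda^{(R)}_{m,j}=\lambda^{(R)}_{m,\infty}$: along any subsequence with $\lambda^{(R)}_{m,j}\to\tilde\lambda_m$, the $L^2(B_R(p_j))$-convergence $\phi^{(R)}_{m,j}\to\tilde\phi_m$ preserves $L^2$-orthonormality, so by Lemma \ref{lem3.7} the non-decreasing sequence $(\tilde\lambda_m)_m$ lists, with the correct multiplicities, exactly the Dirichlet spectrum of $B_R(p_\infty)$, forcing $\tilde\lambda_m=\lambda^{(R)}_{m,\infty}$; as the subsequence was arbitrary, the full sequence converges. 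The step I expect to need the most care — the only one that is not purely formal — is precisely this persistence of $L^2$-inner products and orthonormality under convergence on varying spaces (it is what prevents loss of spectral multiplicity in the limit and makes the partial sums $L^2$-convergent); it is supplied by the dominated convergence theorem of Appendix A, applied to the products of eigenfunctions, which for fixed indices are uniformly bounded and converge $\mu_\infty$-almost everywhere.
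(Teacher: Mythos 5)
Your proof is correct and takes essentially the same approach as the paper: the paper deduces Theorem \ref{thm3.8} directly from Lemmas \ref{lem3.4}--\ref{lem3.7} and the spectral expansion (\ref{eq3.25}), and your argument simply makes explicit the uniform-in-$j$ tail estimates (from the $m^{2/N}$ eigenvalue lower bound and the $L^\infty$ eigenfunction bound) and the term-by-term passage to the limit that the paper leaves implicit. The subsequence-independence and the deduction of Proposition \ref{prop1.3} from the completeness of the limit spectral system (Lemma \ref{lem3.7}) are also as in the paper.
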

Remark that an $L^2$-convergence theorem for  global  heat flows on\ $(X_j,d_j,\mu_j)$ was proved in \cite{gms15}.  An $H^1$-convergence theorem for local heat flows has been recently obtained by Ambrosio-Honda in \cite{ah17}.

\begin{lem}\label{lem3.9}
Let $(X,d,\mu,p)$ be a pointed metric measure space with\ $RCD^*(K,N)$ for some $K\in\mathbb R$ and $N\gs 3.$ Let  $H^{(R)}(x,y,t)$ be
the Dirichlet heat kernel on $B_R(p)$. Then, for any $t>0$, there exists a constant  some constant $C_{N,K,t}>0$  such that  for all $R'\gs R\gs R_0:=\max\big\{5t, \sqrt{5Nt/2}\big\}$, we have that
\begin{equation}\label{equa3.26}
\sup_{B_R(p)}\big|H^{(R')}(x,p,t)-H^{(R)}(x,p,t)\big|\ls \frac{C_{K,N,t}}{\mu(B_{\sqrt t}(p))}\cdot e^{-R},
\end{equation}
and that
\begin{equation}\label{equa3.27}
\int_{X\backslash B_R}H^{(R')}(x,p,t)\du(x)  \ls C_{N,K,t}\cdot e^{-R}.
\end{equation}
\end{lem}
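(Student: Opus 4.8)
The plan is to reduce both inequalities to estimates for the \emph{global} heat kernel $H$, and then to extract the decay from the Gaussian bound (\ref{equa2.5}) and the doubling property (\ref{equa2.4}). For the reduction, use the monotonicity of Dirichlet heat kernels in the domain: since $B_R(p)\subset B_{R'}(p)\subset X$, we have $H^{(R)}(x,p,t)\ls H^{(R')}(x,p,t)\ls H(x,p,t)$ for every $x$. Thus $0\ls H^{(R')}(\cdot,p,t)-H^{(R)}(\cdot,p,t)\ls H(\cdot,p,t)-H^{(R)}(\cdot,p,t)$ on $B_R(p)$ and $0\ls H^{(R')}(\cdot,p,t)\ls H(\cdot,p,t)$ on $X\backslash B_R(p)$, so it is enough to prove (\ref{equa3.26}) and (\ref{equa3.27}) with $H^{(R')}$ replaced by $H$; in particular the constants will automatically not depend on $R'$. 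I will also repeatedly use the elementary consequence of (\ref{equa2.4}) that $\int_{B_\rho(p)}\frac{\du(x)}{\mu(B_r(x))}\ls C_{N,K,\rho}\big(\tfrac{\rho}{r}\big)^N$ for $0<r\ls\rho$, where $C_{N,K,\rho}$ grows like $e^{c\rho}$ in $\rho$; this follows from (\ref{equa2.4}) by bounding $\mu(B_r(x))$ below in terms of $\mu(B_\rho(p))$ for $x\in B_\rho(p)$ and integrating.

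For (\ref{equa3.27}), decompose $X\backslash B_R(p)=\bigcup_{k\gs1}\big(B_{(k+1)R}(p)\backslash B_{kR}(p)\big)$; on the $k$-th annulus $d(x,p)\gs kR$, so (\ref{equa2.5}) gives $H(x,p,t)\ls \frac{C_{N,K}}{\mu(B_{\sqrt t}(x))}\exp\big(-\frac{k^2R^2}{5t}+C_{N,K}t\big)$, and integrating the weight $\mu(B_{\sqrt t}(x))^{-1}$ over the annulus via the inequality above bounds the $k$-th term by $C_{N,K}e^{C_{N,K}t}\exp\!\big(2\sqrt{(N-1)|K|}(k+1)R\big)\big(\tfrac{2(k+1)R}{\sqrt t}\big)^N e^{-k^2R^2/5t}$. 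Summing over $k$, the factor $e^{-k^2R^2/5t}$ dominates both the polynomial factor and the exponentially growing doubling constant; and the two conditions $R\gs 5t$ and $R^2\gs 5Nt/2$ defining $R_0$ are exactly what makes the surviving Gaussian factor $e^{-cR^2/t}$ (for $k=1$) absorbable into $e^{-R}$ times a constant depending only on $N,K,t$, which gives (\ref{equa3.27}).

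For (\ref{equa3.26}), the central point is the identity, for $x\in B_R(p)$,
\begin{equation*}
H(x,p,t)-H^{(R)}(x,p,t)=\mathbb{E}_x\big[\mathbf{1}_{\{\tau_R<t\}}\,H(X_{\tau_R},p,\,t-\tau_R)\big],
\end{equation*}
where $(X_s)$ is the diffusion associated with the strongly local, regular Dirichlet form on $X$ (it exists by standard Dirichlet form theory and has Gaussian bounds by \cite{stu95}), $\tau_R$ is its first exit time from $B_R(p)$, and $H^{(R)}$ is the transition density of the process killed on leaving $B_R(p)$; this is the strong Markov property at $\tau_R$. By continuity of paths $d(X_{\tau_R},p)=R$ on $\{\tau_R<t\}$, so by (\ref{equa2.5}), $H(X_{\tau_R},p,t-\tau_R)\ls \frac{C_{N,K}}{\mu(B_{\sqrt{t-\tau_R}}(X_{\tau_R}))}\exp\big(-\frac{R^2}{5(t-\tau_R)}+C_{N,K}(t-\tau_R)\big)$. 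Two applications of (\ref{equa2.4})---first $\mu(B_{\sqrt s}(z))\gs C^{-1}(s/t)^{N/2}\mu(B_{\sqrt t}(z))$ for $s\ls t$, then $\mu(B_{\sqrt t}(z))\gs C^{-1}\mu(B_{\sqrt t}(p))$ whenever $d(z,p)=R$, with $C=C_{N,K,R,t}$---reduce the denominator to $\mu(B_{\sqrt t}(p))$, and then the elementary fact $\sup_{0<s\ls t}(t/s)^{N/2}e^{-R^2/5s}=e^{-R^2/5t}$ (valid since $R^2/5t\gs N/2$, i.e.\ $R\gs R_0$) together with $R\gs 5t$ bound the whole right-hand side by $\frac{C_{N,K,t}}{\mu(B_{\sqrt t}(p))}e^{-R}$, uniformly in $z$ with $d(z,p)=R$ and $s\in(0,t]$. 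Since $\mathbb{E}_x[\mathbf{1}_{\{\tau_R<t\}}]=\mathbb{P}_x(\tau_R<t)\ls 1$, this yields (\ref{equa3.26}).

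I expect the main obstacle to be the clean justification of the exit-time identity in the non-smooth setting: it rests on the existence of the associated diffusion, the strong Markov property, and the identification of the local Dirichlet heat kernel $H^{(R)}$ with the killed transition density, which are available here because the Dirichlet form is strongly local, regular, and satisfies Gaussian estimates (see Section 2 and \cite{stu95}). An alternative, purely analytic route runs the parabolic maximum principle for the nonnegative heat-equation solution $(x,s)\mapsto H(x,p,s)-H^{(R)}(x,p,s)$ on $B_R(p)\times(0,t]$---whose boundary values on $\partial B_R(p)$ are controlled exactly as above---but this requires in addition the slightly delicate fact that $\sup_{x\in B_R(p)}\big(H(x,p,s)-H^{(R)}(x,p,s)\big)\to0$ as $s\to0^+$. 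A secondary technical point, shared by both estimates, is keeping track of the $K$-dependence of the doubling constant in (\ref{equa2.4}), which grows like $e^{c\rho}$ in the radius and must be (and is) dominated by the super-exponential Gaussian factor.
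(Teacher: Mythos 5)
Your overall strategy coincides with the paper's: use the domain monotonicity of Dirichlet heat kernels to reduce both estimates to bounds on the global kernel $H$, control the difference $H-H^{(R)}$ by its values on $\partial B_R(p)\times(0,t]$, and control the tail integral by an annular decomposition of $X\backslash B_R(p)$ against the Gaussian upper bound \eqref{equa2.5}. For \eqref{equa3.27} the two arguments are effectively identical (the paper uses the Bishop–Gromov inequality \eqref{equa2.2} to bound the annuli; you use the doubling bound \eqref{equa2.4}, but these give the same exponential control). For \eqref{equa3.26} you differ in two ways. First, you derive the boundary reduction from the strong Markov property at the exit time $\tau_R$ and the identification of $H^{(R)}$ with the killed transition density, whereas the paper invokes the parabolic weak maximum principle for $u(x,s)=H(x,p,s)-H^{(R)}(x,p,s)$ on $B_R(p)\times(0,t]$. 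These are two faces of the same fact; as you note, the analytic route leaves the behaviour of $u$ near $s=0^+$ to be checked, and the paper does gloss over this, so the probabilistic version you give is arguably the cleaner justification. Second, to replace $\mu(B_{\sqrt s}(z))$ (with $d(z,p)=R$) by $\mu(B_{\sqrt t}(p))$ in the denominator you apply \eqref{equa2.4} twice, which introduces a constant $C_{N,K,R,t}$ growing like $R^{N}e^{cR}$; you then have to check that the Gaussian factor $e^{-R^2/5t}$ absorbs it and still leaves $C_{N,K,t}\,e^{-R}$. That absorption does go through, since $(c+1)R+N\log R-R^2/(5t)$ is a downward parabola whose maximum over $R\gs R_0$ depends only on $N,K,t$, but it is an extra step. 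The paper avoids it entirely: by the symmetry $H(x,p,s)=H(p,x,s)$, the Gaussian bound \eqref{equa2.5} can be applied with the roles of $x$ and $p$ swapped, giving $\mu(B_{\sqrt s}(p))$ directly in the denominator with no $R$-dependence to absorb. Apart from this slight inefficiency, your argument is correct and reaches the stated conclusion.
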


\begin{proof}
Let $H(x,y,t)$  be the heat kernel on $(X,d,\mu)$. Recalling the monotonicity of heat kernels with respect to domains that
\begin{equation}\label{equa3.28}
  H^{(R)}(x,p,t)\ls H^{(R')}(x,p,t)\ls H(x,p,t),\quad \forall\ x\in B_R(p),
\end{equation}
 and combining with the upper bound of $H$, (\ref{equa2.5}), we have
 \begin{align}\label{equa3.29}
	&\quad\ \sup_{B_R}\big|H(x,p,t)-H^{(R)}(x,p,t)\big|\\
	\notag &\ls \sup_{\partial B_R\times(0,t] }\big|H(x,p,s)-H^{(R)}(x,p,s)\big|\\
\notag &\ls  \sup_{s\in(0,t] }\frac{C_{K,N}}{\mu(B_{\sqrt s})}\cdot \exp\left(-\frac{R^2}{5s}+C_{K,N}\cdot s\right)\\
\notag &\ls  C_{K,N}\cdot \exp\Big(C_{K,N}\cdot t\Big)\cdot \sup_{s\in(0,t] }\frac{1}{\mu(B_{\sqrt s})}\cdot \exp\left(-\frac{R^2}{5s}\right),
\end{align}
 where  $B_r:=B_r(p)$, and we have used the maximum principle for the first inequality, since both $H(\cdot, p,\cdot)$ and $H^{(R)}(\cdot, p,\cdot)$ are  weak solutions of the heat equation on $B_R\times(0,\infty)$ with the same initial data.
 From the local measure doubling property (\ref{equa2.4}), we have
\begin{equation}\label{equa3.30}
\frac{\mu(B_{\sqrt t}) }{\mu(B_{\sqrt s})}\ls C'_{N,K,t}\Big(\frac{t}{s}\Big)^{N/2},\ \quad \forall\ 0<s<t,
\end{equation}
for some constant $C'_{K,N,t}>0.$
 Let us put
$$v(s):=\frac{\exp\Big(-\frac{R^2}{5s}\Big)}{s^{N/2}},\ \quad\forall\ s\in(0,t].$$
If $R^2\gs  5Nt/2 $, then we by $\frac{{\rm d}}{{\rm d}s}\log v(s)=\frac{R^2}{5s^2}-\frac{N}{2s}=\frac{1}{5s^2}\big( R^2-\frac{5Ns}{2}\big)\gs0$ have that $v(s)\ls v(t)$ for all $0<s<t$. Hence,
\begin{equation}\label{equa3.31}
\sup_{s\in(0,t]}\frac{\exp\Big(-\frac{R^2}{5s}\Big)}{s^{N/2}}\ls \frac{\exp\Big(-\frac{R^2}{5t}\Big)}{t^{N/2}},
\end{equation}
provided $R \gs \sqrt{ 5Nt/2 }$. The combination of (\ref{equa3.29})--(\ref{equa3.31}) yields that
\begin{align}\label{equa3.32}
&\quad\ \sup_{B_R}\big|H(x,p,t)-H^{(R)}(x,p,t)\big|\\
\notag &\ls  C_{K,N}\cdot \exp\Big(C_{K,N}\cdot t\Big)\cdot\frac{C'_{N,K,t}}{\mu(B_{\sqrt t})}\cdot  \exp\Big(-\frac{R^2}{5t}\Big)\\
\notag &\ls\frac{C_{N,K,t}}{\mu(B_{\sqrt t})}\cdot e^{-R},\ \quad ({\rm by}\ \ R\gs 5t)
\end{align}
for any $R\gs R_0:=\max\big\{5t, \sqrt{5Nt/2}\big\}$. Now the assertion (\ref{equa3.26}) comes from (\ref{equa3.28}) and (\ref{equa3.32}).

The generalized Bishop-Gromov inequality (\ref{equa2.2}) for  $RCD^*(K,N)$-space implies that, for all $j\gs1$ and all $R\gs \sqrt t$,
\begin{equation*}\label{equa3.33}
\frac{\mu(B_{(j+1)R}\backslash B_{jR})}{\mu(B_{\sqrt t})}\ls \frac{\overline{\mu}\big((j+1)R\big)-\overline{\mu}(jR)}{\overline{\mu}(\sqrt t)}\ls\frac{e^{C'_{N,K}\cdot jR}}{\overline{\mu}(\sqrt t)}
\end{equation*}
for some constant $C'_{N,K},$ where $\overline{\mu}(s):=\int^s_0\mathfrak{s}^{N-1}_{\frac{K}{N-1} }(\tau){\rm d}\tau$ and   $\mathfrak{s}_k(\tau)$ is given in (\ref{equa2.3}).
Hence we have, for all $j\gs1$ and all $R\gs \sqrt t$ (which is ensured by $R\gs R_0$), that
\begin{align*}
&\quad\ \int_{X\backslash B_R}H(x,p,t)\du(x)\\
& \ls \frac{C_{K,N}}{\mu(B_{\sqrt t})}\cdot \sum_{j=1}^\infty\int_{ B_{(j+1)R}\backslash B_{jR}}\exp\left(-\frac{d^2(p,x)}{5t}+C_{K,N}\cdot t\right)\du(x)\\
&\ls C_{K,N} \sum_{j=1}^\infty \frac{\mu(B_{(j+1)R}\backslash B_{jR})}{\mu(B_{\sqrt t})}\cdot \exp\left(-\frac{(jR)^2}{5t}+C_{N,K}\cdot t\right) \\
&\ls  \frac{ C_{K,N}}{\overline{\mu}(\sqrt t)} \sum_{j=1}^\infty\cdot \exp\left(-\frac{(jR)^2}{10t}+C_{N,K}\cdot t+C'_{N,K}\cdot jR \right).
\end{align*}
The combination  of this and (\ref{equa3.28}) implies (\ref{equa3.27}). The proof is finished.
\end{proof}

As a consequence, we have the convergence of heat kernels as follows.
\begin{cor}\label{cor3.10}
For any $R$ large enough, we assume that  $\partial B_R(p_\infty)=  \partial\big(X_\infty\backslash \overline{B_R(p_\infty)}\big)$.

Let $R_j$ be a sequence such that $R_j\to\infty$ as $j\to\infty.$ Let $H^{(R_j)}_j(x,y,t)$ be the Dirichlet heat kernel on $B_{R_j}(p_j)$ for all $j\in\mathbb N\cup\{\infty\}$. Then for any fixed  $t>0$,  the convergence $H^{(R_j)}_j(\cdot,p_j,t)\to H_\infty(\cdot,p_\infty,t)$ holds in $L^1$ sense, and holds also uniformly in $B_{R}(p_j)$ for any fixed $R$, as $j\to\infty$.
\end{cor}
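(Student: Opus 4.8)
The plan is to compare, for a large but fixed intermediate radius $R$, the Dirichlet heat kernel on the growing ball $B_{R_j}(p_j)$ with the one on the fixed ball $B_R(p_j)$, to apply the local convergence Theorem \ref{thm3.8} to the latter, and to absorb the two ``cut-off errors'' using Lemma \ref{lem3.9}. Fix $t>0$ and set $R_0:=\max\{5t,\sqrt{5Nt/2}\}$. For a radius $R\gs R_0$ at which $\partial B_R(p_\infty)=\partial\big(X_\infty\backslash\overline{B_R(p_\infty)}\big)$ holds---the hypothesis guarantees such $R$ are cofinal---and for $j$ large enough that $R_j\gs R$, I would write, at matched points $x\in B_R(p_j)$ and $\Phi_j(x)\in B_R(p_\infty)$ under the $mGH$-approximation,
$$
\big|H^{(R_j)}_j(x,p_j,t)-H_\infty(\Phi_j(x),p_\infty,t)\big|\ \ls\ \mathrm{(I)}+\mathrm{(II)}+\mathrm{(III)},
$$
where $\mathrm{(I)}=\big|H^{(R_j)}_j(x,p_j,t)-H^{(R)}_j(x,p_j,t)\big|$, $\mathrm{(II)}=\big|H^{(R)}_j(x,p_j,t)-H^{(R)}_\infty(\Phi_j(x),p_\infty,t)\big|$ and $\mathrm{(III)}=\big|H^{(R)}_\infty(\Phi_j(x),p_\infty,t)-H_\infty(\Phi_j(x),p_\infty,t)\big|$.

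For the two outer terms I would invoke Lemma \ref{lem3.9}. Estimate (\ref{equa3.26}) applied on $X_j$ with $R'=R_j$ gives $\mathrm{(I)}\ls C_{K,N,t}\,e^{-R}/\mu_j\big(B_{\sqrt t}(p_j)\big)$, while letting $R'\to\infty$ in (\ref{equa3.26}) on $X_\infty$ (recall $H^{(R')}_\infty\nearrow H_\infty$) gives $\mathrm{(III)}\ls C_{K,N,t}\,e^{-R}/\mu_\infty\big(B_{\sqrt t}(p_\infty)\big)$. The denominator $\mu_j\big(B_{\sqrt t}(p_j)\big)$ is bounded below uniformly in $j$: combining the Bishop--Gromov inequality (\ref{equa2.4}) with the weak convergence $\mu_j\big(B_\rho(p_j)\big)\to\mu_\infty\big(B_\rho(p_\infty)\big)>0$ at a fixed radius $\rho>\sqrt t$ yields $\inf_j\mu_j\big(B_{\sqrt t}(p_j)\big)>0$. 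Hence, given $\varepsilon>0$ and a prescribed fixed radius $\bar R$, I can choose $R\gs\max\{R_0,2\bar R\}$ depending only on $N,K,t,\varepsilon$ so that $\mathrm{(I)},\mathrm{(III)}<\varepsilon/3$ for all large $j$ and all $x\in B_R(p_j)$; the middle term $\mathrm{(II)}$ tends to $0$ uniformly on $B_{R/2}(p_j)\supset B_{\bar R}(p_j)$ by Theorem \ref{thm3.8} (together with the symmetry of the Dirichlet heat kernel). This establishes the uniform convergence on every fixed ball.

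For the $L^1$ statement I would split $\int_{X_j}=\int_{B_R(p_j)}+\int_{X_j\backslash B_R(p_j)}$. On $B_R(p_j)$ the uniform convergence just obtained, together with the weak convergence of $\mu_j|_{B_R(p_j)}$ to $\mu_\infty|_{B_R(p_\infty)}$, shows that the restrictions converge in $L^1$ over the fixed ball. The tails are uniformly negligible: estimate (\ref{equa3.27}) on $X_j$ with $R'=R_j$ gives $\int_{X_j\backslash B_R(p_j)}H^{(R_j)}_j(x,p_j,t)\du_j\ls C_{K,N,t}\,e^{-R}$, and its $R'\to\infty$ version gives the same bound for $\int_{X_\infty\backslash B_R(p_\infty)}H_\infty(x,p_\infty,t)\du_\infty$. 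Letting first $j\to\infty$ and then $R\to\infty$, and using the dominated-convergence/Vitali machinery for functions on $pmGH$-converging spaces from Appendix A, yields the $L^1$-convergence.

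The hard part will not be any single estimate but the bookkeeping: matching $\mathrm{(I)}$--$\mathrm{(III)}$ at corresponding points of the varying spaces, checking that Lemma \ref{lem3.9} applies with constants uniform in $j$ (which reduces to the uniform lower bound $\inf_j\mu_j\big(B_{\sqrt t}(p_j)\big)>0$), and arranging that the hypothesis of Theorem \ref{thm3.8} holds along a cofinal set of intermediate radii $R$. Once these points are secured, the triangle inequality together with Lemma \ref{lem3.9} and Theorem \ref{thm3.8} close the argument.
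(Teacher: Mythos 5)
Your argument is correct and is exactly the intended one: the paper's one-line proof ``combination of Theorem~\ref{thm3.8} and Lemma~\ref{lem3.9}'' is precisely the three-term triangle-inequality decomposition you carry out, with Lemma~\ref{lem3.9} (estimates (\ref{equa3.26}) for the two outer terms and (\ref{equa3.27}) for the $L^1$ tails) controlling the cut-off from $R_j$ and $\infty$ down to a fixed intermediate radius $R$, and Theorem~\ref{thm3.8} handling the middle term on $B_{R/2}(p_j)$. The side checks you flag---uniformity of the constant in Lemma~\ref{lem3.9} via a lower bound on $\mu_j(B_{\sqrt t}(p_j))$ coming from weak measure convergence and doubling, the symmetry of the heat kernel, and the availability of large $R$ at which the boundary hypothesis of Theorem~\ref{thm3.8} holds---are all correct and are exactly the points left implicit in the paper.
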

\begin{proof}
It comes immediately from the combination of Theorem \ref{thm3.8} and Lemma \ref{lem3.9}.
\end{proof}
This corollary in the special case where $(M^n_j,g_j,vol_j,p_j)$ are a sequence\ of smooth $n$-dimensional Riemannian manifolds with $Ric\gs -k$ and\ $vol_j(B_1(p_1))\gs v_0>0$, was earlier  obtained by Ding \cite{ding02}.  A  pointed converging theorem for global heat kernels on $RCD^*(K,N)$ spaces was recently given in \cite{aht17}.

\begin{rem}\label{rem3.11}
Let $X:=C(Y)$ be the cone over space $Y$ such that\ $(X,d,\mu,o_Y)$ is an $RCD^*(0,N)$ space (with the cone metric and cone measure), where $o_Y$ is the vertex. Then, for any ball $B_R(o_Y)$, we have  $\partial B_R(o_Y)=  \partial\big(X\backslash \overline{B_R(o_Y)}\big)$. In particular, it holds for any  Euclidean space.
\end{rem}

\section{Weyl's law}

In this section, we fix a metric measure space $(X,d,\mu)$ satisfying\ $RCD^*(K,N)$ for some $K\in\mathbb R$ and $N\in[1,\infty).$ Without loss the generality, we can assume that $K\ls0$  and $N\gs3$ in the following.

Let $ p\in X$ and  $r\in(0,1)$, we consider the rescaled and normalized pointed metric measure space $(X,d_r,\mu_{r}^{  p},  p)$, where
\begin{equation}\label{eq4.1}
\begin{aligned}
&d_r(\cdot,\cdot):=r^{-1}d(\cdot,\cdot),\quad  \mu_{r}^{  p}:=\frac{\mu}{b(  p,r)}\quad {\rm and}\\
&b(  p,r):=\int_{B_{r}(  p)}\left(1-\frac{d(  p,x)}{r}\right)\du(x).
\end{aligned}
\end{equation}
By the measure doubling property, we have $$\mu\big(B_{r}(p)\big)\gs b(p,r)\gs \frac{1}{2C_D}\mu\big(B_{r}(p)\big),$$
where $C_D$ is the doubling constant on $B_r(p)$. Indeed,   for any $r>0$, we have
\begin{equation*}
\begin{split}
 \int_{B_r(p)}d(p,x)\du(x)&\ls \frac{r}{2}\cdot \mu\big(B_{r/2}(p)\big) +r\cdot \mu\big(B_{r}(p)\backslash B_{r/2}(p)\big)  \\ &=r\cdot \mu\big(B_{r}(p)\big)-\frac{r}{2}\cdot\mu\big(B_{r/2}(p)\big)\ls \left(r-\frac{r}{2C_D}\right)\mu\big(B_{r}(p)\big).
\end{split}
 \end{equation*}
This implies immediately that
$$b(p,r)=\mu\big(B_{r}(p)\big)-\frac 1 r\int_{B_r(p)}d(p,x)\du(x) \gs  \frac{1}{2C_D} \mu\big(B_{r}(p)\big).$$

 \begin{defn}[Tangent cones]\label{definition4.1}
\emph{ Let $(X,d,\mu)$ be a metric measure space and let $  p\in X$. A pointed metric measure space $(Y,d_Y,{\rm m}_{Y}, y)$ is called a \emph{ tangent cone of  $(X,d,\mu)$ at $ p$} if there exists a sequence   $\{r_j\}_{j\in\mathbb N}$ with $r_j\to 0$, as $j\to\infty$, such that
 $$(X,d_{r_j},\mu_{r_j}^{  p}, p)\overset{pmGH}{\to}(Y,d_Y,{\rm m}_{Y}, y).$$
 The set of all the tangent cones at $  p$  is denoted by ${\rm Tan}(X,d,\mu,  p).$ Remark that a tangent cone at $ p$  may depend on the choice of the sequence  $\{r_j\}$.}
\end{defn}
A point $ p\in X$, is  called a $k$-regular point if the tangent cones at $p$ is unique and if
\begin{equation}\label{eq4.2}
{\rm Tan}(X,d,\mu,  p)=\big\{\big(\mathbb R^{k},d_{\rm E}, \mathscr L_{k},0\big)\big\},
\end{equation}
   where
$d_{\rm E}$ is the standard Euclidean metric of $\mathbb R^k$ and $\mathscr L^k$ is the $k$-dimensional Lebesgue measure normalized so that
$\int_{B_1(0)}(1- |x|)d\mathscr L^k(x) = 1.$ We denote by $\mathcal R_k:=$ all of $k$-regular points of $(X,d,\mu)$.

Very recently, a structure theorem of  $RCD^*(K,N)$-spaces has been given by Mondino-Naber \cite{mn14}, and  by Kell-Mondino \cite{km16},  Gigli-Pasqualetto \cite{gp16} and De Philippis al. \cite{dpmr16}.
\begin{thm}\label{thm4.2}
Let $(X,d,\mu)$ be a metric measure space satisfying\ $RCD^*(K,N)$ for some $K\in\mathbb R$ and some $N\in[1,\infty)$. Then we have\\
\indent\emph{ (i) (\cite[Theorem 6.7]{mn14})}. $\mu\big(X\backslash \cup_{1\ls k\ls [N]}\mathcal R_k\big)=0$, where $[N]:=\max\{n\in\mathbb N:\ n\ls N\}$;\\
\indent\emph{ (ii) (\cite[Theorem 1.3]{mn14})}. Each $\mathcal R_k$ is $k$-rectifiable. More precisely, for every $\epsilon>0$, we can cover $\mathcal R_k$, up to an $\mu$-negligible subset, by a countable collection of sets $U^{k,\ell}_\epsilon$, $\ell\in\mathbb N,$ with the property that  each    $U^{k,\ell}_\epsilon$ is $(1+\epsilon)$-bilipschitz to a subset of $\mathbb R^k$; \\
\indent\emph{ (iii) (\cite{km16,gp16,dpmr16})}. For each $U^{k,\ell}_\epsilon$ in above (ii), the measure $\mu|_{U^{k,\ell}_\epsilon}\ll\mathscr H^k$, the $k$-dimensional Hausdorff measure.
\end{thm}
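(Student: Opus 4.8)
The three assertions are precisely the structure results of \cite{mn14,km16,gp16,dpmr16}, so the plan is only to recall the architecture of their proofs. First I would establish that tangent cones exist everywhere and are well behaved: the local doubling property (\ref{equa2.4}) together with the local $L^2$-Poincar\'e inequality make the rescaled spaces $(X,d_{r},\mu_{r}^{p},p)$ precompact for $pmGH$-convergence as $r\to0$, so ${\rm Tan}(X,d,\mu,p)\ne\varnothing$; since the curvature bound rescales correctly and $RCD^*$ is stable under $pmGH$-limits, every tangent cone is an $RCD^*(0,N)$ space, and if it contains a line it splits off an $\mathbb R$-factor by Gigli's splitting theorem \cite{gig13}. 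Iterating blow-ups and splittings, $\mu$-a.e.\ $p$ admits a tangent cone of the form $\mathbb R^{k}\times Z$ with $Z$ line-free; being a $pmGH$-limit of $RCD^*(0,N)$ spaces, $\mathbb R^{k}$ forces $k\ls N$, i.e.\ $k\ls[N]$.

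Next I would run the quantitative blow-up analysis of Mondino--Naber. On the set where $k$ is maximal with positive $\mu$-measure one constructs, at small scales, harmonic $\delta$-splitting maps $u=(u_1,\dots,u_k)\colon B_r(p)\to\mathbb R^{k}$, using the Bochner inequality ($BE(K,N)$) to control $\ip{\nabla u_i}{\nabla u_j}-\delta_{ij}$ and the Hessians of the $u_i$ in $L^2$. A maximal-function plus dimension-reduction argument then shows that for $\mu$-a.e.\ such $p$ the rescalings of $u$ converge to an isometry, so the tangent cone is exactly $\mathbb R^{k}$ and unique, i.e.\ $p\in\mathcal R_k$; combined with the previous paragraph this gives (i). For (ii), the same maps, for $\delta$ small, are $(1+\epsilon)$-biLipschitz onto their image on a subset of $B_r(p)$ of relative measure $\gs1-\epsilon$ --- the quantitative rigidity in the almost-splitting theorem upgrades the $L^2$-closeness to a.e.\ pointwise biLipschitz bounds, and Egorov's theorem produces the definite fraction; exhausting $\mathcal R_k$ by countably many such balls yields the charts $U^{k,\ell}_\epsilon$. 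For (iii) I would use that $(X,d,\mu)$ is a PI space, so Cheeger's differentiation theory applies, and combine the biLipschitz charts of (ii) with the rigidity theorem of De Philippis--Rindler on measures, as in \cite{km16,gp16,dpmr16}, to conclude $\mu|_{U^{k,\ell}_\epsilon}\ll\mathscr H^{k}$.

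I expect the blow-up analysis behind (i) to be the main obstacle: producing \emph{uniqueness} of the Euclidean tangent cone on a full-measure set, not merely along subsequences, requires the quantitative Reifenberg/maximal-function machinery of \cite{mn14}, and the same estimates are exactly what make the biLipschitz charts of (ii) work; the passage to (iii) is then a comparatively soft consequence of PI-space theory once (ii) is in hand.
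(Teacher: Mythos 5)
Theorem \ref{thm4.2} is recorded in the paper purely as a citation of \cite{mn14,km16,gp16,dpmr16}, with no proof given, and your proposal likewise defers to those references while sketching their architecture. Your outline of the precompactness of blow-ups, the iterated splitting via Gigli's theorem, the Mondino--Naber $\delta$-splitting/maximal-function machinery that yields both $\mu$-a.e.\ uniqueness of Euclidean tangent cones for (i) and the $(1+\epsilon)$-biLipschitz charts for (ii), and the De Philippis--Rindler measure rigidity behind (iii), is an accurate account of what those references do, so this is essentially the same approach the paper takes.
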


Let us recall that the \emph{$k$-dimensional density function} of $\mu$ ($k\gs1$), $\theta_{k}: X\to [0,\infty]$ is defined by
\begin{equation}\label{eq4.3}
\theta_k(p)=\theta_k(\mu,p):  =\lim_{r\to0}\frac{\mu\big(B_r(p)\big)}{\omega_k\cdot  r^k}.
\end{equation}
where  $\omega_k$ is the volume of the unit ball in $\mathbb R^k$ (under the standard Lebesgue's measure).
From Theorem 4.2 (ii) and (iii), we conclude  that, for $\mu$-almost all $p\in \mathcal R_k$, the limit  (\ref{eq4.3}) exists and is in $(0,\infty)$, and that
\begin{equation}\label{equa4.4}
\mu|_{\mathcal R_k}=\theta_k\cdot\mathscr H^k.
\end{equation}
Indeed, by the fact $\mu|_{\mathcal R_k}$ is a $k$-rectifiable measure, we by \cite[Theorem 5.4]{ak00} have that $\mu|_{\mathcal R_k}=\widetilde{\theta}_k\cdot\mathscr H^k$ for some non-negative $\mathscr H^k$-integrable function $\widetilde{\theta}_k$, and moreover,
$$\widetilde{\theta}_k(p)= \lim_{r\to0}\frac{\mu(B_r(p)\cap \mathcal R_k)}{\omega_k\cdot r^k},\quad \mathscr H^k-{\rm a.e.}\ p\in \mathcal R_k.$$
On the other hand, $\mathcal R_k$ has density 1 $\mu$-almost all $p\in \mathcal R_k$, that is $$\lim_{r\to0}\frac{\mu(B_r(p)\cap \mathcal R_k)}{\mu(B_r(p))}=1,\quad \mu-{\rm a.e.}\ \ p\in \mathcal R_k.$$
By using the fact $\mu|_{\mathcal R_k}\ll \mathscr H^k$ again and combining with the above two equalities, we get that, for $\mu$-almost all $p\in \mathcal R_k$, the limit of (\ref{eq4.3}) exists and $\theta_k(p)=\widetilde{\theta}_k(p)$ for $\mu$-almost all points in $\mathcal R_k$ (see also \cite[Theorem 2.12]{dg18}).

\begin{lem}\label{lem4.3}
For   $\mu$-almost all $p\in \mathcal R_k$, we have
\begin{equation}\label{equa4.5}
\lim_{r\to0}\frac{b(p,r)}{r^{k}}=\frac{\theta_k(p)\omega_k}{k+1}.
\end{equation}
\end{lem}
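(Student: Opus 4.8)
The plan is to derive \eqref{equa4.5} from the known asymptotics of $\mu(B_r(p))$ at a $k$-regular point together with an integration-by-parts (layer-cake) rewriting of $b(p,r)$. First I would recall that, by \eqref{eq4.3} and the discussion following \eqref{equa4.4}, for $\mu$-almost every $p\in\mathcal R_k$ the density $\theta_k(p)=\lim_{r\to0}\mu(B_r(p))/(\omega_k r^k)$ exists and lies in $(0,\infty)$. The key observation is the elementary identity
\begin{equation*}
b(p,r)=\int_{B_r(p)}\Big(1-\frac{d(p,x)}{r}\Big)\du(x)=\frac1r\int_0^r \mu\big(B_\tau(p)\big)\,{\rm d}\tau,
\end{equation*}
which follows from writing $1-d(p,x)/r=\frac1r\int_{d(p,x)}^r{\rm d}\tau$ and applying Fubini's theorem (all integrands being nonnegative). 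Thus the problem reduces to the asymptotics of the average $\frac1r\int_0^r\mu(B_\tau(p))\,{\rm d}\tau$.

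The second step is to pass the limit inside this average. Fix a $p\in\mathcal R_k$ for which $\theta_k(p)\in(0,\infty)$ exists, and write $\mu(B_\tau(p))=\omega_k\theta_k(p)\tau^k(1+o(1))$ as $\tau\to0$. Then
\begin{equation*}
\frac{b(p,r)}{r^{k}}=\frac{1}{r^{k+1}}\int_0^r\mu\big(B_\tau(p)\big)\,{\rm d}\tau
=\frac{\omega_k\theta_k(p)}{r^{k+1}}\int_0^r\tau^k\big(1+o(1)\big)\,{\rm d}\tau.
\end{equation*}
To make the $o(1)$ rigorous: given $\varepsilon>0$ choose $r_0>0$ so that $(1-\varepsilon)\omega_k\theta_k(p)\tau^k\le\mu(B_\tau(p))\le(1+\varepsilon)\omega_k\theta_k(p)\tau^k$ for all $\tau\le r_0$; integrating over $[0,r]$ for $r\le r_0$ and dividing by $r^{k+1}$ gives $(1-\varepsilon)\frac{\omega_k\theta_k(p)}{k+1}\le \frac{b(p,r)}{r^k}\le(1+\varepsilon)\frac{\omega_k\theta_k(p)}{k+1}$. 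Letting $r\to0$ and then $\varepsilon\to0$ yields \eqref{equa4.5}.

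I expect no serious obstacle here: the only points needing care are the Fubini/layer-cake rewriting of $b(p,r)$ (routine, since $\mu$ is a Radon measure and the integrand is nonnegative and bounded) and the a.e. existence and finiteness of $\theta_k(p)$ on $\mathcal R_k$, which is already established in the paragraph preceding the lemma via Theorem~\ref{thm4.2}(ii)--(iii) and \cite[Theorem 5.4]{ak00}. One should also note $b(p,r)>0$ for $r$ small (guaranteed by the doubling bound $b(p,r)\ge\frac{1}{2C_D}\mu(B_r(p))$ proved just above), so the quotient in \eqref{equa4.5} is well-defined. The mild subtlety worth a sentence is that the $o(1)$ estimate for $\mu(B_\tau(p))$ is only locally uniform in $\tau$ near $0$ for the fixed point $p$, which is exactly what the two-sided bound above uses, so no uniformity in $p$ is required.
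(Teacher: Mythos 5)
Your proof is correct and in fact cleaner than the paper's. You both start from the known density asymptotics $\mu(B_\tau(p))=\omega_k\theta_k(p)\tau^k(1+o(1))$ for $\mu$-a.e.\ $p\in\mathcal R_k$, but you then rewrite $b(p,r)$ by Fubini as $\frac1r\int_0^r\mu(B_\tau(p))\,{\rm d}\tau$ and pass the asymptotic through the average with a two-sided $\varepsilon$-squeeze, which is entirely rigorous. The paper instead differentiates $r\mapsto\mu(B_r(p))$ (using local Lipschitzness from Bishop--Gromov) and asserts $A_p(r):=\frac{d}{dr}\mu(B_r(p))=\theta_k(p)\omega_k\,k\,r^{k-1}(1+o(1))$, then integrates $\int_0^r s\,A_p(s)\,{\rm d}s$. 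As stated, that intermediate claim about $A_p(r)$ does not follow from the asymptotic on $\mu(B_r(p))$ alone (a small perturbation like $r^k(1+r\sin r^{-2})$ has a derivative with no such asymptotic), although the \emph{integrated} statement the paper needs is recoverable by integration by parts, which is exactly equivalent to your Fubini identity $\int_0^r s\,A_p(s)\,{\rm d}s = r\mu(B_r(p))-\int_0^r\mu(B_s(p))\,{\rm d}s$. So your route both avoids the dubious pointwise differentiation step and makes transparent why the $1/(k{+}1)$ appears, at the modest cost of writing out the Tonelli computation.
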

\begin{proof}Let $p\in\mathcal R_k$ such that the limit of  (\ref{eq4.3}) exists and is in  $(0,\infty).$
By (\ref{eq4.3}), we have
\begin{equation}\label{equa4.6}
 \mu\big(B_{r}(p)\big)=\theta_k(p)\omega_k \cdot r^k\cdot \big(1+o(1)\big).
\end{equation}
From (\ref{equa2.2}), it is clear that $r\mapsto\mu\big(B_{r}(p)\big)$ is locally Lipschitz on $(0,R)$ for any $R>0$. So we get  by (\ref{equa4.6})  that for almost all $r\in(0,R)$,
 $$\frac{d}{dr} \mu\big(B_{r}(p)\big):=A_p(r)=  \theta_k(p)\omega_k\cdot k\cdot r^{k-1}\cdot\big(1+o(1)\big),$$
and hence
$$ \int_{B_r(p)}d(p,x)\du(x)=\int_0^rs\cdot A_p(s){\rm d}s=\frac{k\cdot \theta_k(p)\omega_k}{k+1}\cdot r^{k+1}\cdot \big(1+o(1)\big).$$
Therefore, from   (\ref{equa4.6}) and the definition of $b(p,r)$ in (\ref{eq4.1}), we conclude
\begin{equation*}
b(p,r)=\frac{\theta_k(p)\omega_k}{k+1}\cdot r^{k}\cdot \big(1+o(1)\big).
\end{equation*}
This is (\ref{equa4.5}), and  the proof is finished.
\end{proof}

Let $\Omega\subset X $ be a bounded open subset and let $H^{\Omega}(x,y,t)$ be the Dirichlet heat kernel on $\Omega$.
\begin{lem}\label{lem4.4}
For $\mu$-almost all   $p\in \mathcal R_k\cap \Omega$, we have
\begin{equation}\label{equa4.7}
\lim_{t\to0}H^\Omega(p,p,t)\cdot t^{k/2}= \frac{1}{\theta_k(p)\cdot(4\pi)^{k/2}}.
\end{equation}
\end{lem}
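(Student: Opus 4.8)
The plan is to obtain the small-time asymptotics of $H^\Omega(p,p,t)$ by blowing up $(X,d,\mu)$ at $p$, transporting the problem to the tangent cone $\R^k$, and then invoking the convergence of Dirichlet heat kernels from Section 3.

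First I would record the behaviour of the Dirichlet heat kernel under the rescaling in (\ref{eq4.1}). Let $H_r^U$ denote the Dirichlet heat kernel of a domain $U$ with respect to $(X,d_r,\mu_r^p)$. A direct computation with the Cheeger energy (using $|\nabla_{d_r}f|=r|\nabla f|$ and $\du_{r}^{p}=\du/b(p,r)$) shows that the Dirichlet Laplacian $\Delta_U$ of $(X,d_r,\mu_r^p)$ equals $r^2$ times the Dirichlet Laplacian $\Delta_U$ of $(X,d,\mu)$; consequently $H_r^U(x,y,\tau)=b(p,r)\,H^U(x,y,r^2\tau)$ for all $\tau>0$. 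Taking $U=\Omega$, $\tau=1$, and writing $t=r^2$ gives, for every $r\in(0,1)$,
\begin{equation*}
H^\Omega(p,p,t)\cdot t^{k/2}=\frac{r^{k}}{b(p,r)}\cdot H_r^\Omega(p,p,1),\qquad r=\sqrt t.
\end{equation*}
Since $K\ls0$ and $r<1$, each $(X,d_r,\mu_r^p)$ again satisfies $RCD^*(K,N)$, and the normalization in (\ref{eq4.1}) is exactly the one making the rescaled spaces converge to the normalized model in (\ref{eq4.2}).

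Next I would fit $\Omega$ between two metric balls. Take $p$ in the full-measure subset of $\mathcal R_k$ on which the density $\theta_k(p)$ exists and lies in $(0,\infty)$; then $(X,d_r,\mu_r^p,p)\overset{pmGH}{\longrightarrow}(\R^k,d_{\rm E},\mathscr L^k,0)$ as $r\to0$, and this holds along \emph{every} sequence $r_j\searrow0$ because the tangent cone at $p$ is unique, by (\ref{eq4.2}). Choose $\delta,\rho>0$ with $B_\delta(p)\subset\Omega\subset B_\rho(p)$ (balls with respect to $d$); by the domain monotonicity of Dirichlet heat kernels,
\begin{equation*}
H_r^{B_\delta(p)}(p,p,1)\ls H_r^{\Omega}(p,p,1)\ls H_r^{B_\rho(p)}(p,p,1).
\end{equation*}
With respect to $d_r$ the two outer domains are balls of radii $\delta/r$ and $\rho/r$, both $\to\infty$ (note ${\rm diam}(X,d_r)={\rm diam}(X,d)/r\to\infty$, so these are proper subballs for $r$ small). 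Applying Corollary \ref{cor3.10} to the radii $R_j=\delta/r_j$ and to $R_j=\rho/r_j$ — its boundary hypothesis on large balls of $\R^k$ holds by Remark \ref{rem3.11} — and evaluating at the base point, both outer quantities converge as $j\to\infty$ to $G(0,0,1)$, where $G$ is the global heat kernel of $(\R^k,d_{\rm E},\mathscr L^k)$. By squeezing, $\lim_{r\to0}H_r^\Omega(p,p,1)=G(0,0,1)$.

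Finally I would identify the constant and combine. The normalization in (\ref{eq4.2}) forces $\mathscr L^k=\tfrac{k+1}{\omega_k}\mathcal L^k$, since $\int_{B_1(0)}(1-|x|)\,{\rm d}x=\omega_k/(k+1)$; as the Euclidean heat kernel with respect to Lebesgue measure is $(4\pi\tau)^{-k/2}e^{-|x-y|^2/4\tau}$ and rescaling the reference measure by a positive constant divides the kernel density by that constant, $G(0,0,1)=\tfrac{\omega_k}{k+1}(4\pi)^{-k/2}$. Together with $r^{k}/b(p,r)\to(k+1)/(\theta_k(p)\omega_k)$ from Lemma \ref{lem4.3} and the displayed identity above, I get
\begin{equation*}
\lim_{t\to0}H^\Omega(p,p,t)\cdot t^{k/2}=\frac{k+1}{\theta_k(p)\omega_k}\cdot\frac{\omega_k}{k+1}(4\pi)^{-k/2}=\frac{1}{\theta_k(p)(4\pi)^{k/2}},
\end{equation*}
which is (\ref{equa4.7}). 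The step I expect to be the main obstacle is the limit in the squeeze: one must know that the Dirichlet heat kernels of the balls $B_{\delta/r_j}(p)$ of the rescaled spaces, whose radii blow up, converge at a fixed time to the \emph{global} heat kernel of the limit $\R^k$ — precisely what Corollary \ref{cor3.10} (built on the local spectral convergence, Theorem \ref{thm3.8}) supplies, and the point where the boundary-regularity hypothesis is needed, though here it is automatic because the blow-up is Euclidean. Sandwiching between an inner and an outer ball is what reduces a general domain $\Omega$ to the ball case covered by Corollary \ref{cor3.10}.
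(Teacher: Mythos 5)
Your proposal is correct and follows essentially the same route as the paper: rescale by $b(p,\sqrt t)$ and $\sqrt t$ so that the problem reduces to evaluating $H^{(\cdot)}_r(p,p,1)$ on the blow-up, pass to the tangent cone $(\R^k,d_E,\mathscr L^k,0)$ via Corollary~\ref{cor3.10} with radii tending to infinity (Remark~\ref{rem3.11} supplying the boundary hypothesis), sandwich $\Omega$ between two balls to handle a general domain, and identify the constant $\tfrac{\omega_k}{k+1}(4\pi)^{-k/2}$ from the normalized Lebesgue measure, finishing with Lemma~\ref{lem4.3}. The only cosmetic difference is that you sandwich $\Omega$ between balls before invoking the blow-up, while the paper first proves the ball case and then sandwiches; the substance is identical.
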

\begin{proof}
By Lemma \ref{lem4.3}, it suffices to show that, for $\mu$-almost all   $p\in \mathcal R_k\cap \Omega$,
\begin{equation}\label{equa4.8}
\lim_{t\to0}H^{\Omega}(p,p,t)\cdot b(p,\sqrt t)=\frac{\omega_k}{k+1}\cdot (4\pi)^{-k/2}.
\end{equation}

We shall first consider the case where $\Omega=B_R(p)$ for some $R>0.$

Given any $\alpha,\beta>0$, we denote by $H_{\alpha,\beta}^{(\alpha R)}(x,y,t)$ the Dirichlet heat kernel on $B_{\alpha R}(p)\subset (X,\alpha d,\beta  \mu,p),$ the rescaled space. It is clear that
\begin{equation}\label{equa4.9}
H_{\alpha,\beta}^{(\alpha R)}(x,y,t)=\frac 1 \beta \cdot H^{(R)}(x,y,\frac{t}{\alpha^2}).
\end{equation}
By taking any sequence $r_j\searrow0$ as $j\to\infty$ and choosing
$\alpha:=r^{-1}_j$, $\beta:=b^{-1}(p,r_j),$ by using Corollary \ref{cor3.10} (and Remark \ref{rem3.11}) and  the definition of $\mathcal R_k$, we get
\begin{equation*}
\lim_{j\to\infty} H^{(R)}(p,p,r^2_j)\cdot b(p,r_j)=\lim_{j\to\infty}H_{\alpha,\beta}^{(\alpha R)}(p,p,1)=b_k\cdot (4\pi)^{-k/2},
\end{equation*}
where
$$  b_k:=\int_{B_1(0^k)}\big(1-|x|\big){\rm d}x=\frac{\omega_k}{k+1}.$$
Hence, we get
\begin{equation}\label{eq4.10}
\lim_{t\to0}H^{(R)}(p,p,t)\cdot b(p,\sqrt t) =\frac{\omega_k}{k+1}\cdot (4\pi)^{-k/2}.
\end{equation}

Secondly, we consider that $\Omega$ is general a bounded domain. In this case, we can find two balls such that $B_{R_1}(p)\subset\Omega\subset B_{R_2}(p)$. According to the monotonicity of Dirichlet heat kernels on domains, we have
$$H^{(R_1)}(p,p,t)\ls H^{\Omega}(p,p,t)\ls H^{(R_2)}(p,p,t).$$
The desired  (\ref{equa4.8}) comes from the combination of  this and (\ref{eq4.10}). Now, the proof is finished.
\end{proof}

In order to state the Weyl's law, we   introduce the following condition.
\begin{defn}\label{defn4.5}
\emph{Let  $k_0>0$ and $r_0\in(0,1)$, and let $\Omega\subset (X,d)$ be domain. A Radon measure $\nu$ is said to be   \emph{$k_0-$noncollapsing on $\Omega$ of scale $r_0$}, if the functions $x\mapsto\frac{r^{k_0}}{\nu (B_r(x))}$ are integrable uniformly in $r\in(0,r_0)$. More precisely,
for any $\varepsilon>0$, there exists a constant  $\delta=\delta(\varepsilon,r_0,k_0,\Omega)>0$  such that, for any $\nu$-measurable set $E\subset \Omega$ with $\nu(E)<\delta$, it holds}
\begin{equation}\label{eq4.11}
\int_E \frac{r^{k_0}}{\nu\big(B_r(x)\big)}{\rm d}\nu(x)<\varepsilon,\quad \forall\ r\in(0,r_0).
\end{equation}
\end{defn}
It is clear that if  a measure $\nu$ is $k_0-$noncollapsing on $\Omega$ of scale  $r_0$, then for any $k'\gs k_0 $ and $r'\ls r_0$, it  is still $k'-$noncollapsing of scale  $r'$. Let us consider some examples.

\medskip
\noindent \textbf{{Example  1.}}\ Let $(X,d,\mu)$ be an $RCD^*(K,N)$-space for some $K\in \mathbb R$ and $N\gs1$. Then, for any bounded domain $\Omega\subset X$, the measure $\mu$  must be $N-$noncollapsing on  $\Omega$ of scale $d_\Omega:={\rm diam}(\Omega)$.
Indeed, by (\ref{equa2.4}), we have
$$\frac{\mu(B_{r}(x))}{\mu( B_{d_\Omega}(x))} \gs C_{d_\Omega} r^{N},\quad \forall\ r\in(0,d_\Omega).$$
That is, $\frac{r^N}{\mu(B_{r}(x))} \ls C^{-1}_{d_\Omega} /\mu( B_{d_\Omega}(x)).$ Hence, the measure $\mu$ is  $N-$noncol\-lapsing on  $\Omega$ of scale $d_\Omega$.

\medskip
\noindent\textbf{{Example  2.}}\ Let $(X,d,\mu)$ be a metric measure space. Suppose that  $\mu$ is $k_0-$noncollapsing on $\Omega$ of scale $r_0$. Assume that $\nu$ is another Radon measure  such that $$\nu(B_r(p))\gs C\cdot \mu(B_r(p)),\quad \forall \ r\in (0,r_0),\forall\ p\in \Omega$$
for some constant $C>0$.  Then  $\nu$ is also   $k_0-$noncollapsing on $\Omega$ of scale $r_0$.

\begin{thm}\label{thm4.6}
 Let $(X,d,\mu)$ be an $RCD^*(K,N)$-metric measure space and let $\Omega\subset X$ be a bounded domain. We set
\begin{equation}\label{equa4.12}
k_{{\rm max}}:=\max\big\{k|\ \mu\big(\Omega\cap\mathcal R_k\big)>0\big\}.
\end{equation}
  Assume that $\mu$ is $k_0$-noncollapsing on $\Omega$ of scale $r_0$  for some $k_0\in (0,k_{\rm max}]$ and some $r_0>0$.  Then we have the asymptotic formula of Dirichlet eigenvalues
 \begin{equation}\label{equa4.13}
 \lim_{\lambda\to\infty}\frac{N_\Omega(\lambda)}{\lambda^{k_{\rm max}/2}} =\Gamma(k_{\rm max}/2+1)^{-1}\cdot\frac{\mathscr H^{k_{\rm max}}(\Omega\cap \mathcal R_{k_{\rm max}}) }{(4\pi)^{k_{\rm max}/2}},
 \end{equation}
  where  $N_\Omega(\lambda):=\#\{\lambda_j^\Omega:\ \lambda_j^\Omega\ls \lambda\}$, and $\Gamma(s)$ is the Gamma function.
\end{thm}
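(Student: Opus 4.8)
The plan is to reduce (\ref{equa4.13}) to a heat-trace asymptotic and then invoke a Tauberian theorem. Set
$$Z_\Omega(t):=\sum_{j\gs1}e^{-\lambda_j^\Omega t}.$$
Since each $\phi_m^\Omega$ is $L^2$-normalized, Tonelli's theorem applied to the spectral expansion (\ref{equa2.8}) of $H^\Omega(t,x,x)$ (all summands being nonnegative on the diagonal) gives the trace identity $Z_\Omega(t)=\int_\Omega H^\Omega(t,x,x)\du(x)$, and the estimate obtained in the proof of Lemma \ref{lem3.4} shows $Z_\Omega(t)<\infty$ for each $t>0$. I would prove
$$\lim_{t\to0^+}t^{k_{\rm max}/2}\,Z_\Omega(t)=\frac{\mathscr H^{k_{\rm max}}\big(\Omega\cap\mathcal R_{k_{\rm max}}\big)}{(4\pi)^{k_{\rm max}/2}}=:L,$$
and then apply Karamata's Tauberian theorem to the measure $\sum_j\delta_{\lambda_j^\Omega}$, whose Laplace transform is $Z_\Omega$: from $Z_\Omega(t)\sim L\,t^{-k_{\rm max}/2}$ one gets $N_\Omega(\lambda)=\#\{j:\lambda_j^\Omega\ls\lambda\}\sim \frac{L}{\Gamma(k_{\rm max}/2+1)}\lambda^{k_{\rm max}/2}$, which is exactly (\ref{equa4.13}).

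First I would identify the $\mu$-a.e.\ pointwise limit of the integrand $t^{k_{\rm max}/2}H^\Omega(p,p,t)$. By Lemma \ref{lem4.4}, for every integer $k\ls k_{\rm max}$ and $\mu$-a.e.\ $p\in\mathcal R_k\cap\Omega$ one has $t^{k/2}H^\Omega(p,p,t)\to\big(\theta_k(p)(4\pi)^{k/2}\big)^{-1}$; multiplying by $t^{(k_{\rm max}-k)/2}$ shows
$$t^{k_{\rm max}/2}H^\Omega(p,p,t)\longrightarrow
\begin{cases}
\dfrac{1}{\theta_{k_{\rm max}}(p)(4\pi)^{k_{\rm max}/2}}, & p\in\mathcal R_{k_{\rm max}},\\
0, & p\in\mathcal R_k\cap\Omega,\ k<k_{\rm max}.
\end{cases}$$
By the definition (\ref{equa4.12}) of $k_{\rm max}$ we have $\mu(\Omega\cap\mathcal R_k)=0$ for every $k>k_{\rm max}$, and by Theorem \ref{thm4.2}(i), $\mu\big(\Omega\setminus\bigcup_k\mathcal R_k\big)=0$; hence the displayed limit holds for $\mu$-a.e.\ $p\in\Omega$. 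Integrating this limit against $\mu$ and using (\ref{equa4.4}) --- which on $\mathcal R_{k_{\rm max}}$ reads $\du/\theta_{k_{\rm max}}={\rm d}\mathscr H^{k_{\rm max}}$ --- produces exactly $L$.

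The crux is to interchange $\lim_{t\to0}$ and $\int_\Omega\du$, and this is where the noncollapsing hypothesis is used. By the monotonicity of Dirichlet heat kernels with respect to domains and the Gaussian bound (\ref{equa2.5}), for $t\in(0,T]$ we have
$$t^{k_{\rm max}/2}H^\Omega(p,p,t)\ls t^{k_{\rm max}/2}H(p,p,t)\ls C_{N,K,T}\cdot\frac{(\sqrt t\,)^{k_{\rm max}}}{\mu\big(B_{\sqrt t}(p)\big)}.$$
Since $k_0\ls k_{\rm max}$, the remark after Definition \ref{defn4.5} shows $\mu$ is $k_{\rm max}$-noncollapsing on $\Omega$ of scale $r_0$, i.e.\ the family $\big\{p\mapsto r^{k_{\rm max}}/\mu(B_r(p))\big\}_{0<r<r_0}$ is uniformly $\mu$-integrable on $\Omega$; as $\mu(\Omega)<\infty$ (because $\Omega$ is bounded and $\mu$ is Radon), the family $\big\{p\mapsto t^{k_{\rm max}/2}H^\Omega(p,p,t)\big\}_{0<t<r_0^2}$ is then uniformly integrable as well, being dominated by a uniformly integrable family. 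Vitali's convergence theorem now gives $\int_\Omega t^{k_{\rm max}/2}H^\Omega(p,p,t)\du(p)\to L$, i.e.\ the heat-trace asymptotic, and the Tauberian step concludes. I expect this uniform-integrability step to be the main obstacle: it is exactly what fails without a noncollapsing assumption, since $\mu(B_r(p))$ may then decay strictly faster than $r^{k_{\rm max}}$ on a set of positive measure, so that the limit and the integral disagree; the Tauberian step itself is classical once $Z_\Omega(t)\sim L\,t^{-k_{\rm max}/2}$ is in hand, and note that $k_{\rm max}\in\{1,\dots,[N]\}$ is automatically an integer by its very definition through the regular sets $\mathcal R_k$.
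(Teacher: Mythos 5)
Your proposal is correct and follows essentially the same route as the paper: identify the $\mu$-a.e.\ pointwise limit of $t^{k_{\max}/2}H^\Omega(\cdot,\cdot,t)$ via Lemma \ref{lem4.4}, use the Gaussian bound plus the noncollapsing hypothesis to justify interchanging limit and integral, and then apply Karamata's Tauberian theorem. The only cosmetic difference is that you package the interchange step as Vitali's convergence theorem (domination by a uniformly integrable family on a finite measure space), whereas the paper spells out the same argument by hand via Egorov's theorem together with Fatou's lemma for the reverse inequality.
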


\begin{proof}
By the upper bounds of the heat kernel  (\ref{equa2.5}), we have that, for any $t\ls1$,
\begin{equation}\label{equa4.14}
H^\Omega(x,x,t)\ls H(x,x,t)\ls \frac{C_{N,K}}{\mu(B_{\sqrt t}(x))}\cdot\exp\big(C_{N,K}\cdot t\big)\ls\frac{C'_{N,K}}{\mu(B_{\sqrt t}(x))}.
\end{equation}
In the following, we denote by $k_m:= k_{\rm max}$. Since $k_m\gs k_0$, we know that $\mu$ is also $k_m-$noncollapsing on $\Omega$ of scale $r_0$. Fix any $\varepsilon>0$ and let $\delta>0$ be given in the definition of $k_m-$noncollapsing on $\Omega$ of scale $r_0$. From Lemma \ref{lem4.4} and  Egorov's Theorem, there exists a $\mu$-measurable  set $E$ with $\mu(E)<\delta$ such that
 $$H^\Omega(x,x,t)\cdot t^{k_m/2}\rightarrow\frac{1}{\theta_{k_m}(x)\cdot(4\pi)^{k_m/2}} \ \ {\rm  uniformly\ on}\ \  (\Omega\cap \mathcal R_{k_m})\backslash E.$$
Hence, by using (\ref{equa4.14}) and the fact  that $\mu$ is  $k_m-$noncollapsing on $\Omega$ of scale $r_0$, we get
\begin{align}\label{equa4.15}
&\quad\ \lim_{t\to0}\int_{\Omega\cap \mathcal R_{k_m}} t^{k_m/2}\cdot H^\Omega(x,x,t)\du(x)\\
\notag &\ls C'_{N,K}\cdot\varepsilon+ \lim_{t\to0}\int_{(\Omega\cap \mathcal R_{k_m})\backslash E} t^{k_m/2}\cdot H^\Omega(x,x,t)\du(x)\\
\notag &= C'_{N,K}\cdot\varepsilon+\int_{(\Omega\cap \mathcal R_{k_m})\backslash E} \lim_{t\to0} t^{k_m/2}\cdot H^\Omega(x,x,t)\du(x) \\
\notag  &= C'_{N,K}\cdot\varepsilon+\int_{ \Omega\cap \mathcal R_{k_m} }\frac{1}{\theta_{k_m}(x)\cdot(4\pi)^{k_m/2}}\du(x)\\
\notag   &= C'_{N,K}\cdot\varepsilon+\int_{ \Omega\cap \mathcal R_{k_m} }\frac{\theta_{k_m}(x)}{\theta_{k_m}(x)\cdot(4\pi)^{k_m/2}}{\rm d}\mathscr H^{k_m}(x)\\
\notag    &= C'_{N,K}\cdot\varepsilon+\frac{\mathscr H^{k_m}( \Omega\cap \mathcal R_{k_m}) }{(4\pi)^{k_m/2}},
\end{align}
  where we have used $\mu|_{\mathcal R_{k_m}}=\theta_{k_m}\cdot\mathscr H^{k_m}.$ On the other hand, Fatou's lemma implies
\begin{equation*}
\begin{split}
 \lim_{t\to0}\int_{\Omega\cap \mathcal R_{k_m}} t^{k_m/2}\cdot H^\Omega(x,x,t)\du(x)&\gs  \int_{\Omega\cap \mathcal R_{k_m}} \lim_{t\to0} t^{k_m/2}\cdot H^\Omega(x,x,t)\du(x) \\
 &=\frac{\mathscr H^{k_m}( \Omega\cap \mathcal R_{k_m}) }{(4\pi)^{k_m/2}}.
\end{split}
\end{equation*}
Therefore, letting $\varepsilon\to0$, we conclude that
\begin{equation}\label{equa4.16}
 \lim_{t\to0}\int_{\Omega\cap \mathcal R_{k_m}} t^{k_m/2}\cdot H^\Omega(x,x,t)\du(x)=\frac{\mathscr H^{k_m}( \Omega\cap \mathcal R_{k_m}) }{(4\pi)^{k_m/2}}.
\end{equation}
For each $k<k_m$, From Lemma {\ref{lem4.4}}, we have, for $\mu$-almost all $x\in \mathcal R_k$, that
$$\lim_{t\to0}H^\Omega(x,x,t)\cdot t^{k_m/2}=\lim_{t\to0}H^\Omega(x,x,t)\cdot t^{ \frac{k}{2}}\cdot t^{\frac{k_m-k}{2}}=0,\quad \forall\ k<k_m.$$
By the same argument as deducing (\ref{equa4.15}), we obtain
\begin{equation}\label{eq4.15}
 \lim_{t\to0}\int_{\Omega\cap \mathcal R_{k}} t^{k_m/2}\cdot H^\Omega(x,x,t)\du(x)=0.
\end{equation}
The combination of (\ref{equa4.16}), (\ref{eq4.15}) and   Theorem \ref{thm4.2} (i)  implies  that
\begin{equation*}
\begin{split}
 \lim_{t\to0}t^{k_m/2}\sum_{j=1}^\infty e^{-\lambda_j^\Omega t}
 &= \lim_{t\to0}\int_\Omega t^{k_m/2}\cdot H^\Omega(x,x,t)\du  \\
&= \lim_{t\to0}\int_{\Omega \cap (\cup_{j=1}^{k_m}\mathcal R_j)}\!t^{k_m/2}\cdot H^\Omega(x,x,t)\du \\
 &= \lim_{t\to0}\sum_{j=1}^{k_m}\int_{\mathcal R_j\cap\Omega} t^{k_m/2} H^\Omega(x,x,t)\du =\frac{\mathscr H^{k_m}(\Omega\cap \mathcal R_{k_m}) }{(4\pi)^{k_m/2}},
\end{split}
\end{equation*}
 where, in the second equality, we have used the fact that $\mu(\Omega\cap\mathcal R_{j'})=0$ for any $j'>k_m.$

 Finally, by applying the Karamata Tauberian theorem, we have
 \begin{equation*}
 \lim_{\lambda\to\infty}\frac{N_\Omega(\lambda)}{\lambda^{k_m/2}} =\Gamma(k_m/2+1)^{-1}\cdot\frac{\mathscr H^{k_m}(\Omega\cap \mathcal R_{k_m}) }{(4\pi)^{k_m/2}}.
\end{equation*}
The proof is finished.
\end{proof}
\begin{rem}\label{rem4.7}
 We can consider  the constant $k_{\rm max}$ in (\ref{equa4.12})  as the maximal essential dimension of $\Omega$. It is clear that $k_{\rm max}\ls [N]$ for any domain $\Omega$ of an $RCD^*(K,N)$-space, by Theorem \ref{thm4.2}(i).
\end{rem}
\begin{proof}[Proof of Theorem \ref{thm1.1}.]
Let $\Omega\subset X$ be a bounded domain. We first show that
\begin{equation}\label{equa4.18}
\mu(\Omega\cap\mathcal R_k )=0,\quad{\rm for\ any\ integer}\ \  k<N.
\end{equation}
From Theorem \ref{thm4.2}(ii), we know that, for any integer $1\ls k\ls [N]$, $\mathcal R_k$ is $k$-rectifiable. Thus the Hausdorff dimension  $\dim_{\mathscr H}(\mathcal R_k)\ls k$. In particular, we have
 $$\mathscr H^N(\Omega\cap \mathcal R_k)=0,\quad{\rm for\ any\ integer}\ \  k<N.$$
  The assumption $\mu\ll \mathscr H^N$ implies $\mu(\Omega\cap\mathcal R_k )=0$. This is (\ref{equa4.18}).

  Secondly, we want to prove
 \begin{equation}\label{equa4.19}
N\in\mathbb N\quad {\rm and}\quad \mu\big(\Omega\backslash \mathcal R_N\big)=0.
\end{equation}
We will argue by a contradiction to show that $N$ is an integer. Suppose not, then we have $[N]<N$. From (\ref{equa4.18}), we get
 $\mu\big(\Omega\cap(\cup_{1\ls k\ls [N]}\mathcal R_k )\big)=0$. By  combining with Theorem \ref{thm4.2}(i), we conclude $\mu(\Omega)=0$. This contradicts to the fact that $\mu(O)>0$ for any open subset $O\subset X$ (since ${\rm supp}(\mu)=X$). Thus, $N$ is an integer. Now let us prove the assertion $\mu\big(\Omega\backslash \mathcal R_N\big)=0$.  By using (\ref{equa4.18}) again, we get
 $\mu\big(\Omega\cap(\cup_{1\ls k\ls N-1}\mathcal R_k )\big)=0$. It follows   $\mu\big(\Omega\backslash \mathcal R_N\big)=0,$ by Theorem \ref{thm4.2}(i).

At last, we will complete the proof of (\ref{equa1.1}). From (\ref{equa4.19}), we have
 $\mu\big(\Omega\cap \mathcal R_N\big)=\mu(\Omega)>0$, the definition of  $k_{\rm max}$ yields $k_{\rm max}\gs N$ (see (\ref{equa4.12})). Thus, we have $k_{\rm max}= N$ (recalling that $k_{\rm max}\ls N$, see Remark \ref{rem4.7}). Now we conclude by Theorem \ref{thm4.6} (and by \emph{Example 1}  above) that
\begin{align}\label{eq4.20}
	\lim_{\lambda\to\infty}\frac{N_\Omega(\lambda)}{\lambda^{N/2}}& =\Gamma(N/2+1)^{-1}\cdot\frac{\mathscr H^{N}(\Omega\cap \mathcal R_{N}) }{(4\pi)^{N/2}}\\
	\notag &= \frac{\omega_N\cdot \mathscr H^N(\Omega\cap \mathcal R_{N})}{(2\pi)^{N}},
 \end{align}
where we have used that $\Gamma(N/2)=\frac{2\pi^{N/2}}{N\cdot\omega_N}$.

We remain only to show that $\mathscr H^N(\Omega\cap \mathcal R_{N})=\mathscr H^N(\Omega)$. Equivalently,
$\mathscr H^N(\Omega \backslash\mathcal R_{N})=0.$
It follows immediately from the assumption $\mathscr H^N\ll \mu$ and (\ref{equa4.19}).
Now the proof is finished.
\end{proof}

 \appendix

\section{Dominated convergence theorem and Fatou's lemma on $pmGH$-converging spaces}

Dominated convergence theorem and Fatou's lemma are among the most important assertions in all of analysis. In this appendix, we will give an introduction of them for functions defined on a sequence of $pmGH$-converging metric measure spaces. They are well-known for experts.

Let $(X_j,d_j,\mu_j,p_j)$,   $j\in\mathbb N\cup\{\infty\},$ be a sequence of pointed metric measure spaces. In this appendix, we always assume that
\begin{align}\label{equ-a1}
&{\rm all\ of}\ \ (X_j,d_j)\ \ {\rm are\ length \ spaces}\\
\notag &{\rm and}\quad   (X_j,d_j,\mu_j,p_j)\overset{pmGH}{\longrightarrow} (X_\infty,d_\infty,\mu_\infty,p_\infty).
\end{align}
 Please see Proposition \ref{prop2.9} for the definitions of pointed measured Gromov-Hausdorff convergence ($pmGH$-convergence).

\begin{defn}\label{def-a1}
 Let $R>0$. Let $\{f_j\}$ be a sequence of  Borel functions defined on $B_R(p_j)$ for each $j\in\mathbb N\cup\{\infty\}$.
We say that
 \begin{equation}\label{equ-a2}
\liminf_{j\to\infty}f_j\gs f_\infty\ \ {\rm at}\ \ x_\infty \in B_R(p_\infty)
\end{equation}
 if $\liminf_{j\to\infty}f_j(x_j)\gs f_\infty(x_\infty)$ holds  for any sequence $\{x_j\}_{j\in\mathbb N}$ converging to $x_\infty$.
  More precisely, by letting
$(\Phi_j)$ and $(\epsilon_j)$ be given in Proposition \ref{prop2.9}, (\ref{equ-a2}) means the following:  for any $\varepsilon>0$, there exist $N(\varepsilon,x_\infty)\in\mathbb N$ and $\delta(\varepsilon, x_\infty)>0$ such that
\begin{equation}\label{equ-a3}
\inf_{z\in B_R(p_j),\ d_\infty\big(\Phi_j(z),x_\infty\big)\ls \delta(\varepsilon,x_\infty)} f_j(z)\gs f_\infty(x_\infty)-\varepsilon.
\end{equation}
\end{defn}

We say that $\limsup_{j\to\infty}f_j\ls f_\infty$ at $x_\infty \in B_R(p_\infty)$ if and only if $$\liminf_{j\to\infty}(-f_j)\gs -f_\infty$$ at $x_\infty$. It is clear that  $f_j\rightarrow f_\infty$ over $B_R(p_j)$ at  $x_\infty\in B_R(p_\infty)$ in the sense of Definition \ref{definition2.10} $(i)$ if and only if $\liminf_{j\to\infty}f_j\gs f_\infty$  and  $\limsup_{j\to\infty}f_j\ls f_\infty$ at $x_\infty \in B_R(p_\infty)$.

\begin{prop}[Fatou's Lemma  on $pmGH$-converging spaces]\label{prop-a2}
Let $R>0$.
Let  $\{f_j\}_{j\in\mathbb N\cup\{\infty\}}$ be a sequence of  nonnegative Borel real function  defined on $B_R(p_j)$. Suppose that $f_\infty\in L^1(B_R(p_\infty))$ and lower semi-continuous $\mu_\infty$-a.e. on $ B_R(p_\infty)$, and that
\begin{equation}\label{equ-a4}
\liminf_{j\to\infty}f_j\gs f_\infty,\quad  \ \mu_\infty{\rm-a.e.\ in}\   B_R(p_\infty),
\end{equation}
then
\begin{equation}\label{equ-a5}
 \liminf_{j\to\infty} \int_{B_R(p_j)}f_j{\rm d}\mu_j\gs \int_{B_R(p_\infty)}f_\infty{\rm d}\mu_\infty.
 \end{equation}
\end{prop}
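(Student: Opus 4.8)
The plan is to reduce Proposition \ref{prop-a2} to the case of a \emph{continuous} limit function and then to invoke the weak convergence of the restricted push-forward measures $(\Phi_j)_\sharp\big(\mu_j|_{B_R(p_j)}\big)\to\mu_\infty|_{B_R(p_\infty)}$. The crucial first step upgrades the \emph{almost everywhere} hypothesis (\ref{equ-a4}) to an \emph{everywhere} statement. Since $X_\infty$ is proper, $\overline{B_R(p_\infty)}$ is compact; define on it the lower $\Gamma$-limit
\[
\varphi(x):=\inf\Big\{\liminf_{j\to\infty}f_j(z_j)\ :\ z_j\in B_R(p_j),\ z_j\to x\Big\},
\]
the infimum running over all sequences converging to $x$ in the sense introduced after Proposition \ref{prop2.9}. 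It is a standard fact that such a $\Gamma$-lower limit $\varphi$ is lower semicontinuous, and $\varphi\gs0$ because $f_j\gs0$. By (\ref{equ-a4}) and Definition \ref{def-a1} one has $\varphi\gs f_\infty$ $\mu_\infty$-a.e.\ on $B_R(p_\infty)$, whereas by construction $\liminf_{j\to\infty}f_j\gs\varphi$ holds at \emph{every} point of $B_R(p_\infty)$. Truncating $\varphi$ by $M\in\mathbb N$ (and letting $M\to\infty$ only at the end, which costs nothing by monotone convergence since $\varphi\gs f_\infty$ a.e.), we may assume $0\ls\varphi\ls M$; then $\varphi$ is the pointwise increasing limit of the bounded Lipschitz functions $\varphi_m(x):=\inf_{y\in\overline{B_R(p_\infty)}}\big(\varphi(y)+m\,d_\infty(x,y)\big)$, and after multiplying by a cut-off we may take each $\varphi_m$ to have compact support in $X_\infty$.

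By monotone convergence it then suffices to prove, for each fixed $m$,
\[
\liminf_{j\to\infty}\int_{B_R(p_j)}f_j\,{\rm d}\mu_j\ \gs\ \int_{B_R(p_\infty)}\varphi_m\,{\rm d}\mu_\infty .
\]
Set $h_j:=\min\big(f_j,\ \varphi_m\circ\Phi_j\big)$ on $B_R(p_j)$, so that $0\ls h_j\ls f_j$ and $h_j=\varphi_m\circ\Phi_j-\big(\varphi_m\circ\Phi_j-f_j\big)^+$. The heart of the matter is the uniform vanishing
\[
\lim_{j\to\infty}\ \sup_{z\in B_R(p_j)}\big(\varphi_m(\Phi_j(z))-f_j(z)\big)^+=0 .
\]
Were it false, there would be $c>0$ and $z_j\in B_R(p_j)$ with $\varphi_m(\Phi_j(z_j))-f_j(z_j)\gs c$ along a subsequence; since all the points $\Phi_j(z_j)$ lie in the fixed compact set $\overline{B_{R+1}(p_\infty)}$, a further subsequence satisfies $z_j\to x_\infty$ for some $x_\infty\in\overline{B_R(p_\infty)}$, and then continuity of $\varphi_m$ would give $\liminf_{j}f_j(z_j)\ls\varphi_m(x_\infty)-c<\varphi_m(x_\infty)$, contradicting $\liminf_{j}f_j\gs\varphi_m$ at $x_\infty$. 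As $\mu_j(B_R(p_j))$ is bounded uniformly in $j$, this uniform vanishing forces $\int_{B_R(p_j)}\big(\varphi_m\circ\Phi_j-f_j\big)^+\,{\rm d}\mu_j\to0$.

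On the other hand $\int_{B_R(p_j)}\varphi_m\circ\Phi_j\,{\rm d}\mu_j=\int_{X_\infty}\varphi_m\,{\rm d}\big((\Phi_j)_\sharp(\mu_j|_{B_R(p_j)})\big)\to\int_{B_R(p_\infty)}\varphi_m\,{\rm d}\mu_\infty$ by the weak convergence of the restricted push-forwards, which holds for a.e.\ $R>0$ by Definition \ref{definition2.8}(4); an arbitrary $R$ is recovered by choosing such radii $R'\uparrow R$ and using $f_j\gs0$ together with monotone convergence. Combining the last two displays gives $\int_{B_R(p_j)}h_j\,{\rm d}\mu_j\to\int_{B_R(p_\infty)}\varphi_m\,{\rm d}\mu_\infty$, whence $\liminf_{j}\int_{B_R(p_j)}f_j\,{\rm d}\mu_j\gs\liminf_{j}\int_{B_R(p_j)}h_j\,{\rm d}\mu_j=\int_{B_R(p_\infty)}\varphi_m\,{\rm d}\mu_\infty$. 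Letting $m\to\infty$, and then $M\to\infty$, yields (\ref{equ-a5}).

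I expect the main obstacle to be precisely the first step. The hypothesis is an almost-everywhere assertion about limits of functions living on \emph{different} spaces, and since the $f_j$ carry no equicontinuity one cannot integrate it directly; passing to the lower semicontinuous $\Gamma$-limit $\varphi$ replaces it by an everywhere-valid inequality, and it is exactly that everywhere-version that makes the compactness argument for the uniform vanishing go through, after which the weak convergence of the (restricted) push-forward measures does the rest. A secondary, purely technical, point is the behaviour of the measures near the sphere $\partial B_R(p_\infty)$, which is why one passes to a full-measure set of radii and concludes by approximation.
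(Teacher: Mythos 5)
Your approach is genuinely different from the paper's — the paper works directly with the Egorov-type sets $A_k(\varepsilon)$ and a layer-cake representation of the integral, while you replace $f_\infty$ by a $\Gamma$-lower limit $\varphi$ and its Moreau--Yosida approximants $\varphi_m$ — but there is a genuine gap at the very first step, and it propagates to the compactness argument.

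You assert that $\varphi(x):=\inf\{\liminf_j f_j(z_j): z_j\to x\}$ is lower semicontinuous because a $\Gamma$-lower limit always is. That standard fact holds when the underlying convergence is induced by a (first-countable) topology on a fixed space, and its proof is a diagonal extraction. Here the convergence ``$z_j\to x$'' means $d_\infty(\Phi_j(z_j),x)<\epsilon_j$, and this is \emph{not} a topological convergence: if $x_n\to x$ in $X_\infty$ and $z^{(n)}_j\to x_n$ for each $n$, a diagonal $w_j:=z^{(n_j)}_j$ only satisfies $d_\infty(\Phi_j(w_j),x)\ls\epsilon_j+d_\infty(x_{n_j},x)$, which is not $<\epsilon_j$ no matter how slowly $n_j$ grows. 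In fact $\varphi$ need not be lsc. Take $X_j=X_\infty=\mathbb R$, $\Phi_j=\mathrm{id}$, $\epsilon_j=1/j$, and $f_j:=\mathbf 1_{[-2/j,2/j]}$. Any sequence $z_j$ with $|z_j|<1/j$ lies in $[-2/j,2/j]$, so $\varphi(0)=1$; for $x\neq 0$ one has $\varphi(x)=0$; hence $\liminf_{y\to0}\varphi(y)=0<1=\varphi(0)$. Consequently the Moreau--Yosida approximants $\varphi_m$ converge to the lsc envelope $\varphi^{**}$, not to $\varphi$, and there is no reason for $\varphi^{**}\gs f_\infty$ to persist $\mu_\infty$-a.e.

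The same mismatch breaks the ``uniform vanishing'' step. After extracting a subsequence with $\Phi_j(z_j)\to x_\infty$ in $X_\infty$, you invoke ``$\liminf_j f_j\gs\varphi_m$ at $x_\infty$'' to derive a contradiction. But what you actually established (from $\varphi\gs\varphi_m$ and the definition of $\varphi$) is that inequality for sequences satisfying $d_\infty(\Phi_j(z_j),x_\infty)\ls\epsilon_j$. Your extracted sequence only satisfies $d_\infty(\Phi_j(z_j),x_\infty)\to0$, and there is no reason for this to be eventually $\ls\epsilon_j$; so the contradiction does not follow.

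The paper avoids exactly this difficulty. In the sets $A_k(\varepsilon)$ the defining inequality holds simultaneously for all $\ell\gs k$, and the key inclusion (\ref{equ-a7}) is obtained by evaluating the hypothesis at the \emph{moving} point $\Phi_\ell(x)$ itself, which trivially satisfies $d_\infty(\Phi_\ell(x),\Phi_\ell(x))=0\ls\epsilon_\ell$. No diagonal extraction and no topological closure of the convergence are ever needed. If you want to keep the $\Gamma$-limit/Moreau--Yosida route, you would first need to upgrade the hypothesis (\ref{equ-a4}) from the $\epsilon_j$-ball notion to the topological one (all sequences with $d_\infty(\Phi_j(z_j),x)\to0$); that upgrade is not automatic, and producing it is essentially what the paper's $A_k(\varepsilon)$-argument accomplishes.
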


We need the following   a variant of the classical Fatou's lemma:
\begin{lem}\label{lem-a3}
Let $(X,d)$ be a metric  space, and let $g$ be a nonnegative Borel real function  on $X$. Suppose that  $\{\nu_j\}_{j\in\mathbb N\cup\{\infty\}} $ be a sequence of Radon measures   on $X$ such that   $\nu_j\rightharpoonup\nu_\infty$, as $j\to\infty$. Assume that $g$ is lower semi-continuous $\nu_\infty$-a.e. on $X$. Then we have
$$\liminf_{j\to\infty}\int_X g{\rm d}\nu_j\gs \int_X g{\rm d\nu_\infty}.$$
\end{lem}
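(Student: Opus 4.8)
The plan is to deduce the inequality from the only thing weak convergence supplies directly --- convergence of integrals against compactly supported continuous test functions, exactly the notion recorded in Proposition \ref{prop2.9}(4') --- combined with the inner regularity of the Radon limit measure $\nu_\infty$. The key observation is that on a locally compact, $\sigma$-compact metric space (and in every application here the measures are supported in a fixed bounded region of an $RCD^*(K,N)$ space, which is proper, so this costs nothing) a nonnegative lower semicontinuous $g$ satisfies $\int_X g\,{\rm d}\nu_\infty=\sup\big\{\int_X\varphi\,{\rm d}\nu_\infty:\ \varphi\in C_c(X),\ 0\le\varphi\le g\big\}$. Hence it suffices to prove, for each such $\varphi$, that $\liminf_{j\to\infty}\int_X g\,{\rm d}\nu_j\ge\int_X\varphi\,{\rm d}\nu_\infty$, and then to take the supremum over $\varphi$.

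First I would record the elementary bound $\int_X g\,{\rm d}\nu_j\ge\int_X\varphi\,{\rm d}\nu_j$ for every $j$, valid since $0\le\varphi\le g$; passing to the $\liminf$ and using $\nu_j\rightharpoonup\nu_\infty$ against $\varphi\in C_c(X)$ gives $\liminf_j\int_X g\,{\rm d}\nu_j\ge\int_X\varphi\,{\rm d}\nu_\infty$. To generate enough admissible $\varphi$ --- and to guarantee that the supremum actually recovers $\int_X g\,{\rm d}\nu_\infty$ --- I would write $g$ as the pointwise nondecreasing limit of bounded Lipschitz functions, for instance the Moreau--Yosida regularizations $g_n(x):=\min\big\{n,\ \inf_{y\in X}\big(g(y)+n\,d(x,y)\big)\big\}$, each $n$-Lipschitz with $0\le g_n\le n$ and $g_n\uparrow g$ pointwise precisely because $g$ is lower semicontinuous; multiplying $g_n$ by a continuous cutoff equal to $1$ on an exhausting sequence of compact sets produces admissible test functions whose $\nu_\infty$-integrals increase to $\int_X g\,{\rm d}\nu_\infty$ by monotone convergence. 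Combining this with the previous bound closes the argument.

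The envelope construction and the single $C_c$-limit are routine; the step that genuinely uses the structure is the regularity identity, and it is here one sees that the statement should be read with $g$ lower semicontinuous (equivalently, an increasing limit of continuous functions) --- for a merely Borel $g$ the inequality can fail, as $\nu_j=\delta_{1/j}\rightharpoonup\delta_0=\nu_\infty$ with $g=\mathbf 1_{\{0\}}$ shows. A second, cosmetic point is that mass of the $\nu_j$ might escape to infinity on a noncompact $X$; since we only want a \emph{lower} bound this is harmless, and keeping every test function compactly supported throughout is legitimate. The only real work left is the bookkeeping of the two passages to the limit (letting $n\to\infty$ and enlarging the cutoff) so that the supremum over $\varphi$ is extracted correctly.
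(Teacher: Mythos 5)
Your argument is correct, but it takes a genuinely different route from the paper's, and you have also spotted a real flaw in the statement. The paper's proof is the layer-cake argument: it writes $\int_X g\,{\rm d}\nu_j=\int_0^\infty G_j(s)\,{\rm d}s$ with $G_j(s):=\nu_j\big(\{g>s\}\big)$, invokes weak convergence to compare $G_j$ with $G_\infty$, and finishes with the classical Fatou lemma in the $s$-variable on $[0,\infty)$. What that argument actually requires is the Portmanteau inequality $\liminf_j\nu_j(U)\ge\nu_\infty(U)$ for open $U$, applied to $U=\{g>s\}$ --- so it too tacitly needs $g$ to be lower semicontinuous (and the paper's assertion that $\lim_j G_j(s)=G_\infty(s)$ is an overstatement; only the one-sided inequality holds in general, but that is all Fatou needs). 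Your approach instead approximates $g$ from below by $C_c$ test functions via Moreau--Yosida envelopes and cutoffs and then uses weak convergence directly on each test function before passing to the supremum; this is clean, but it costs you local compactness and inner regularity of $\nu_\infty$ (harmless in the applications, where everything sits in a fixed ball of a proper $RCD^*$ space), whereas the layer-cake route works on an arbitrary metric space with no extra machinery. The counterexample you give, $\nu_j=\delta_{1/j}\rightharpoonup\delta_0$ with $g=\mathbf 1_{\{0\}}$, is valid and shows the lemma as literally stated --- with $g$ merely Borel --- is false; the hypothesis should indeed read ``lower semicontinuous,'' a correction that applies equally to the paper's own proof.
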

\begin{proof}
Since $g$ is lower semi-continuous almost everywhere, there exist a sequence of (Lipschitz) continuous functions $g_t$ such that $g_t(x)\ls g(x)$ and $g_t(x)\uparrow g(x)$ as $t\to\infty$ at $\nu_\infty$-a.e. $x\in X$ (\cite[Lemma 1.61]{afp06}).

Fix each $t>0$, we put, for any $s\gs0$, that
$$E_t(s):=g_t^{-1}\big((s,\infty)\big)\quad {\rm and}\quad G_j(s):=\nu_j(E_t(s)).$$
Then $G_j\gs0$ and $E_t(s)$ is open, and by $\nu_j\rightharpoonup\nu_\infty$ that $\lim_{j\to\infty}G_j(s)\gs G_\infty(s)$. By using the fact that
$\int_Xg_t{\rm d}\nu_j=\int_0^\infty G_j(s){\rm d}s$ ant the Fatou's lemma on $[0,\infty)$, we conclude  that
$$\liminf_{j\to\infty}\int_X g_t{\rm d}\nu_j\gs \int_X g_t{\rm d\nu_\infty}, \quad {\rm for\ all\ }\ t>0.$$
At last, the assertion comes from the fact $g_t\ls g$ and the monotone converge theorem.
\end{proof}

\begin{proof}[Proof of Proposition \ref{prop-a2}]
Let $(\Phi_j)$ and $(\epsilon_j)$ be given in Proposition \ref{prop2.9}.

For any fixed $\varepsilon>0$ and $k\in\mathbb N$, we denote
\begin{align*}
	A_{k}(\varepsilon):=\Big\{& x_\infty\in B_R(p_\infty):\   \forall\ \ell\gs k, \ {\rm it\ holds}\\
	&\inf_{z\in B_R(p_\ell),\ d_\infty\big(\Phi_\ell(z),x_\infty\big)\ls \delta(\varepsilon,x_\infty)} f_\ell(z)>f_\infty(x_\infty)-\varepsilon\Big\}.
\end{align*}
It is easily seen that $ A_k(\varepsilon)$ is increasing in  $k$ and by (\ref{equ-a4}) that
 $$\mu_\infty(B_R(p_\infty))=\mu_\infty\big(\cup_{k\gs1}A_k(\varepsilon)\big)=\lim_{k\to\infty}\mu_\infty(A_k(\varepsilon)).$$
  Then, for any fixed $\varepsilon>0$, there exists some $k_0:=k_0(\varepsilon)\in\mathbb N$ such that
\begin{equation}\label{equ-a6}
\int_{A_{k_0}(\varepsilon)}f_\infty{\rm d}\mu_\infty\gs  \int_{B_R(p_\infty)}f_\infty{\rm d}\mu_\infty -\varepsilon,
\end{equation}
where we have used $f_\infty\in L^1(B_R(p_\infty))$.
We put
$$E_j(s):=\Big\{x\in B_R(p_j):\ f_j(x)>s\Big\},\quad \forall\ j\in\mathbb N\cup\{\infty\},\quad\forall\ s\in[0,\infty).$$
Given any $\varepsilon>0$ (and fixed some $k_0$ in (\ref{equ-a6})),
   we have
\begin{equation}\label{equ-a7}
\Phi_\ell^{-1}\big(E_\infty(s)\cap A_{k_0}(\varepsilon)\big) \subset   E_\ell(s-\varepsilon),\quad \forall\ \ell\gs k_0, \quad\forall\ s\in[\varepsilon,\infty).
\end{equation}
Indeed, letting $\ell\gs k_0$, for each $x\in \Phi_\ell^{-1}\big(E_\infty(s)\cap A_{k_0}(\varepsilon)\big) $, we get $\Phi_\ell(x)\in E_\infty(s) \cap A_{k_0}(\varepsilon)$. This implies   $f_\infty(\Phi_\ell(x))>s $ and
    $f_\ell(x)>f_\infty(\Phi_\ell(x))-\varepsilon$. It follows
     $f_\ell(x)>s-\varepsilon$. I.e., $x\in E_\ell(s-\varepsilon)$.

By (\ref{equ-a7}), we have
$$\mu_\ell\big(E_\ell(s-\varepsilon)\big)\gs \mu_\ell\big(\Phi_\ell^{-1}\big(E_\infty(s)\cap A_{k_0}(\varepsilon)\big)\big)=[(\Phi_\ell)_\sharp\mu_\ell](E_\infty(s)\cap A_{k_0}(\varepsilon))$$
for any $\ell\gs k_0$ and any $s>\varepsilon$. By integrating over $s\in(\varepsilon, \infty)$, we get
$$\int_{B_R(p_\ell)}f_\ell{\rm d}\mu_\ell=\int_\varepsilon^\infty\mu_\ell\big(E_\ell(s-\varepsilon)\big){\rm d}s\gs \int_{A_{k_0}(\varepsilon)\cap\{f_\infty\gs \varepsilon\}}f_\infty{\rm d}[(\Phi_\ell)_\sharp\mu_\ell]$$
for any $\ell\gs k_0$. Thus, by  the  weak convergence  $(\Phi_\ell)_\sharp\mu_\ell \rightharpoonup\mu_\infty$ on $B_R(p_\infty)$ and Lemma \ref{lem-a3}, we conclude that, for any fixed $\varepsilon>0$,
\begin{align}\label{equ-a8}
	\liminf_{\ell\to\infty}\int_{B_R(p_\ell)}f_\ell{\rm d}\mu_\ell &\gs \int_{A_{k_0}(\varepsilon)\cap\{f_\infty\gs \varepsilon\}}f_\infty{\rm d}\mu_\infty \\
	\notag &\gs  \int_{A_{k_0}(\varepsilon)}f_\infty{\rm d}\mu_\infty-  \int_{B_R(p_\infty)\cap \{f_\infty<\varepsilon\}}f_\infty{\rm d}\mu_\infty  .
\end{align}
At last, letting $\varepsilon\to0$ and noting that $f_\infty\in L^1((B_R(p_\infty))$, the assertion (\ref{equ-a5}) comes from the combination of (\ref{equ-a6}) and (\ref{equ-a8}). The proof is finished.
\end{proof}
From this, it is not hard to deduce the dominated converge theorem for functions living on a sequence of $pmGH$-converging spaces as following.
\begin{prop}[Dominated convergence thoerem  on $pmGH$-\ converging spaces]\label{prop-a4}
Let $R>0$  and let  $\{f_j\}_{j\in\mathbb N\cup\{\infty\}}$ be a sequence of  Borel real function  defined on $B_R(p_j)$. Suppose that $f_j\to f_\infty$  $\mu_\infty$-a.e. on $B_R(p_\infty).$ If
 there exists a sequence of functions $\{F_j\}_{j\in\mathbb N\cup\{\infty\}}$ such that $F_j\to F_\infty$ in $L^1(B_R(p_j))$ in the sense of Definition \ref{def3.1} (by replacing $L^2$ there by $L^1$), and that
$$|f_j(x)| \ls F_j(x)\qquad\forall  x\in B_R(p_j),\  \forall j\in  \mathbb N,$$
and  $ |f_\infty | \ls F_\infty  $ for $\mu_\infty$-almost all  in $B_R(p_\infty)$,    then
\begin{equation}\label{equ-a10}
 \lim_{j\to\infty} \int_{B_R(p_j)}|f_j|{\rm d}\mu_j= \int_{B_R(p_\infty)}|f_\infty|{\rm d}\mu_\infty.
 \end{equation}
\end{prop}
\begin{proof} From Remark \ref{remark2.11}(2), we know that $f_\infty$ is continuous at almost everywhere. By using   Proposition \ref{prop-a2} to both $|f_j|$ and $F_j-|f_j|$, the assertion follows.
\end{proof}




\begin{thebibliography}{99}

\bibitem{afp06} L. Ambrosio, N. Fusco and D. Pallara, \emph{Functions of Bounded Variation and Free Discontinuity Problems}, Clarendon Press, Oxford, ISBN 0-19-850245-1. (2006).

\bibitem{agmr-trans} L. Ambrosio, N. Gigli, A. Mondino, and T. Rajala,
\emph{Riemannian Ricci curvature lower in metric measure spaces with $\sigma$-finite measure},
Trans. Amer. Math. Soc. {\bf 367} (2015), 4661--4701.

\bibitem{ags15} L. Ambrosio, N. Gigli, and G. Savar\'e,
\emph{Bakry--Emery curvature-dimension condition and Riemannian Ricci curvature bounds},
Ann. Probab. {\bf 43} (2015), 339--404.

\bibitem{ags13-lip} L. Ambrosio, N. Gigli, and G. Savar\'e,
\emph{Density of Lipschitz functions and equivalence of weak gradients in metric measure
spaces}, Rev. Mat. Iberoam. {\bf 29} (2013), 969--996.

\bibitem{ags-duke} L. Ambrosio, N. Gigli, and G. Savar\'e,
\emph{Metric measure spaces with Riemannian Ricci curvature bounded from below},
Duke Math. J. {\bf 163} (2014), no.~7, 1405--1490.


\bibitem{ags14} L. Ambrosio, N. Gigli, G. Savar\'e,
\emph{Calculus and heat flow in metric measure spaces and applications to spaces with Ricci bounds from below}, Invent. Math. {\bf 195} (2014), no.~2, 289--391.

\bibitem{ah16} L. Ambrosio and S. Honda,
\emph{New stability results for sequences of metric measure spaces with uniform Ricci bounds from below},
Measure Theory in Non-Smooth Spaces, pp. 1--51, De Gruyter Press, edited by Nicola Gigli,
{\tt ISBN:978-3-11-055083-2}.

\bibitem{ah17} L. Ambrosio and S. Honda,
\emph{Local spectral convergence in $RCD^*(K,N)$ spaces},
 Nonlinear Anal., {\bf 177}. (2018), 1--23.

\bibitem{aht17} L. Ambrosio, S. Honda, and D. Tewodrose,
\emph{Short-time behavior of the heat kernel and Weyl's law on $RCD^*(K,N)$ spaces},
Ann. Glob. Anal. Geom. {\bf 53} (2018), 97--119.

\bibitem{ak00} L. Ambrosio and B. Kirchheim,
\emph{Rectifiable sets in metric and Banach spaces},
Math. Ann. {\bf 318} (2000), 527--555.


\bibitem{ams16} L. Ambrosio, A. Mondino, and G. Savar\'e,
\emph{On the Bakry-\'Emery condition, the gradient estimates and the local-to-global property of $RCD^*(K,N)$ metric measure spaces},
J. Geom. Anal.  {\bf 26} (2016), no.~1, 24--56.



\enlargethispage{1em}
\bibitem{bs10} K. Bacher and K. Sturm,
\emph{Localization and tensonrization properties of the curvature-dimension for metric measure spaces},  J. Funct. Anal. {\bf 259} (2010), no.~1, 28--56.

\bibitem{bm06} A. Bj\"orn and N. Marola,
\emph{Moser iteration for (quasi)minimizers on metric spaces},
Manuscripta Math. {\bf 121} (2006), 339--366.

\bibitem{cmil16} F. Cavalletti and E. Milman,
\emph{The globalization theorem for the curvature dimension condition},
{\tt arXiv:1612.07623}.

\bibitem{cm16} F.  Cavalletti and A. Mondino,
\emph{Sharp and rigid isoperimetric inequalities in metric-measure spaces with lower Ricci curvature bounds},
Invent. Math. {\bf 208} (2017), no.~3, 803--849.

\bibitem{che99} J. Cheeger, \emph{Differentiability of Lipschitz functions on metric measure spaces}, Geom. Funct. Anal. {\bf 9} (1999), 428--517.

\bibitem{cheeger-d} J. Cheeger, \emph{Degeneration of Riemannian metric under ricci curvature bound}, ISBN 978-88-7642-304-8, Pisa (2001).

\bibitem{dg18} G. De Philippis and N. Gigli,
\emph{Non-collapsed spaces with Ricci curvature bounded from below}, Journal de l’\'Ecole polytechnique, {\bf 5} (2018), 613--650.

\bibitem{dpmr16} G. De Philippis, A. Marchese, and F. Rindler,
\emph{On a conjecture of Cheeger},  Measure Theory in Non-Smooth Spaces, pp.~145--155, De Gruyter
Press, edited by Nicola Gigli, {\tt ISBN:978-3-11-055083-2}.

\bibitem{dav89}  E. B. Davies, {Heat Kernels and Spectral Theory},
Vol.~92 of Cambridge Tracts in Mathematics, Cambridge University Press, Cambridge, (1989).

\bibitem{ding02} Y. Ding, \emph{Heat kernels and Green's functions on limit spaces},
Comm. Anal. Geom. {\bf 10} (2002), no.~3, 475--514.

\bibitem{eks15} M. Erbar, K. Kuwada, and K. Sturm,
\emph{On the equivalence of the entropic curvature-dimension condition and Bochner's inequality on metric measure spaces},
Invent. Math. {\bf 201} (2015), 993--1071.

\bibitem{gig13} N. Gigli, \emph{The splitting theorem in non-smooth context},
{\tt arXiv: 1302.5555}.

\bibitem{gig15} N. Gigli, \emph{On the differential structure of metric measure spaces and applications}, Mem. Amer. Math. Soc. {\bf 236} (2015), no.~1113.

\bibitem{gms15} N. Gigli, A. Mondino, and G. Savar\'e,
\emph{Convergence of pointed non-compact metric measure spaces and stability of Ricci curvature bounds and heat flows},
Proc. London Math. Soc. (3) {\bf 111} (2015), 1071--1129.


\bibitem{gp16} N. Gigli and E. Pasqualetto,
\emph{Behaviour of the reference measure on RCD spaces under charts},
{\tt arXiv:1607.05188}, to appear in Comm. Anal. Geom.


\bibitem{grs16} N. Gigli, T. Rajala, and K.-T. Sturm,
\emph{Optimal maps and exponentiation on finite dimensional spaces with Ricci
curvature bounded from below}, J. Geom. Anal. {\bf 26} (2016), 2914--2929.


\bibitem{haj96} P. Haj{\l}asz, \emph{Sobolev spaces on an arbitrary  metric space},
Potential Anal. {\bf 5} (1996), 403--415.

\bibitem{hk00} P. Haj{\l}asz and P. Koskela,
\emph{Sobolev met Poincar\'e}, Mem. Am. Math. Soc. {\bf 145} (2000), no.~688. 

\bibitem{hm96}  J. Heinonen and O. Martio,
\emph{The Sobolev capacity on metric spaces},
Ann. Acad. Sci. Fenn. Math. {\bf 21} (1996), 367--382.

\bibitem{hon11} S. Honda,  \emph{Ricci curvature and convergence of Lipschitz functions},
Comm. Anal. Geom. {\bf 19} (2011), no.~1, 79--158.

\bibitem{honda19+} S. Honda, \emph{New differential operator and non-collapsed RCD spaces}, available at https://arxiv.org/abs/1905.00123, to appear in Geom. Topo.

\bibitem{hkx13} B. Hua, M. Kell, and C. Xia,
\emph{Harmonic functions on metric measure spaces},
{\tt arXiv:1308.3607}.


\bibitem{ivr16} V. Ivrii, \emph{100 years of Weyl's law},
Bull. Math. Sci. {\bf 6}  (2016),  no.~3, 379--452.

\bibitem{jia15} R. Jiang, \emph{The Li-Yau inequality and heat kernels on metric measure spaces}, J. Math. Pures Appl. {\bf 104} (2015), no.~9, 29--57.

\bibitem{jlz16} R. Jiang, H. Li, and H. Zhang,
\emph{Heat kernel bounds on metric measure spaces and some spplications},
Potential Anal. {\bf 44} (2016), 601--627.

\bibitem{km16} M. Kell and A. Mondino,
\emph{On the volume measure of non-smooth spaces with Ricci curvature bounded below},
{\tt arXiv:1607.02036}.

\bibitem{kkm00} T. Kilpel\"ainen, J. Kinnunen, and O. Martio,
\emph{Sobolev spaces with zero borundary values on metric spaces},
Potential Anal. {\bf 12} (2000), 233--247.


\bibitem{ks03} K. Kuwae and T. Shioya,
\emph{Convergence of spectral structures: a functional analytic theory and its applications to spectral geometry}, Comm. Anal. Geom. {\bf 11} (2003), no.~4, 599--673.

\bibitem{lv09} J. Lott and C. Villani,
\emph{Ricci curvature for metric-measure spaces via optimal transport},
Ann. of Math. {\bf 169} (2009), 903--991 

\bibitem{lv07-jfa} J. Lott and C. Villani,
\emph{Weak curvature bounds and functional inequalities},
J. Funct. Anal. {\bf 245} (2007), no.~1, 311--333.

\bibitem{mil16} E. Milman, \emph{Spectral estimates, contractions and hypercontractivity},
	J. Spectr. Theory {\bf 8} (2018), no.~2, 669714.

\bibitem{mn14} A. Mondino and A. Naber,
\emph{Structure theory of metric measure spaces with lower Ricci curvature bounds},
J. Eur. Math. Soc. (JEMS) {\bf 21} (2019), 1809--1854.

\bibitem{mw16} A. Mondino and G. Wei,
\emph{On the universal cover and the fundamental group of an $RCD^*(K,N)$-space},
J. Reine Angew. Math., available at https://doi.org/10.1515/crelle-2016-0068.

\bibitem{mor08} S. Moroianu, \emph{Weyl laws on open manifolds},
Math. Ann. {\bf 340} (2008),  no.~1, 1--21.

\bibitem{mu07} W. M\"uller, \emph{Weyl's law for the cuspidal spectrum of $SL_n$},
Ann. of Math. (2) {\bf 165} (2007), no.~1, 275--333.



\bibitem{pet11}  A. Petrunin, \emph{Alexandrov meets Lott--Villani--Sturm},
M\"{u}nster J. of Math. {\bf 4} (2011), 53--64.


\bibitem{sy94} R. Schoen and S.-T. Yau,
{Lectures on Differential Geometry}, International Press, Boston, (1994).

\bibitem{shan00} N. Shanmugalingam,
\emph{Newtonian spaces: An extension of Sobolev spaces to metric measure spaces},
Rev. Mat. Iberoam. {\bf 16} (2000), 243--279. 

\bibitem{shan01} N. Shanmugalingam, \emph{Harmonic functions on metric spaces},
Illinois J.  Math. {\bf 45} 2001, no.~3, 1021--1050.


\bibitem{stu06-1} K. Sturm,  \emph{On the geometry of metric measure spaces. I},
Acta Math. {\bf 196} (2006), no.~1, 65--131, 

\bibitem{stu06-2} K. Sturm,  \emph{On the geometry of metric measure spaces. II},
Acta Math. {\bf 196} (2006), no.~1,  133--177.

\bibitem{stu95} K. Sturm, \emph{Analysis on local Dirichlet spaces. II. Upper Gaussian estimates for the fundamental solutions of parabolic equations},
Osaka J. Math. {\bf 32} (1995), no.~2, 275--312.


\bibitem{xu14} G. Y. Xu, \emph{Large time behavior of the heat kernel},
J. Differ. Geom. {\bf 98} (2014), 467--528.

\bibitem{zz10} H. C. Zhang and X. P.  Zhu,
\emph{Ricci curvature on Alexandrov spaces and rigidity theorems},
Comm. Anal. Geom. {\bf 18} (2010), no.~3, 503--554.

\bibitem{zz-local} H. C. Zhang and X. P. Zhu,
\emph{Local Li--Yau's estimates on $RCD^*(K,N)$ metric measure spaces},
Calc. Var. PDE (2016) {\bf 55}, 93.
{\tt DOI:10.1007/ s00526-016-1040-5}.
\end{thebibliography}
\end{document}